\documentclass{article}
\usepackage[a4paper,margin=0.81in,top=3cm,bottom=3.5cm,footskip=0.5cm]{geometry}
\usepackage[a4paper]{geometry}
\usepackage[utf8]{inputenc}
\usepackage{amsmath,amsfonts,amssymb,amsthm,bbm,mathrsfs}
\usepackage[shortlabels]{enumitem}
\usepackage{xcolor}
\usepackage{graphicx}
\usepackage{tikz}
\usepackage{float}
\usepackage{appendix}
\usepackage{setspace}
\usepackage{fancyhdr}
\usepackage{hyperref}
\hypersetup{
    colorlinks=true,
    linkcolor=black,
    filecolor=magenta,      
    urlcolor=black,
    citecolor=black,
    pdfpagemode=FullScreen,
}

\newtheorem{thm}{Theorem}[section]

\newtheorem{rmk}{Remark}[section]

\newtheorem{prop}{Proposition}[section]
\newtheorem{lm}{Lemma}[section]

\numberwithin{equation}{section}

\fancypagestyle{firstpage}{
\fancyhf{} 

\fancyfoot[L]{%
\footnotesize
\rule{2cm}{0.8pt}\\
This article is based upon work from the project PRIN2022 D53D23005580006 ``Elliptic and parabolic problems, heat kernel estimates and spectral theory''. The third author is member of the ``Gruppo Nazionale per l’Analisi Matematica, la Probabilità e le loro Applicazioni (GNAMPA)'' of the Istituto Nazionale di Alta Matematica (INdAM).\\[0.15cm]
$^\dagger$ Dipartimento di Matematica, Università degli Studi di Salerno,  Via Giovanni Paolo II, 132, 84084 Fisciano (SA), Italy.  \texttt{aelgrou@unisa.it}, \texttt{arhandi@unisa.it} \\
$^\ddagger$ Moulay Ismaïl University of Meknes, FST Errachidia, PAM Laboratory, BP 50, Boutalamine, Errachidia, Morocco. \texttt{omar.oukdach@gmail.com}
}
}

\title{\LARGE\textbf{A Hierarchical Robust Control Strategy for Stochastic Kuramoto--Sivashinsky--Korteweg--de Vries Equations}}

\author{Abdellatif Elgrou$^\dagger$,\,\, Omar Oukdach$^{\ddagger}$ \,\,and\,\, Abdelaziz Rhandi$^\dagger$}

\date{}

\begin{document}

\maketitle
\vspace{-3em}

\thispagestyle{firstpage}
\begin{center}
\end{center}

\begin{abstract}
We investigate the robust Stackelberg null controllability of a one-dimensional forward linear stochastic Kuramoto--Sivashinsky--Korteweg--de Vries (KS--KdV) equation. The control framework is formulated as a hierarchical Stackelberg game involving two leaders, one follower, and worst-case disturbances acting in both the drift and diffusion terms. The first leader acts to drive the system to rest, while the second leader is introduced to overcome analytical difficulties arising from the stochastic setting. The follower, by reducing the effect of the disturbances, addresses a tracking-type control problem aimed at keeping the system state and its first and second spatial derivatives close to prescribed target trajectories. First, the robust control problem is characterized by the existence of a saddle point. Then, the analysis is reduced to the null controllability of a strongly coupled forward--backward stochastic KS--KdV system. The problem is addressed by combining a duality technique with new Carleman estimates for forward and backward stochastic fourth-order parabolic equations.
\end{abstract}
\maketitle
\smallskip

\noindent\textbf{AMS Mathematics Subject Classification:} 93B05, 93B07, 93E20, 60H15.\\
\textbf{Keywords:} Robust control, Stackelberg strategy, Kuramoto--Sivashinsky equations, Korteweg–de Vries equations, Null controllability, Optimization problem, Carleman estimate.

\section{Introduction}
The Kuramoto--Sivashinsky--Korteweg--de Vries equation is a fourth-order nonlinear PDE of the form
\begin{equation}\label{model}
    y_t + k\, y_{xx} + y_{xxx} + \eta\, y_{xxxx} + y\,y_x = 0,
\end{equation}
which combines features of the Kuramoto--Sivashinsky (KS) and Korteweg--de Vries (KdV) equations to model diverse physical phenomena.  Throughout this paper, we refer to \eqref{model} as the KS–KdV system. It was first introduced by Benney \cite{benny66} and is therefore also known as the Benney equation. This equation has numerous applications in areas such as plasma physics and hydrodynamics; see, for instance, \cite{malfengkaw00133} and the references therein. The KS equation appears in flame front propagation, reaction--diffusion dynamics, and thin film evolution. It was introduced by Kuramoto \cite{kur78} and Kuramoto and Tsuzuki \cite{kur75tuz, kur76tuzu} to study phase turbulence, and later by Sivashinsky \cite{Ssivan4} in planar flame front stability analysis. The KdV equation, derived by Boussinesq \cite{bous77} and Korteweg--de Vries \cite{korVries895}, models the propagation of water waves in channels. 

In \eqref{model}, the function $y = y(t,x)$ represents the state variable. The fourth-order term $\eta\,y_{xxxx}$, with $\eta>0$, provides stabilizing dissipation, while the third-order term $y_{xxx}$ accounts for dispersion. In contrast, the second-order term $k\,y_{xx}$, with $k>0$, introduces anti-diffusion and may destabilize the dynamics. The nonlinear convection term $y\,y_x$ further enriches the system, giving rise to phenomena such as pattern formation and chaotic evolution. The interplay of destabilization, higher-order dissipation, dispersion, and nonlinearity endows the  KS--KdV system with a rich mathematical structure, making it a paradigmatic model in the study and control of nonlinear partial differential equations; see, e.g., \cite{KS23e,KS2,KS3}.

In many realistic situations, the system dynamics are influenced by random effects. This motivates the study of stochastic versions of the KS--KdV equation, where random perturbations model environmental noise or uncertainties in physical parameters. Such stochastic KS--KdV models arise, for instance, in dispersive-dissipative systems subject to random fluctuations. For further discussions on stochastic KS and KdV equations, we refer the reader to \cite{duanervin,Gaochenli15,Liangwu22,laucuma96,hevicto1} and the references therein.

Control problems with multiple objectives naturally lead to solution concepts based on strategic interactions. Classical notions in this context include Pareto optimality, Nash equilibria, and Stackelberg strategies; see \cite{StDf,Na51,Pa96,St34} and references therein. In particular, Stackelberg strategies describe hierarchical decision processes in which a leader makes a decision while anticipating the optimal response of a follower.

Another central aspect in control theory is robustness with respect to uncertainties and external disturbances. Robust control, initially developed for finite-dimensional systems in the late 1970s and early 1980s, seeks control strategies that maintain satisfactory performance despite modeling errors or external perturbations. For distributed parameter systems, such problems are often formulated as noncooperative games between the controller and the disturbance, leading naturally to a worst-case analysis where the disturbance opposes the control. The objective is then to determine a saddle point of the associated cost functional; see \cite{bewl12,bewl12se,bewl1} for further discussions on robust and optimal control of PDEs.

Motivated by these considerations, it is natural to investigate robust hierarchical control strategies for stochastic distributed systems. In this work, we study, for the first time, a robust Stackelberg controllability problem for a stochastic  KS--KdV equation. The proposed framework integrates a hierarchical decision mechanism with robustness against external disturbances.

Now, we introduce some necessary notation. Let $T>0$ and denote $Q := (0,T) \times (0,1)$. Let $\mathcal{O}$ and $\mathcal{D}$ be nonempty open subsets of $(0,1)$ representing the control regions, with $\mathcal{O} \cap \mathcal{D} = \emptyset$. For any subset $\mathcal{E}\subset(0,1)$, we denote by $\chi_{\mathcal{E}}$ its characteristic function. Finally, for a Hilbert space $\mathcal{H}$, we denote by $\|\cdot\|_{\mathcal{H}}$ and $\langle\cdot,\cdot\rangle_{\mathcal{H}}$ its norm and inner product, respectively. Let $(\Omega,\mathcal{F},\{\mathcal{F}_t\}_{t\in[0,T]},\mathbb{P})$ be a complete filtered probability space on which a one-dimensional standard Brownian motion $W(\cdot)$ is defined. The filtration $\{\mathcal{F}_t\}_{t\in[0,T]}$ is assumed to be the natural filtration generated by $W(\cdot)$ and augmented by all $\mathbb{P}$-null sets in $\mathcal{F}$. For a Banach space $\mathcal{X}$, we introduce the following spaces: $C([0,T];\mathcal{X})$ denotes the Banach space of all $\mathcal{X}$-valued continuous functions defined on $[0,T]$; $L^2_{\mathcal{F}_t}(\Omega;\mathcal{X})$ denotes the space of all $\mathcal{X}$-valued, $\mathcal{F}_t$-measurable random variables $X$ such that $\mathbb{E}\big(\|X\|_{\mathcal{X}}^2\big)<\infty$; $L^2_{\mathcal{F}}(0,T;\mathcal{X})$ denotes the space of all $\mathcal{X}$-valued, $\{\mathcal{F}_t\}_{t\in[0,T]}$-adapted processes $X(\cdot)$ satisfying $\mathbb{E}\int_0^T \|X(t)\|_{\mathcal{X}}^2\,dt < \infty$; $L^\infty_{\mathcal{F}}(0,T;\mathcal{X})$ denotes the space of all essentially bounded and adapted $\mathcal{X}$-valued processes, endowed with the norm $\|\cdot\|_{\infty}$; and $L^2_{\mathcal{F}}(\Omega;C([0,T];\mathcal{X}))$ denotes the space of all $\mathcal{X}$-valued, adapted continuous processes $X(\cdot)$ such that $\mathbb{E}\!\left[\max_{t\in[0,T]} \|X(t)\|_{\mathcal{X}}^2\right] < \infty$.

Consider the following linear stochastic KS--KdV equation:
\begin{equation}\label{eqq1.1}
\begin{cases}
\begin{array}{lll}
dy +(k\,y_{xx}+  y_{xxx}+\eta\,y_{xxxx})\,dt
= \left[ay+ f\chi_{\mathcal{O}}
+ v\chi_{\mathcal{D}}+\psi_1\right] dt+ \left[by+g+\psi_2\right]\,dW(t),
& (t,x)\in Q,\\[2mm]
y(t,0)=y(t,1)=0, & t\in(0,T),\\
y_x(t,0)=y_x(t,1)=0, & t\in(0,T),\\
y(0,x)=y_0(x), & x\in(0,1),
\end{array}
\end{cases}
\end{equation}
where $y_0 \in L^2_{\mathcal{F}_0}(\Omega;L^2(0,1))$ denotes the initial state, and 
$\psi_1,\psi_2 \in L^2_{\mathcal{F}}(0,T;L^2(0,1))$ represent unknown disturbances acting, respectively, on the drift and diffusion terms, and $k, \eta \in \mathbb{R}$ are positive constants. The pair $(f,g) \in L^2_{\mathcal{F}}(0,T;L^2(\mathcal{O})) \times L^2_{\mathcal{F}}(0,T;L^2(0,1))$ corresponds to the leader controls, while $v \in L^2_{\mathcal{F}}(0,T;L^2(\mathcal{D}))$ corresponds to the follower control. The zero-order coefficients $a,b \in L^\infty_{\mathcal{F}}(0,T;L^\infty(0,1))$. The stochastic perturbation driven by the Wiener process $W(t)$ accounts for random fluctuations or uncertainties, while the control inputs $f$, $g$, and $v$ act on the subdomains $\mathcal{O}$ and $\mathcal{D}$ to influence the system behavior.

The system \eqref{eqq1.1} involves three controls: the pair $(f,g)$, referred to as the \emph{leaders}, and $v$, called the \emph{follower}. The control objectives are hierarchical in nature. The primary objective is of controllability type, whereas the secondary objective corresponds to a robust optimal tracking problem. More precisely, the first leader 
$f \in L^2_{\mathcal{F}}(0,T;L^2(\mathcal{O}))$ acts on the subregion $\mathcal{O}$ and aims to drive the system to rest at time $T$. The second leader 
$g \in L^2_{\mathcal{F}}(0,T;L^2(0,1))$ acts on the whole domain and is introduced to overcome certain technical difficulties caused by the stochastic perturbation. The follower 
$v \in L^2_{\mathcal{F}}(0,T;L^2(\mathcal{D}))$ acts on the subregion $\mathcal{D}$ and aims to keep the state $y$, together with its first and second spatial derivatives $y_x$ and $y_{xx}$, close to prescribed targets, while compensating for the disturbance generated by the source terms $\psi_1$ and $\psi_2$. The disturbance associated with $\psi_2$ in the diffusion term introduces additional difficulties, requiring a new Carleman estimate (see Remark \ref{rmk1.1.} for further details).

Similarly to \cite{Gaochenli15}, one can show that the system \eqref{eqq1.1} is well-posed. More precisely, for any $y_0 \in L^2_{\mathcal{F}_0}(\Omega;L^2(0,1))$, $(f,g) \in L^2_{\mathcal{F}}(0,T;L^2(\mathcal{O})) \times L^2_{\mathcal{F}}(0,T;L^2(0,1))$, $v \in L^2_{\mathcal{F}}(0,T;L^2(\mathcal{D}))$, and  $\psi_1,\psi_2 \in L^2_{\mathcal{F}}(0,T;L^2(0,1))$, there exists a unique weak solution 
\[
y \in L^2_{\mathcal{F}}(\Omega;C([0,T];L^2(0,1))) \bigcap L^2_{\mathcal{F}}(0,T;H^2_0(0,1)).
\]
Moreover, there exists a constant $C>0$, independent of the data, such that
\begin{align*}
&\|y\|_{L^2_{\mathcal{F}}(\Omega;C([0,T];L^2(0,1)))} 
+ \|y\|_{L^2_{\mathcal{F}}(0,T;H^2_0(0,1))} \\
&\quad \le C \Big(
\|y_0\|_{L^2_{\mathcal{F}_0}(\Omega;L^2(0,1))} 
+ \|f\|_{L^2_{\mathcal{F}}(0,T;L^2(\mathcal{O}))} 
+ \|g\|_{L^2_{\mathcal{F}}(0,T;L^2(0,1))} 
\\
&\hspace{1.4cm}+ \|v\|_{L^2_{\mathcal{F}}(0,T;L^2(\mathcal{D}))} 
+ \|\psi_1\|_{L^2_{\mathcal{F}}(0,T;L^2(0,1))}+ \|\psi_2\|_{L^2_{\mathcal{F}}(0,T;L^2(0,1))} 
\Big).
\end{align*}

We now formulate the robust Stackelberg null controllability problem for \eqref{eqq1.1}. Let 
\(y_d^i \in L^2_{\mathcal{F}}(0,T;L^2(\mathcal{O}_d^i))\), \(i=0,1,2\), denote target functions defined respectively on the observation regions \(\mathcal{O}_d^i \subset (0,1)\), which are assumed to be nonempty and open sets. For the follower \(v\) and the disturbances \(\psi_1, \psi_2\), we introduce the secondary (robust) cost functional:
\begin{align}\label{robustcost}
\begin{aligned}
\mathcal{J}_r(f,g;\psi_1,\psi_2,v) 
&:= \frac{1}{2}\,\mathbb{E}\iint_Q \Big(\big|y-y_d^0\big|^2\chi_{\mathcal{O}_d^0} + \big|y_x-y_d^1\big|^2\chi_{\mathcal{O}_d^1}+ \big|y_{xx}-y_d^2\big|^2\chi_{\mathcal{O}_d^2}\Big) \,dx\, dt \\
&\hspace{0.6cm}+ \frac{\beta}{2}\,\mathbb{E}\int_0^T\int_{\mathcal{D}} |v|^2 \,dx\, dt
     - \frac{\delta_1}{2}\,\mathbb{E}\iint_Q |\psi_1|^2 \,dx\, dt
   - \frac{\delta_2}{2}\,\mathbb{E}\iint_Q |\psi_2|^2 \,dx\, dt,
   \end{aligned}
\end{align}
where $\beta,\delta_1,\delta_2 > 0$ are constants, and $y = y(y_0,f,g,\psi_1,\psi_2,v)$ denotes the solution of \eqref{eqq1.1} corresponding to the given controls and disturbances.

The primary cost functional associated with the leader control pair $(f,g)$ is defined by
\begin{equation}\label{leaderscost}
\mathcal{J}(f,g) := \frac{1}{2}\,\mathbb{E}\iint_Q \big(|f|^2\chi_{\mathcal{O}} +|g|^2\big) \,dx\, dt.
\end{equation}

The robust Stackelberg strategy is formulated in a hierarchical manner: for a fixed pair of leaders $(f,g) \in L^2_{\mathcal{F}}(0,T;L^2(\mathcal{O})) \times L^2_{\mathcal{F}}(0,T;L^2(0,1))$, the first step is to determine a saddle point $(\psi_1^\star,\psi_2^\star,v^\star) \in \mathcal{H}^{\psi} \times \mathcal{H}^v$ satisfying
\begin{equation}\label{miniprobofjtil}
\mathcal{J}_r(f,g;\psi_1^\star,\psi_2^\star,v^\star)
= \max_{(\psi_1,\psi_2) \in \mathcal{H}^{\psi}} \min_{v \in \mathcal{H}^v} \mathcal{J}_r(f,g;\psi_1,\psi_2,v)
= \min_{v \in \mathcal{H}^v} \max_{(\psi_1,\psi_2) \in \mathcal{H}^{\psi}} \mathcal{J}_r(f,g;\psi_1,\psi_2,v),
\end{equation}
where the admissible sets for the disturbances and the follower are given by
\[
\mathcal{H}^{\psi_1}=\mathcal{H}^{\psi_2} := L^2_{\mathcal{F}}(0,T;L^2(0,1)), 
\qquad \mathcal{H}^{\psi}:=\mathcal{H}^{\psi_1}\times\mathcal{H}^{\psi_2},\qquad
\mathcal{H}^v := L^2_{\mathcal{F}}(0,T;L^2(\mathcal{D})).
\]

Once the saddle point $(\psi_1^\star,\psi_2^\star,v^\star)$ has been determined for each fixed pair of leaders $(f,g)$, the next step is to select the optimal leaders $(\widehat{f},\widehat{g}) \in L^2_{\mathcal{F}}(0,T;L^2(\mathcal{O})) \times L^2_{\mathcal{F}}(0,T;L^2(0,1))$ that minimize the primary cost functional $\mathcal{J}$ while accounting for the follower's optimal response. That is,
\[ \mathcal{J}(\widehat{f},\widehat{g}) = \min_{(f,g)\in \mathcal{H}^{f,g}} \mathcal{J}(f,g), \]
where
\[
\mathcal{H}^{f,g} := L^2_{\mathcal{F}}(0,T;L^2(\mathcal{O})) \times L^2_{\mathcal{F}}(0,T;L^2(0,1)),
\]
and the corresponding solution $y = y(y_0,\widehat{f},\widehat{g},\psi_1^\star,\psi_2^\star,v^\star)$ of \eqref{eqq1.1} satisfies the null controllability condition
\begin{equation}\label{ncontrol}
y(T,\cdot) = 0 \quad \text{in } (0,1), \quad \text{a.s.}
\end{equation}

The objective of the robust control problem \eqref{miniprobofjtil} is to determine a follower control $v^\star$ that steers the system state $y$, together with its spatial derivatives $y_x$ and $y_{xx}$, close to the prescribed target trajectories $y^0_d$, $y^1_d$, and $y^2_d$, respectively, while accounting for unknown disturbances $\psi_1$ and $\psi_2$. The disturbances act adversarially, attempting to maximize the deviation from the desired targets. Consequently, the follower minimizes the cost functional with respect to $v$, whereas the disturbances maximize it with respect to $(\psi_1,\psi_2)$. Mathematically, this gives rise to a min--max optimization problem, whose solution is characterized by the saddle point $(\psi_1^\star,\psi_2^\star,v^\star)$ of the cost functional $\mathcal{J}_r$. The triple $(\psi_1^\star,\psi_2^\star,v^\star)$ represents the equilibrium between the optimal follower control and the worst-case disturbances within the robust Stackelberg framework.

The analysis of our hierarchical robust control problem is reduced to the null controllability of a strongly coupled forward--backward stochastic KS--KdV system. This problem is addressed by means of suitable Carleman estimates for fourth-order stochastic parabolic equations, combined with duality arguments. To solve our problem, we assume the following geometric condition:

\begin{equation}\label{Assump10}
\mathcal{O} \cap \mathcal{O}_d^0 \neq \emptyset,\quad\textnormal{and}\quad\mathcal{O} \cap \mathcal{O}_d^0\nsubseteq(\mathcal{O}_d^1\cup\mathcal{O}_d^2).
\end{equation}

We now state the main result of this paper.
\begin{thm}[Robust Stackelberg strategy]\label{th4.1SN}
Assume that condition \eqref{Assump10} holds, and that $\beta,\delta_1,\delta_2>0$ are sufficiently large. Then, there exists a positive weight function $\rho=\rho(t)$ blowing up as $t \to T$ such that, for any initial condition $y_0\in L^2_{\mathcal{F}_0}(\Omega;L^2(0,1))$ and target functions $y_d^i\in L^2_{\mathcal{F}}(0,T;L^2(\mathcal{O}_d^i))$ ($i=0,1,2$) satisfying
\begin{equation}\label{inqAss11SN}
\mathbb{E}\iint_Q \rho^2 \Big(\big|y_d^0\big|^2\chi_{\mathcal{O}_d^0}+\big|y_d^1\big|^2\chi_{\mathcal{O}_d^1}+\big|y_d^2\big|^2\chi_{\mathcal{O}_d^2}\Big) \,dx\,dt < \infty,
\end{equation}
there exist controls $(\widehat{f},\widehat{g})\in \mathcal{H}^{f,g}$ minimizing $\mathcal{J}$ and a unique saddle point $(\psi_1^\star,\psi_2^\star,v^\star) \in \mathcal{H}^{\psi}\times\mathcal{H}^{v}$  for the functional defined in \eqref{robustcost}, such that the corresponding solution $\widehat{y}= \widehat{y}(y_0,\widehat{f},\widehat{g},\psi_1^\star,\psi_2^\star,v^\star)$ of \eqref{eqq1.1} satisfies
\[
\widehat{y}(T,\cdot)=0 \quad \textnormal{in } (0,1), \quad \textnormal{a.s.}
\]
Moreover, the controls $(\widehat{f},\widehat{g})$ satisfy the estimate
\begin{align*}
&\|\widehat{f}\|^2_{L^2_{\mathcal{F}}(0,T;L^2(\mathcal{O}))} 
+ \|\widehat{g}\|^2_{L^2_{\mathcal{F}}(0,T;L^2(0,1))} \\
&\;\le C_T \bigg[ \mathbb{E}\|y_0\|^2_{L^2(0,1)} + \mathbb{E}\iint_Q \rho^2 \Big(\big|y_d^0\big|^2\chi_{\mathcal{O}_d^0}+\big|y_d^1\big|^2\chi_{\mathcal{O}_d^1}+\big|y_d^2\big|^2\chi_{\mathcal{O}_d^2}\Big) \,dx\,dt  \bigg],
\end{align*}
where $C_T>0$ is a constant depending on $\mathcal{O}$, $\mathcal{D}$, $\mathcal{O}_d^0$, $\mathcal{O}_d^1$, $\mathcal{O}_d^2$, $T$, $\|a\|_\infty$, $\|b\|_\infty$.
\end{thm}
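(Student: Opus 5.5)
The proof splits into three steps: characterize the saddle point for fixed leaders, reduce to a controllability problem for an optimality system, and solve that problem by duality using a Carleman-type observability inequality.

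\textbf{Step 1: saddle point for fixed leaders.} For fixed $(f,g)\in\mathcal{H}^{f,g}$, the state depends affinely on $(\psi_1,\psi_2,v)$, so $\mathcal{J}_r$ is a quadratic functional.
\begin{itemize}
\item It is strictly convex and coercive in $v$, since $\beta>0$.
\item For $\delta_1,\delta_2$ large, it is strictly concave in $(\psi_1,\psi_2)$. The quadratic part coming from the tracking terms is bounded by $C\,\mathbb{E}\iint_Q(|\psi_1|^2+|\psi_2|^2)$, using the $L^2_{\mathcal F}(0,T;H^2_0)$ well-posedness estimate stated above. Hence $-\tfrac{\delta_i}{2}\|\psi_i\|^2$ dominates once $\delta_i>C$.
\item Because the functional is convex--concave, coercive and anticoercive, and continuous on Hilbert spaces, the classical saddle-point theorem (Ekeland--Temam) gives existence of $(\psi_1^\star,\psi_2^\star,v^\star)$; uniqueness follows from strictness.
\end{itemize}

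To characterize the saddle point, I compute Gateaux derivatives. I introduce the backward adjoint stochastic KS--KdV equation
\begin{equation*}
dz - (k z_{xx} - z_{xxx} + \eta z_{xxxx})dt = -\big[a z + b Z + (y-y_d^0)\chi_{\mathcal{O}_d^0} - ((y_x-y_d^1)\chi_{\mathcal{O}_d^1})_x + ((y_{xx}-y_d^2)\chi_{\mathcal{O}_d^2})_{xx}\big]dt + Z\,dW,
\end{equation*}
with $z(T)=0$ and clamped boundary conditions. The optimality conditions then read $v^\star=-\tfrac1\beta z\chi_{\mathcal D}$, $\psi_1^\star=\tfrac1{\delta_1}z$ and $\psi_2^\star=\tfrac1{\delta_2}Z$. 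The source in $H^{-2}$ makes $(z,Z)$ a transposition or weak solution; I handle this as in the well-posedness result.

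\textbf{Step 2: reduction to a coupled system.} The null controllability of $\widehat y$ becomes null controllability of the coupled forward--backward system. In that system $y$ is driven by $f\chi_{\mathcal O}$, $g$, $-\beta^{-1}z\chi_{\mathcal D}$, $\delta_1^{-1}z$ and $\delta_2^{-1}Z$, and $z$ is driven by the tracking terms. By duality, this reduces to an observability inequality for the adjoint system. That adjoint system consists of a backward equation for $p$ (adjoint of $y$) coupled with a forward equation for $q$ (adjoint of $z$), whose source is $-\beta^{-1}p\chi_{\mathcal D}+\delta_1^{-1}p$ plus the diffusion coupling $\delta_2^{-1}P$. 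The inequality to prove is
\begin{equation*}
\mathbb{E}\|p(0)\|^2_{L^2(0,1)}+\mathbb{E}\iint_Q \rho^{-2}\big(|q|^2\chi_{\mathcal O_d^0}+|q_x|^2\chi_{\mathcal O_d^1}+|q_{xx}|^2\chi_{\mathcal O_d^2}\big)\le C\,\mathbb{E}\Big(\int_0^T\!\!\int_{\mathcal O}|p|^2+\iint_Q|P|^2\Big).
\end{equation*}
Here $P$ is the diffusion component of the backward equation for $p$; the second leader $g$ is exactly what allows the global term $\iint_Q|P|^2$ on the right-hand side.

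\textbf{Step 3: Carleman estimates and the main obstacle.} I apply the paper's new Carleman estimates: the backward fourth-order one to $p$ and the forward one to $q$, with the same weight $e^{s\alpha}\xi^\mu$ and observation in a subset $\omega\Subset\mathcal O\cap\mathcal O_d^0\setminus(\overline{\mathcal O_d^1\cup\mathcal O_d^2})$, which is nonempty by \eqref{Assump10}. Adding the two estimates, the coupling terms in $p$ and $P$ are absorbed for $s$ large. The local term in $q$ on $\omega$ is then removed using the equation for $p$: on $\omega$ the source of the $p$-equation is exactly $q$, because the $q_x,q_{xx}$ couplings vanish there. Testing that equation against $\theta q$ with a cutoff $\theta$ supported in $\omega$, applying Itô's formula, and integrating by parts four times transfers the derivatives onto $p$ locally, with weighted $L^2$ norms of $p$ on $\mathcal O$ plus $\varepsilon$ times the global terms.

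This local absorption is the main obstacle. It requires high powers of $s\xi$ and control of the diffusion cross term $\mathbb{E}\iint\theta P\,Q_q$, where $Q_q$ is the diffusion of $q$ and equals $\delta_2^{-1}P$ up to lower-order terms. This is why $\delta_2$ large and the global observation of $P$ are needed.

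\textbf{Conclusion.} Energy estimates for $p$ on $(0,T/2)$ yield $\mathbb{E}\|p(0)\|^2$, and the weights are set to $\rho^{-2}\sim e^{2s\alpha}\xi^{-m}$ with $\alpha$ modified to stay bounded near $t=0$. A penalized minimization (HUM) then produces $(\widehat f,\widehat g)$ with the stated bound, and $\widehat y(T)=0$ a.s. Minimality of $\mathcal J$ follows by taking the minimal-norm control, since the admissible set is closed, convex and nonempty.
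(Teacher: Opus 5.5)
Your architecture matches the paper's: Ekeland--Temam for the saddle point, the adjoint characterization of $(\psi_1^\star,\psi_2^\star,v^\star)$, reduction to an observability inequality for \eqref{ADJSO1}, the new forward/backward Carleman estimates, It\^o's formula on a cut-off product $pq$ where the $p$-source is exactly $q$, energy estimates near $t=0$, and penalized HUM. However, three steps of the observability argument do not go through as written.

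First, when combining the two Carleman estimates you only discuss the couplings from $p,P$ into the $q$-equation. The critical couplings go the other way. Theorem~\ref{thmm3.2p} charges the term $q_{xxxx}\chi_{\mathcal{O}_d^2}=(q_{xx}\chi_{\mathcal{O}_d^2})_{xx}$ of the $p$-equation as $\lambda^4\mathbb{E}\iint_Q\theta^2\gamma^4|q_{xx}|^2\chi_{\mathcal{O}_d^2}$. But \eqref{carfor5.6} applied to $q$ only has $\lambda^3\theta^2\gamma^3|q_{xx}|^2$ on the left. Since $\gamma$ is unbounded, no choice of $\lambda$ repairs this mismatch in the power of $\gamma$. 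The paper applies the shifted estimate \eqref{carfor5.6sec} to $q$ with $d=9$. Then $\mathcal{I}(9,q)$ absorbs every $q$-coupling, and the price paid in the other direction, $\lambda^2\mathbb{E}\iint_Q\theta^2\gamma^2|p|^2$, is absorbed by $\mathcal{I}(7,p)$. Moreover, the $P$-terms are not absorbed at all, since no left-hand side controls $P$. They remain as $\lambda^9\mathbb{E}\iint_Q\theta^2\gamma^9|P|^2$, as your target inequality correctly anticipates.

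Second, the localization does not leave only weighted $L^2$ norms of $p$ on $\mathcal{O}$. The fourth-order commutator $w\zeta\,(q\,p_{xxxx}-p\,q_{xxxx})$ has total order three. Moving all derivatives onto $q$ produces $(w\zeta)_x\,q_{xxx}\,p$, which $\mathcal{I}(9,q)$ does not control. Hence high-weight local norms of $p_x$ (and, with the paper's bookkeeping, of $p_{xx}$) remain. The paper removes them in a separate step:
\begin{itemize}
\item $p_{xx}$ via a localized energy identity (It\^o's formula for $w_{13}\zeta_2|p|^2$), using that $-w_{13}\zeta_2|P|^2$ has a favorable sign and that the $q_{xx}$, $q_{xxxx}$ couplings vanish on $\mathcal{B}_1$;
\item $p_x$ by integration by parts against the global term $\lambda^3\theta^2\gamma^3|p_{xx}|^2$.
\end{itemize}
This is what produces the power $\lambda^{47}$ and the nested sets $\mathcal{B}_3\Subset\mathcal{B}_2\Subset\mathcal{B}_1$.

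Third, the Carleman weights vanish at $t=0$. So the energy estimate for $p$ on $(0,T/2)$ requires $q$ on $(0,3T/4)$, which is itself driven by $p$. The paper closes this forward--backward loop with \eqref{Ineq2}, whose constant $C(\beta^{-2}+\delta_1^{-2})$ is made small by taking $\beta$ and $\delta_1$ large. Your sketch never uses the largeness of $\beta$, so as written the step near $t=0$ does not close. Relatedly, the cross term $\delta_2^{-1}\mathbb{E}\iint_Q w_9\zeta_1|P|^2$ needs no largeness of $\delta_2$, since it is bounded by the global $P$-term; largeness of $\delta_1,\delta_2$ is needed only for the saddle point.

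Finally, like the paper, you tacitly need $\mathcal{O}\cap\mathcal{O}_d^0\nsubseteq\overline{\mathcal{O}_d^1\cup\mathcal{O}_d^2}$ to obtain a nonempty open $\omega$. This is slightly stronger than \eqref{Assump10} as stated.
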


As a consequence, we also obtain the following Stackelberg strategy in the absence of disturbances, that is, when $\psi_1=\psi_2=0$. In this case, the follower cost functional reduces to
\begin{align}\label{follfonly}
\widetilde{\mathcal{J}}(f,g;v) = \frac{1}{2}\,\mathbb{E}\iint_Q \Big(\big|y-y_d^0\big|^2\chi_{\mathcal{O}_d^0} + \big|y_x-y_d^1\big|^2\chi_{\mathcal{O}_d^1}+ \big|y_{xx}-y_d^2\big|^2\chi_{\mathcal{O}_d^2}\Big) \,dx\, dt + \frac{\beta}{2} \mathbb{E}\int_0^T\int_{\mathcal{D}} |v|^2 \,dx\, dt.
\end{align}

\begin{prop}[Stackelberg strategy]\label{th4.2SN}
Assume that  \eqref{Assump10} holds, and that $\beta>0$ is sufficiently large. Then, there exists a positive weight function $\rho=\rho(t)$ blowing up as $t \to T$ such that, for any initial condition $y_0 \in L^2_{\mathcal{F}_0}(\Omega;L^2(0,1))$ and any target functions $y_d^i\in L^2_{\mathcal{F}}(0,T;L^2(\mathcal{O}_d^i))$ ($i=0,1,2$) satisfying
\begin{equation*}
\mathbb{E}\iint_Q \rho^2 \Big(\big|y_d^0\big|^2\chi_{\mathcal{O}_d^0}+\big|y_d^1\big|^2\chi_{\mathcal{O}_d^1}+\big|y_d^2\big|^2\chi_{\mathcal{O}_d^2}\Big) \,dx\,dt < \infty,
\end{equation*}
there exist controls $(\widehat{f},\widehat{g}) \in \mathcal{H}^{f,g}$ minimizing $\mathcal{J}$ and a unique follower control $v^\star \in \mathcal{H}^v$ minimizing \eqref{follfonly} such that the corresponding solution $\widehat{y} = \widehat{y}(y_0, \widehat{f}, \widehat{g}, v^\star)$ of \eqref{eqq1.1} with $\psi_1=\psi_2=0$ satisfies
\[
\widehat{y}(T,\cdot) = 0 \quad \textnormal{in } (0,1), \quad \textnormal{a.s.}
\]
Moreover, the estimate for the leaders' controls $(\widehat{f},\widehat{g})$ is similar to that in Theorem \ref{th4.1SN}.
\end{prop}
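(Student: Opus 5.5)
The plan is to deduce Proposition \ref{th4.2SN} from Theorem \ref{th4.1SN} by specialization rather than redo the analysis. With $\psi_1=\psi_2=0$, the functional \eqref{follfonly} is $\mathcal{J}_r(f,g;0,0,v)$. The natural route, however, is to rerun the argument of Theorem \ref{th4.1SN} with the disturbance variables removed. I expect every step to simplify.

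\textbf{Step 1: the follower's problem.} For fixed $(f,g)\in\mathcal{H}^{f,g}$, the map $v\mapsto y$ is affine and continuous from $\mathcal{H}^v$ into $L^2_{\mathcal{F}}(0,T;H^2_0(0,1))$ by the well-posedness estimate. Hence $\widetilde{\mathcal{J}}(f,g;\cdot)$ is continuous, strictly convex (because $\beta>0$) and coercive on $\mathcal{H}^v$, so it has a unique minimizer $v^\star$.

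\textbf{Step 2: optimality system.} Compute the Gâteaux derivative of $\widetilde{\mathcal{J}}$ and introduce the adjoint backward stochastic KS--KdV equation
\[
-dz+(k z_{xx}-z_{xxx}+\eta z_{xxxx})\,dt=\big[az+bZ+y\chi_{\mathcal{O}_d^0}-(y_x\chi_{\mathcal{O}_d^1})_x+(y_{xx}\chi_{\mathcal{O}_d^2})_{xx}-\text{targets}\big]dt+Z\,dW,
\]
with $z(T)=0$ and clamped boundary conditions. Here the target terms are $y_d^0\chi_{\mathcal{O}_d^0}-(y_d^1\chi_{\mathcal{O}_d^1})_x+(y_d^2\chi_{\mathcal{O}_d^2})_{xx}$, understood in $H^{-2}$. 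This yields the characterization $v^\star=-\frac1\beta z\chi_{\mathcal{D}}$. The problem then reduces to null controllability, via $(f,g)$, of the coupled forward--backward system in $(y,z,Z)$.

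\textbf{Step 3: controllability and the leaders.} Duality reduces null controllability to an observability inequality for the adjoint coupled system $(\phi,\gamma)$. That inequality should bound $\mathbb{E}\|\phi(0)\|^2$ plus weighted $L^2$ norms of $\gamma$ by $\mathbb{E}\int_0^T\int_{\mathcal{O}}|\phi|^2$ plus the $L^2$ norm of the diffusion component of $\phi$; the latter is absorbed by the control $g$. I would prove it as in Theorem \ref{th4.1SN}, using the Carleman estimates for the forward and backward fourth-order stochastic equations. The coupling terms $\chi_{\mathcal{O}_d^i}\partial_x^i$ are handled using $\beta$ large, and the local term is transferred through condition \eqref{Assump10}.

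Once the observability inequality is available, I would minimize a penalized functional, or equivalently use a Lax--Milgram argument in the weighted space defining $\rho$. This produces $(\widehat f,\widehat g)$ with $\widehat y(T)=0$ and the stated bound, under hypothesis \eqref{inqAss11SN}. Minimality of $\mathcal{J}$ follows because the set of admissible leader pairs is closed, convex and nonempty, and $\mathcal{J}$ is strictly convex and coercive.

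\textbf{Main obstacle.} The hardest step is the observability inequality for the coupled system, in particular absorbing the second-derivative coupling terms. Since the proof of Theorem \ref{th4.1SN} already handles this with the extra $\psi$-terms (which only require $\delta_1,\delta_2$ large), I expect it to carry over verbatim with those terms dropped.
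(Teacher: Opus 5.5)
Your proposal is correct and follows the paper's route. The paper gives no separate proof: it obtains this result by rerunning the argument for Theorem \ref{th4.1SN} with the disturbance terms deleted (formally $1/\delta_1=1/\delta_2=0$), so that strict convexity replaces Lemma \ref{lem:saddle_point}, $v^\star=-\frac{1}{\beta}z\chi_{\mathcal{D}}$, and Lemma \ref{thmm5.1}, Proposition \ref{Pro4.2} and the penalization in Proposition \ref{Lm5.5} carry over unchanged; the $P$-term, hence the need for the leader $g$, still comes from the backward Carleman estimate for $p$.

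Two small corrections. First, with $+Z\,dW$ on the right of your $-dz$ equation, the bracket must contain $-bZ$, not $+bZ$, for the Itô cross term to cancel. Second, in the paper's argument the coupling terms in the $p$-equation are absorbed through the weight gap between $\mathcal{I}(7,p)$ and $\mathcal{I}(9,q)$ with $\lambda$ large, not through $\beta$; $\beta$ large is used only in Step 1 of Proposition \ref{Pro4.2}, to absorb the unweighted energy of $q$ on $(0,3T/4)$, where the Carleman weight degenerates.
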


Hierarchical control problems constitute a central topic in optimal control theory. 
They were first introduced by J.-L. Lions in \cite{LiPa} and have since generated a vast literature. 
Given this extensive body of work, particularly in the deterministic setting, we only mention a few representative contributions. 
Stackelberg control for parabolic equations in unbounded domains was studied in \cite{unbounded}, and the interaction between Nash and Stackelberg strategies has been widely investigated. 
For instance, Nash equilibria in multi-objective control of linear partial differential equations were analyzed in \cite{GRP02}. 
Stackelberg--Nash strategies for parabolic equations have been extensively studied, including exact controllability of linear equations in \cite{ArFeGu17}, controllability for linear and semilinear equations in \cite{ArCaSa15}, and further developments for parabolic systems in \cite{Calsavara,HSP18}. 
Additional contributions include Stackelberg--Nash null controllability of the heat equation with dynamic boundary conditions in \cite{BoMaOuNash,BoMaOuNash2}.

For deterministic KS and KdV equations, we refer to \cite{KSStack2,StackKS} for hierarchical control problems, where Stackelberg--Nash controllability results are obtained with distributed and boundary controls, and to \cite{albarasan23} for the KdV equation. 
Regarding robust Stackelberg controllability, we mention \cite{refee11} for linear and semilinear heat equations, \cite{refee12} for the Navier--Stokes equations, \cite{refee13} for heat equations with boundary controls, and \cite{refee14} for the KS equation. 

Hierarchical and multi-objective control problems under Nash or Stackelberg--Nash strategies in the stochastic setting have only recently attracted attention; see, for instance, \cite{Nashstoch3,Nashstoch1,Nashstoch2} for stochastic parabolic equations, \cite{yuzhang23} for stochastic degenerate parabolic equations, and \cite{Nashstoch4} for stochastic evolution equations.

To the best of our knowledge, this is the first work addressing a robust Stackelberg controllability problem for stochastic partial differential equations. The present study extends deterministic results to the stochastic setting, tackling new challenges and providing a framework for hierarchical multi-objective robust control under stochastic perturbations, with particular emphasis on the stochastic KS--KdV equation.

\begin{rmk}\label{rmk1.1.}
In the presence of the disturbance $\psi_2 \neq 0$, the Carleman estimate in \cite[Theorem 3.1]{Gaochenli15} cannot be directly applied to prove the robust Stackelberg controllability of \eqref{eqq1.1} in Theorem \ref{th4.1SN}. To address this, in Section \ref{sec3sen} we derive an improved Carleman estimate (see Theorem \ref{themmain5.1}) that requires the diffusion term only in $L^2(0,1)$ instead of $H^2(0,1)$. This estimate is crucial in Section \ref{sec3} to obtain the  Carleman estimate in Lemma \ref{thmm5.1} for the coupled system \eqref{ADJSO1}, since the diffusion term in the forward equation involves $P$, and a direct application of \cite[Theorem 3.1]{Gaochenli15} would require unavailable estimates for $P_x$ and $P_{xx}$. Moreover, it allows the spatial regularity of the coefficient $b$ to be reduced from $W^{2,\infty}(0,1)$ to $L^\infty(0,1)$. Furthermore, in order to handle both the KdV mean term $y_{xxx}$ in \eqref{eqq1.1} and the second derivative term $y_{xx}$ in \eqref{robustcost}, we employ in Section \ref{sec3} the new derived Carleman estimates for both the forward equation (Theorem \ref{themmain5.1}) and the backward equation (Theorem \ref{thmm3.2p}) for stochastic fourth-order parabolic equations.
\end{rmk}
\begin{rmk}
In \eqref{eqq1.1}, although the follower control \(v\) acts on the drift term, the results of this paper remain valid when the control \(v\) acts instead on the diffusion term.
\end{rmk}

The results obtained in this paper naturally lead to several challenging questions for future research:
\begin{itemize}
\item The assumption \(\mathcal{O} \cap \mathcal{O}_d^0 \nsubseteq (\mathcal{O}_d^1\cup\mathcal{O}_d^2)\) in \eqref{Assump10} ensures a partial separation between the observation regions \(\mathcal{O}_d^0\), \(\mathcal{O}_d^1\) and \(\mathcal{O}_d^2\), corresponding to the state and its first and second derivatives, respectively. This condition is crucial for deriving the main Carleman estimate \eqref{Carlem5.9} and, in particular, excludes the cases where \(\mathcal{O}_d^0 = \mathcal{O}_d^1\) and/or \(\mathcal{O}_d^0 = \mathcal{O}_d^2\), which remain to be addressed.
\item Studying the robust Stackelberg controllability of \eqref{eqq1.1} when the follower control \(v\) and/or the leader control \(f\) acts locally on the boundary leads to a classical controllability problem for KS--KdV systems coupled through the boundary. This setting requires Carleman estimates of a different nature from those developed in Section~\ref{sec3sen}, namely, estimates adapted to fourth-order parabolic equations with nonhomogeneous boundary conditions.
\item The controllability result in Theorem \ref{th4.1SN} is obtained using two leader controls \(f\) and \(g\) acting on the drift and diffusion terms, respectively. It would be of interest to investigate whether the Lebeau--Robbiano strategy \cite{lu2011some} allows one to achieve the result with a single leader control. Moreover, the present work is restricted to the linear stochastic KS--KdV equation. Extending the analysis to the semilinear case with the nonlinear transport term \(y\,y_x\) remains open, mainly due to its non-globally Lipschitz nature and the lack of compactness (\cite[Remark 2.5]{Tangzhang9}) in the functional framework. Classical deterministic techniques do not directly extend to the stochastic setting, and such an extension would likely require truncation methods, fixed-point arguments, and suitable a priori estimates \cite{hevicto12}.
\end{itemize}

The rest of this paper is organized as follows. In Section \ref{sec2}, we study the well-posedness of the robust follower control problem and characterize its solution. Section \ref{sec3sen} presents improved Carleman estimates for one-dimensional forward and backward stochastic fourth-order parabolic equations. In Section \ref{sec3}, we derive the main Carleman estimate for a strongly coupled stochastic KS–KdV system. Finally, Section \ref{sec4} is devoted to the proof of the main result.

\section{Robust Follower Control: Well-Posedness and Characterization}\label{sec2}

In this section, we analyze the robust follower control problem for a fixed pair of leader controls $(f,g) \in \mathcal{H}^{f,g}$. Our objective is to establish the existence, uniqueness, and explicit characterization of the optimal follower control $v^\star$ together with the corresponding adversarial disturbances $\psi_1^\star$ and $\psi_2^\star$. Specifically, we seek the saddle point $(\psi_1^\star,\psi_2^\star,v^\star)$ that simultaneously minimizes the deviation of the system from a prescribed target while accounting for the worst-case effect of the disturbances. 

The proof of existence and uniqueness of a solution $(\psi_1^\star, \psi_2^\star, v^\star)$ to the robust optimal control problem \eqref{miniprobofjtil} relies on the following classical result (see \cite{eketm99}).
\begin{lm}\label{lem:saddle_point}
Let $J$ be a functional defined on $X \times Y$, where $X$ and $Y$ are non-empty, closed, unbounded convex sets. Assume that $J$ satisfies the following conditions:
\begin{enumerate}[(a)]
    \item For all $\psi \in X$, the mapping $v \mapsto J(\psi,v)$ is convex and lower semicontinuous.
    \item For all $v \in Y$, the mapping $\psi \mapsto J(\psi,v)$ is concave and upper semicontinuous.
    \item There exists $\psi_0 \in X$ such that 
    \[
    \lim_{\|v\|_Y \to +\infty} J(\psi_0, v) = +\infty.
    \]
    \item There exists $v_0 \in Y$ such that 
    \[
    \lim_{\|\psi\|_X \to +\infty} J(\psi, v_0) = -\infty.
    \]
\end{enumerate}
Then $J$ admits at least one saddle point $(\psi^\star, v^\star)$ satisfying
\[
J(\psi^\star, v^\star) = \min_{v \in Y} \max_{\psi \in X} J(\psi,v) = \max_{\psi \in X} \min_{v \in Y} J(\psi,v), \quad \forall (\psi,v)\in X\times Y.
\]
Moreover, if $v \mapsto J(\psi,v)$ is strictly convex for all $\psi \in X$, and $\psi \mapsto J(\psi,v)$ is strictly concave for all $v \in Y$, then the saddle point $(\psi^\star, v^\star)$ is unique.
\end{lm}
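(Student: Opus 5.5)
This is the classical Ky Fan--Sion type minimax theorem as stated in Ekeland--Temam. The proof goes through an auxiliary problem with bounded sets, a limiting argument using coercivity, and a separate uniqueness argument.

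\emph{Step 1 (bounded sets).} For $n\in\mathbb{N}$ large, set $X_n:=X\cap \overline{B}(\psi_0,n)$ and $Y_n:=Y\cap \overline{B}(v_0,n)$. These are nonempty, closed, bounded and convex, hence weakly compact in the underlying reflexive (Hilbert) spaces. Convexity together with lower semicontinuity makes $v\mapsto J(\psi,v)$ weakly lower semicontinuous, and the analogous statement holds for weak upper semicontinuity in $\psi$. The compact minimax theorem (Sion/Ky Fan, proved via the KKM lemma or a Hahn--Banach separation argument) then yields a saddle point $(\psi_n,v_n)\in X_n\times Y_n$ with
\[
J(\psi,v_n)\le J(\psi_n,v_n)\le J(\psi_n,v)\qquad \forall (\psi,v)\in X_n\times Y_n.
\]

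\emph{Step 2 (uniform bounds).} Taking $\psi=\psi_0$ and $v=v_0$ gives
\[
J(\psi_0,v_n)\le J(\psi_n,v_n)\le J(\psi_n,v_0).
\]
Both inequalities are valid for $n$ large, since then $\psi_0\in X_n$ and $v_0\in Y_n$. Hence $J(\psi_0,v_n)\le J(\psi_n,v_0)\le J(\psi_1',v_0)$ is not directly usable as an upper bound, so I argue as follows. Note first that $J(\psi_0,v_0)$ is sandwiched: $J(\psi_0,v_n)\le J(\psi_n,v_n)\le J(\psi_n,v_0)$, while $J(\psi_n,v_n)\ge J(\psi_0,v_n)$ and $J(\psi_n,v_n)\le J(\psi_n,v_0)$. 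Combining these yields
\[
J(\psi_0,v_n)\le J(\psi_n,v_0).
\]
Suppose $\|v_n\|\to\infty$ along a subsequence. Then by (c) the left side tends to $+\infty$, so $J(\psi_n,v_0)\to+\infty$. By (d), this forces $\|\psi_n\|$ to stay bounded, and then concavity plus upper semicontinuity (a concave u.s.c. function is bounded above on bounded sets via an affine majorant) bounds $J(\psi_n,v_0)$ above, a contradiction. Symmetrically, $\|\psi_n\|$ stays bounded. This step is the main obstacle: it requires carefully using the affine minorant/majorant of l.s.c. convex functions to rule out the cross case.

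\emph{Step 3 (passage to the limit).} Extract weakly convergent subsequences $\psi_n\rightharpoonup\psi^\star$ and $v_n\rightharpoonup v^\star$; closed convex sets are weakly closed, so $\psi^\star\in X$ and $v^\star\in Y$. Fix any $(\psi,v)$; it lies in $X_n\times Y_n$ for $n$ large. By weak l.s.c. in $v$,
\[
J(\psi,v^\star)\le\liminf_n J(\psi,v_n)\le\liminf_n J(\psi_n,v).
\]
The last expression is at most $J(\psi^\star,v)$ by weak u.s.c. in $\psi$, but this uses mixed semicontinuity. I would avoid it as follows: pass through $J(\psi,v_n)\le J(\psi_n,v)$ and use separate semicontinuity twice (first in $v$ for fixed $\psi$, then in $\psi$ for fixed $v$) to get $J(\psi,v^\star)\le J(\psi^\star,v)$ for all $(\psi,v)$. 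Taking $v=v^\star$ and $\psi=\psi^\star$ shows that $(\psi^\star,v^\star)$ is a saddle point. The equalities $\min\max=\max\min=J(\psi^\star,v^\star)$ then follow from the standard inequality $\max\min\le\min\max$ together with the saddle inequalities.

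\emph{Step 4 (uniqueness).} Let $(\psi_1,v_1)$ and $(\psi_2,v_2)$ be two saddle points. The saddle inequalities give
\[
J(\psi_1,v_1)\le J(\psi_1,v_2)\le J(\psi_2,v_2)\le J(\psi_2,v_1)\le J(\psi_1,v_1),
\]
so all four values are equal. Then $v_1$ and $v_2$ both minimize $J(\psi_1,\cdot)$, and strict convexity forces $v_1=v_2$. Likewise, strict concavity of $J(\cdot,v_1)$ forces $\psi_1=\psi_2$.
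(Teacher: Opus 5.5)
Your proposal is correct and follows the classical proof in Ekeland--Temam: truncation to bounded sets, the compact minimax theorem in the weak topology, coercivity bounds, weak limits, and the four-term chain for uniqueness; the paper only cites this result and does not reprove it. Two expository remarks: Step 2 is cleaner if you note at once that $J(\psi_0,\cdot)$ is bounded below on $Y$ and $J(\cdot,v_0)$ is bounded above on $X$ (coercivity plus an affine minorant, resp.\ majorant, on a large ball), so that both sides of $J(\psi_0,v_n)\le J(\psi_n,v_0)$ lie in a fixed bounded interval and (c), (d) bound $v_n$ and $\psi_n$ directly, without the muddled first sentence involving the undefined $\psi_1'$; and in Step 3 the chain $J(\psi,v^\star)\le\liminf_n J(\psi,v_n)\le\liminf_n J(\psi_n,v)\le\limsup_n J(\psi_n,v)\le J(\psi^\star,v)$ uses only separate semicontinuity, so there is no mixed-semicontinuity issue to avoid.
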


Applying Lemma \ref{lem:saddle_point} with $X = \mathcal{H}^{\psi}$ and $Y = \mathcal{H}^v$, it is straightforward to verify that the cost functional $\mathcal{J}_r$ in \eqref{miniprobofjtil} satisfies all the required assumptions. Consequently, we obtain the following result on the existence and uniqueness of the saddle point.

\begin{prop}[Existence and Uniqueness of the Follower–Disturbance Saddle Point]\label{propp4.1sapoi}
Let $(f,g)\in \mathcal{H}^{f,g}$ be fixed and let $y_0 \in L^2_{\mathcal{F}_0}(\Omega;L^2(0,1))$. For sufficiently large $\delta_1,\delta_2>0$, there exists a unique saddle point $(\psi_1^\star,\psi_2^\star, v^\star) \in \mathcal{H}^{\psi} \times \mathcal{H}^v$ and a corresponding solution $y(y_0,f,g,\psi_1^\star,\psi_2^\star, v^\star)$ of \eqref{eqq1.1} such that
\[
\mathcal{J}_r(f,g;\psi_1^\star,\psi_2^\star, v^\star) = \max_{(\psi_1,\psi_2) \in \mathcal{H}^{\psi}} \min_{v \in \mathcal{H}^v} \mathcal{J}_r(f,g;\psi_1,\psi_2,v) = \min_{v \in \mathcal{H}^v} \max_{(\psi_1,\psi_2) \in \mathcal{H}^{\psi}} \mathcal{J}_r(f,g;\psi_1,\psi_2,v).
\]
\end{prop}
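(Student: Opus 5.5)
We verify the hypotheses of Lemma \ref{lem:saddle_point} with $X=\mathcal{H}^{\psi}$, $Y=\mathcal{H}^v$ and $J(\psi,v):=\mathcal{J}_r(f,g;\psi_1,\psi_2,v)$, where $\psi=(\psi_1,\psi_2)$. Both sets are Hilbert spaces, hence nonempty, closed, unbounded and convex. The basic observation is linearity. By well-posedness of \eqref{eqq1.1}, the solution splits as
\[
y=\bar y+\mathcal{L}_1\psi_1+\mathcal{L}_2\psi_2+\mathcal{M}v,
\]
where $\bar y=y(y_0,f,g,0,0,0)$ and $\mathcal{L}_1,\mathcal{L}_2,\mathcal{M}$ are bounded linear operators into $L^2_{\mathcal F}(0,T;H^2_0(0,1))$. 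Their norms are bounded by the constant $C$ of the a priori estimate. Since the tracking part of $\mathcal{J}_r$ only involves $y,y_x,y_{xx}$ on $Q$, it is a continuous quadratic functional $\Phi(y)=\frac12\|\Lambda y-\mathbf{y}_d\|^2$. Here $\Lambda y=(y\chi_{\mathcal{O}_d^0},y_x\chi_{\mathcal{O}_d^1},y_{xx}\chi_{\mathcal{O}_d^2})$ is bounded from $L^2_{\mathcal F}(0,T;H^2_0)$ into $(L^2_{\mathcal F}(0,T;L^2(0,1)))^3$. Consequently $J$ is continuous, which gives the semicontinuity in (a) and (b).

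\textbf{Convexity in $v$.} For fixed $\psi$, the map $v\mapsto \Phi(\bar y+\mathcal L\psi+\mathcal M v)$ is a convex quadratic, being a norm squared composed with an affine map. The term $\frac{\beta}{2}\|v\|^2$ is strictly convex. Hence (a) holds with strict convexity. For (c), take $\psi_0=0$. Then $J(0,v)\ge \frac\beta2\|v\|^2_{\mathcal H^v}\to+\infty$.

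\textbf{Concavity in $\psi$.} This is the step requiring $\delta_1,\delta_2$ large, and the only real point of the proof. For fixed $v$, set $A\psi:=\Lambda(\mathcal L_1\psi_1+\mathcal L_2\psi_2)$, a bounded linear operator with $\|A\|\le C_0$, where $C_0$ depends only on $T,\|a\|_\infty,\|b\|_\infty$. The second derivative of $\psi\mapsto J(\psi,v)$ in direction $h=(h_1,h_2)$ equals
\[
\|Ah\|^2-\delta_1\|h_1\|^2-\delta_2\|h_2\|^2\le (2C_0^2-\min(\delta_1,\delta_2))\|h\|^2_{\mathcal H^\psi}.
\]
Hence, if $\min(\delta_1,\delta_2)>2C_0^2$, the quadratic form is negative definite and $\psi\mapsto J(\psi,v)$ is strictly (indeed uniformly) concave, which gives (b).

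For (d), take $v_0=0$. Put $\gamma:=\min(\delta_1,\delta_2)-2C_0^2>0$ and write $z:=\Lambda(\bar y+\mathcal{L}_1\psi_1+\mathcal{L}_2\psi_2)-\mathbf y_d$. Using $\|z\|^2\le 2\|\Lambda\bar y-\mathbf y_d\|^2+2C_0^2\|\psi\|^2$, we get
\[
J(\psi,0)\le \|\Lambda\bar y-\mathbf y_d\|^2 -\frac{\gamma}{2}\|\psi\|^2\to-\infty .
\]

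Lemma \ref{lem:saddle_point} then yields existence of a saddle point, and the strict convexity and strict concavity give uniqueness. The corresponding state is $y(y_0,f,g,\psi_1^\star,\psi_2^\star,v^\star)$. The only delicate issue is making explicit that the threshold on $\delta_1,\delta_2$ depends only on the continuity constant of the solution map into $H^2$. That constant comes from the stated well-posedness estimate, and it is independent of $(f,g)$, $y_0$ and $y_d^i$.
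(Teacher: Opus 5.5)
Your proposal is correct and takes the same route as the paper. The paper simply applies Lemma~\ref{lem:saddle_point} with $X=\mathcal{H}^{\psi}$ and $Y=\mathcal{H}^v$, asserting that the hypotheses are straightforward to check; you supply that verification explicitly through the affine dependence of $y$ on $(\psi_1,\psi_2,v)$, including a concrete threshold $\min(\delta_1,\delta_2)>2C_0^2$ that depends only on the norm of the solution map and is left implicit in the paper.
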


Let us now characterize the saddle point $(\psi_1^{\star},\psi_2^{\star},v^{\star})$ obtained in Proposition \ref{propp4.1sapoi}.  
Since the functional $\mathcal{J}_r$ is Fréchet differentiable on $\mathcal{H}^{\psi} \times \mathcal{H}^v$, the fact that $(\psi_1^{\star},\psi_2^{\star},v^{\star})$ is a saddle point implies the first-order optimality conditions: 
\begin{align}
\label{gatder1_psi1}
\frac{\partial \mathcal{J}_r}{\partial \psi_i}(f,g;\psi_1^{\star},\psi_2^{\star},v^{\star})(u_i) &= 0, 
\quad \forall u_i \in \mathcal{H}^{\psi_i},\quad i=1,2,\\
\label{gatder_v}
\frac{\partial \mathcal{J}_r}{\partial v}(f,g;\psi_1^{\star},\psi_2^{\star},v^{\star})(w) &= 0, 
\quad \forall w \in \mathcal{H}^v.
\end{align}
By computing \eqref{gatder1_psi1}, one readily obtains
\begin{align}
\label{equau11_psi1}
\sum_{j=0}^2\mathbb{E}\iint_Q  (\partial_x^j y-y_d^{j})\partial_x^j\xi^{u_i}\chi_{\mathcal{O}_d^{j}} \,dx\,dt 
- \delta_i\,\mathbb{E}\iint_Q u_i\,\psi_i^{\star} \,dx\, dt &= 0,
\quad \forall u_i\in \mathcal{H}^{\psi_i},\quad i=1,2,
\end{align}
where $y$ denotes the solution of \eqref{eqq1.1} associated with $f$, $g$, $\psi_1^{\star}$, $\psi_2^{\star}$, and $v^{\star}$, and $\xi^{u_1}$ and $\xi^{u_2}$ are, respectively, the solutions of the forward stochastic KS--KdV systems:
\begin{equation}\label{1.10psi1}
\begin{cases}
d\xi^{u_1} +(k\,\xi^{u_1}_{xx}+  \xi^{u_1}_{xxx}+\eta\,\xi^{u_1}_{xxxx})\,dt
= \big[ a\xi^{u_1}  + u_1 \big] dt +b\xi^{u_1} dW(t), & (t,x)\in Q,\\
\xi^{u_1}(t,0)=\xi^{u_1}(t,1)=0, & t\in(0,T),\\
\xi^{u_1}_x(t,0)=\xi^{u_1}_x(t,1)=0, & t\in(0,T),\\
\xi^{u_1}(0,x)=0, & x\in(0,1),
\end{cases}
\end{equation}
and 
\begin{equation}\label{1.10psi2}
\begin{cases}
d\xi^{u_2} +(k\,\xi^{u_2}_{xx}+  \xi^{u_2}_{xxx}+\eta\,\xi^{u_2}_{xxxx})\,dt
= a\xi^{u_2}  dt + \big[ b\xi^{u_2}  + u_2 \big] dW(t), & (t,x)\in Q,\\
\xi^{u_2}(t,0)=\xi^{u_2}(t,1)=0, & t\in(0,T),\\
\xi^{u_2}_x(t,0)=\xi^{u_2}_x(t,1)=0, & t\in(0,T),\\
\xi^{u_2}(0,x)=0, & x\in(0,1).
\end{cases}
\end{equation}
Similarly, for the follower control $v^\star$, we have 
\begin{align}\label{equau11_v}
\sum_{j=0}^2\mathbb{E}\iint_Q  (\partial_x^j y-y_d^{j})\partial_x^j\xi^{w}\chi_{\mathcal{O}_d^{j}} \,dx\,dt  
+ \beta\,\mathbb{E}\iint_Q w\,v^{\star} \chi_{\mathcal{D}}\,dx\, dt = 0,
\quad \forall w\in \mathcal{H}^v,
\end{align}
where $\xi^w$ solves the system
\begin{equation}\label{1.10_v}
\begin{cases}
d\xi^w +(k\,\xi^w_{xx}+  \xi^w_{xxx}+\eta\,\xi^w_{xxxx})\,dt
= \big[ a\xi^w  + w\,\chi_{\mathcal{D}} \big] dt + b\xi^w  dW(t), & (t,x)\in Q,\\
\xi^w(t,0)=\xi^w(t,1)=0, & t\in(0,T),\\ \xi^w_x(t,0)=\xi^w_x(t,1)=0, & t\in(0,T),\\
\xi^w(0,x)=0, & x\in(0,1).
\end{cases}
\end{equation}
Next, we introduce the following adjoint backward stochastic KS--KdV system
\begin{equation}\label{backadj_two}
\begin{cases}
dz -(k z_{xx}-  z_{xxx}+\eta\,z_{xxxx})\,dt
= \bigg[ -az - bZ   +\displaystyle\sum_{i=0}^2 (-1)^{i+1}\partial_x^i(\partial_x^i y -y^{i}_d)\,\chi_{\mathcal{O}_d^{i}} \bigg] dt \\
\hspace{5.4cm}+ Z\,dW(t), & (t,x)\in Q,\\
z(t,0)=z(t,1)=0, & t\in(0,T),\\ z_x(t,0)=z_x(t,1)=0, & t\in(0,T),\\
z(T,x)=0, & x\in(0,1).
\end{cases}
\end{equation}
Applying Itô's formula to the products $\xi^{u_1} z$, $\xi^{u_2} z$, and $\xi^w z$, integrating over $Q$, and taking expectations, we obtain the variational identities:
\begin{align}
\label{eqqgatde1}
&\mathbb{E}\iint_Q z\, u_1 \,dx\, dt 
- \sum_{j=0}^2\mathbb{E}\iint_Q  (\partial_x^j y-y_d^{j})\partial_x^j\xi^{u_1}\chi_{\mathcal{O}_d^{j}} \,dx\,dt  = 0, 
\quad \forall u_1 \in \mathcal{H}^{\psi_1},\\[1mm]
\label{eqqgatde2}
&\mathbb{E}\iint_Q Z\, u_2 \,dx\, dt 
- \sum_{j=0}^2\mathbb{E}\iint_Q  (\partial_x^j y-y_d^{j})\partial_x^j\xi^{u_2}\chi_{\mathcal{O}_d^{j}} \,dx\,dt  = 0, 
\quad \forall u_2 \in \mathcal{H}^{\psi_2},\\[1mm]
\label{eqqgatde3}
&\mathbb{E}\iint_Q z\, w \chi_{\mathcal{D}} \,dx\, dt 
- \sum_{j=0}^2\mathbb{E}\iint_Q  (\partial_x^j y-y_d^{j})\partial_x^j\xi^{w}\chi_{\mathcal{O}_d^{j}} \,dx\,dt  = 0, 
\quad \forall w \in \mathcal{H}^v.
\end{align}
Combining \eqref{eqqgatde1}, \eqref{eqqgatde2}, and \eqref{eqqgatde3} with 
\eqref{equau11_psi1} (for $i=1,2$) and \eqref{equau11_v}, 
we obtain the following explicit characterization of the saddle point $(\psi_1^{\star},\psi_2^{\star},v^{\star})$:
\begin{equation*}
\psi_1^\star = \frac{1}{\delta_1}\, z, \qquad
\psi_2^\star = \frac{1}{\delta_2}\, Z, \qquad
v^\star = -\frac{1}{\beta}\, z \, \chi_{\mathcal{D}},
\end{equation*}
where $(z,Z)$ is the solution of the system \eqref{backadj_two}. 

Therefore, the robust Stackelberg optimal control problem for system \eqref{eqq1.1} is reduced to establishing the null controllability of the solutions to the following strongly coupled forward–backward stochastic KS--KdV system, which will be referred to as the \emph{optimality system}
\begin{equation}\label{eqq4.7}
\begin{cases}
\begin{array}{lll}
dy +(k\,y_{xx}+  y_{xxx}+\eta\,y_{xxxx})\,dt
= \left[ay+ f\chi_{\mathcal{O}}
-\frac{1}{\beta} z\chi_{\mathcal{D}}+\frac{1}{\delta_1}\, z\right] dt+ \big[by+g+\frac{1}{\delta_2}\, Z\big]\,dW(t),
& (t,x)\in Q,\\[2mm]
dz -(k\,z_{xx}-  z_{xxx}+\eta\,z_{xxxx})\,dt
= \bigg[-az-bZ+\displaystyle\sum_{i=0}^2 (-1)^{i+1}\partial_x^i(\partial_x^i y -y^{i}_d)\,\chi_{\mathcal{O}_d^{i}}\bigg] dt
\\
\hspace{5.4cm}+ Z\,dW(t),
& (t,x)\in Q,\\[2mm]
y(t,0)=y(t,1)=0, & t\in(0,T),\\
y_x(t,0)=y_x(t,1)=0, & t\in(0,T),\\
z(t,0)=z(t,1)=0, & t\in(0,T),\\
z_x(t,0)=z_x(t,1)=0, & t\in(0,T),\\
y(0,x)=y_0(x), & x\in(0,1),\\
z(T,x)=0, & x\in(0,1).
\end{array}
\end{cases}
\end{equation}

By classical duality arguments, the null controllability of system \eqref{eqq4.7} can be established by proving an appropriate observability inequality for the following associated coupled backward–forward stochastic KS--KdV system
\begin{equation}\label{ADJSO1}
\begin{cases}
\begin{array}{lll}
dp -(k\,p_{xx}-  p_{xxx}+\eta\,p_{xxxx})\,dt
= \big[-ap-bP+q\chi_{\mathcal{O}_d^0}-q_{xx}\chi_{\mathcal{O}_d^1}+q_{xxxx}\chi_{\mathcal{O}_d^2}\big] dt
+ P\,dW(t),
& (t,x)\in Q,\\[2mm]
dq +(k\,q_{xx}+  q_{xxx}+\eta\,q_{xxxx})\,dt
= \left[aq+\frac{1}{\beta} p\chi_{\mathcal{D}}-\frac{1}{\delta_1}\,p\right] dt+ \big[bq-\frac{1}{\delta_2}\,P\big]\,dW(t),
& (t,x)\in Q,\\[2mm]
p(t,0)=p(t,1)=0, & t\in(0,T),\\
p_x(t,0)=p_x(t,1)=0, & t\in(0,T),\\
q(t,0)=q(t,1)=0, & t\in(0,T),\\
q_x(t,0)=q_x(t,1)=0, & t\in(0,T),\\
p(T,x)=p_T(x), & x\in(0,1),\\
q(0,x)=0, & x\in(0,1).
\end{array}
\end{cases}
\end{equation}
\section{Carleman Estimates for Forward and Backward Stochastic Fourth-Order Parabolic Equations}\label{sec3sen}
In this section, we employ a duality approach to establish new Carleman estimates for one-dimensional forward and backward stochastic fourth-order parabolic equations. Here, the drift term is allowed to belong to the negative Sobolev space $H^{-2}(0,1)$, while the diffusion term lies in $L^2(0,1)$. In contrast to the weighted identity method used in \cite{Gaochenli15}, our results relax the spatial regularity requirements on both the drift and diffusion terms. This improvement will be crucial for deriving a Carleman estimate for the coupled system \eqref{ADJSO1}, particularly to handle the third-order derivatives of $p$ and $q$ and the $L^2$ regularity of the diffusion term in the second equation.

To this end, we first introduce suitable weight functions based on the following result (\cite{BFurIman}): If $\mathcal{O}\cap\mathcal{O}_d^0\neq\emptyset$, then for any nonempty set $\mathcal{B}\Subset\mathcal{O}\cap\mathcal{O}_d^0$, there exists a function $\kappa\in C^\infty([0,1])$ such that
\[
\kappa>0 \quad \text{in } (0,1), \qquad \kappa(0)=\kappa(1)=0, \qquad \|\kappa\|_{C([0,1])}=1, \qquad |\kappa_x|>0 \quad \text{in } [0,1]\setminus \mathcal{B},
\]
\[
\kappa_x(0)>0, \qquad \kappa_x(1)<0.
\]
For large parameters $\lambda,\mu \geq 1$, we choose the following weight functions. For any $(t,x)\in Q$,
\begin{align}\label{weightfunct}
\begin{aligned}
\varphi=\varphi(x)&=\exp(5\mu)-\exp(\mu(\kappa(x)+3)),\qquad\gamma=\gamma(t)=\frac{1}{t(T-t)},\\
&\alpha\equiv\alpha(t,x) = \varphi(x)\gamma(t),\qquad\theta=\exp(-\lambda\alpha).
\end{aligned}
\end{align}
It is easy to verify that there exists a constant $C=C(\mathcal{B})>0$ such that for any $s>0$,
\begin{align}\label{aligned123}
\begin{aligned}
&\,\gamma^{-s}\leq CT^{2s},\quad\qquad\vert\gamma_t\vert\leq CT\gamma^2,\quad\qquad\vert\gamma_{tt}\vert\leq CT^2\gamma^3,\\
&\vert\alpha_t\vert\leq CTe^{5\mu}\gamma^2,\quad\qquad\vert\alpha_{tt}\vert\leq CT^2e^{5\mu}\gamma^3.
\end{aligned}\end{align}

\subsection{Forward Case}
We consider the following forward stochastic fourth-order parabolic equation: 
\begin{equation}\label{eqqgfrse}
\begin{cases}
\begin{array}{ll}
dz +\eta\,z_{xxxx} \,dt = (F_1+F_{2,x}+F_{3,xx}) \,dt + F_4\,dW(t),&\quad (t,x)\in Q,\\
z(t,0)=z(t,1)=0,&\quad t\in(0,T),\\
z_x(t,0)=z_x(t,1)=0,&\quad t\in(0,T),\\
z(0,x)=z_0(x),&\quad x\in(0,1),
\end{array}
\end{cases}
\end{equation}
where $z_0\in L^2_{\mathcal{F}_0}(\Omega;L^2(0,1))$ is the initial state, $F_i\in L^2_\mathcal{F}(0,T;L^2(0,1))$ ($1\leq i\leq4$) source terms. We prove the following new Carleman estimate. 
\begin{thm}\label{themmain5.1}
There exist a large $\mu_1\geq1$ such that for $\mu=\mu_1$, one can find constants $C>0$ and $\lambda_1\geq1$ depending only on $\mathcal{B}$ and $\mu_1$ such that for all $F_i\in L^2_\mathcal{F}(0,T;L^2(0,1))$ and $z_0\in L^2_{\mathcal{F}_0}(\Omega;L^2(0,1))$, the solution $z\in L^2_\mathcal{F}(\Omega;C([0,T];L^2(0,1)))\bigcap L^2_\mathcal{F}(0,T;H^2_0(0,1))$ of \eqref{eqqgfrse} satisfies that
\begin{align}\label{carfor5.6}
\begin{aligned}
&\lambda^7\mathbb{E}\iint_Q \theta^2\gamma^7|z|^2 \,dx \,dt +\lambda^5\mathbb{E}\iint_Q \theta^2\gamma^5|z_x|^2 \,dx \,dt +\lambda^3\mathbb{E}\iint_Q \theta^2\gamma^3|z_{xx}|^2 \,dx \,dt\\
&\leq C \bigg[\lambda^{7}\mathbb{E}\int_0^T\int_\mathcal{B} \theta^{2}\gamma^{7} |z|^2 \,dx\,dt+ \mathbb{E}\iint_Q \theta^{2}|F_1|^2 \,dx\,dt+ \lambda^{2}\mathbb{E}\iint_Q \theta^{2}\gamma^{2}|F_2|^2 \,dx\,dt\\
&\hspace{1cm}+\lambda^{4}\mathbb{E}\iint_Q \theta^{2}\gamma^{4} |F_3|^2 \,dx\,dt+\lambda^{4}\mathbb{E}\iint_Q \theta^{2}\gamma^{4} |F_4|^2 \,dx\,dt\bigg], 
\end{aligned}\end{align}
for any $\lambda\geq\lambda_1(T+T^2)$.
\end{thm}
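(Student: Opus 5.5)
The proof will go by duality, as announced at the beginning of Section~\ref{sec3sen}; this is the stochastic analogue of the Imanuvilov--Puel / Liu approach for parabolic equations with $H^{-1}$ sources. Fix $\mu=\mu_1$ and write $\theta=e^{-\lambda\alpha}$. The key input is a Carleman estimate for the \emph{backward} adjoint equation with an $L^2$ drift, obtained by the weighted identity method (cf.\ \cite{Gaochenli15}). For a backward stochastic fourth-order equation
\[
dh-\eta h_{xxxx}\,dt=G\,dt+H\,dW(t),\qquad h=h_x=0 \text{ on } x=0,1,
\]
that estimate has the form
\[
\lambda^7\mathbb{E}\iint_Q\theta^2\gamma^7|h|^2+\dots+\lambda\mathbb{E}\iint_Q\theta^2\gamma|h_{xxx}|^2\le C\Big[\lambda^7\mathbb{E}\int_0^T\!\!\int_{\mathcal{B}'}\theta^2\gamma^7|h|^2+\mathbb{E}\iint_Q\theta^2|G|^2\Big].
\]
Here $\mathcal{B}'\Subset\mathcal{B}$, and no weighted $H$-term appears on the right, because the diffusion of a backward equation is an unknown.

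\textbf{Step 1: an optimal control problem.} Minimize
\[
\frac12\mathbb{E}\iint_Q\theta^{-2}\gamma^{-7}|u|^2+\frac{\lambda^{-7}}{2}\mathbb{E}\int_0^T\!\!\int_{\mathcal{B}}\theta^{-2}\gamma^{-7}|v|^2\,dx\,dt
\]
over controls $(v,U)$ for the backward controlled system
\[
du-\eta u_{xxxx}\,dt=(\lambda^7\theta^2\gamma^7 z+v\chi_{\mathcal{B}})\,dt+U\,dW,\qquad u(0)=0,\ u(T)=0.
\]
The minimizer is found as a penalized limit in the usual way, and its existence and uniform bounds follow from the Carleman estimate for the backward equation. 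The outcome is a pair $(u,v)$ satisfying
\[
\mathbb{E}\iint_Q\theta^{-2}\gamma^{-7}|u|^2+\lambda^{-7}\mathbb{E}\int_0^T\!\!\int_{\mathcal{B}}\theta^{-2}\gamma^{-7}|v|^2\le C\lambda^7\mathbb{E}\iint_Q\theta^2\gamma^7|z|^2 .
\]
Weighted higher-order bounds for the same pair,
\[
\lambda^{-2}\mathbb{E}\iint\theta^{-2}\gamma^{-5}|u_x|^2,\quad \lambda^{-4}\mathbb{E}\iint\theta^{-2}\gamma^{-3}|u_{xx}|^2,\quad \lambda^{-4}\mathbb{E}\iint\theta^{-2}\gamma^{-4}|U|^2\le C\lambda^7\mathbb{E}\iint\theta^2\gamma^7|z|^2,
\]
come from an energy estimate: apply It\^o's formula to $\theta^{-2}\gamma^{-k}|u|^2$ and use $|\alpha_t|\le CT\gamma^2$ from \eqref{aligned123} together with $\lambda\ge\lambda_1(T+T^2)$ to absorb the derivatives of the weights.

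\textbf{Step 2: duality.} Apply It\^o's formula to $z u$, using $u(0)=u(T)=0$ so that no boundary terms in time survive. Integrating by parts in $x$ (the boundary conditions kill the spatial boundary terms) gives
\[
\lambda^7\mathbb{E}\iint_Q\theta^2\gamma^7|z|^2=-\mathbb{E}\int_0^T\!\!\int_{\mathcal{B}}vz+\mathbb{E}\iint_Q\big(F_1u-F_2u_x+F_3u_{xx}+F_4U\big).
\]
Each term is then split by Cauchy--Schwarz with the weights distributed as $\theta^{\pm1}\gamma^{\pm k/2}$. The $F_3$ and $F_4$ terms pair against $\lambda^{-4}\theta^{-2}\gamma^{-4}$, which is why $\lambda^4\theta^2\gamma^4$ appears on the right-hand side of \eqref{carfor5.6}; the $F_2$ term produces $\lambda^2\theta^2\gamma^2|F_2|^2$ in the same way. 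Absorbing $\varepsilon\lambda^7\mathbb{E}\iint\theta^2\gamma^7|z|^2$ into the left-hand side yields the $|z|^2$ part of \eqref{carfor5.6}.

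\textbf{Step 3: the derivative terms.} To recover the $\lambda^5\gamma^5|z_x|^2$ and $\lambda^3\gamma^3|z_{xx}|^2$ terms, apply It\^o's formula to $\theta^2\gamma^3|z|^2$ for the forward equation \eqref{eqqgfrse}. This yields
\[
\lambda^3\mathbb{E}\iint\theta^2\gamma^3|z_{xx}|^2\le C\,\mathbb{E}\iint\big(\lambda^7\theta^2\gamma^7|z|^2+\text{source terms}\big),
\]
where the lower-order terms generated by differentiating $\theta^2\gamma^3$ in $x$ are controlled using $|\partial_x^j(\theta^2)|\le C\lambda^j\gamma^j\theta^2$. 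The It\^o correction gives $\mathbb{E}\iint\theta^2\gamma^3|F_4|^2$, which is dominated by the $\lambda^4\gamma^4$ term. The $z_x$ term then follows by interpolating between $z$ and $z_{xx}$.

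\textbf{Main obstacle.} I expect the main difficulty to be Step 1, namely the weighted estimate for $U$ with weight $\lambda^{-4}\theta^{-2}\gamma^{-4}$. Since $U$ is the diffusion of the backward controlled equation, it can only be bounded by an It\^o energy identity of $\theta^{-2}\gamma^{-4}|u|^2$ against $u_{xx}$-type terms. Closing this bound requires the weights to match exactly and relies on the full strength of the backward Carleman estimate.
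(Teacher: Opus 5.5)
The overall architecture matches the paper's: a penalized control problem for the backward equation with source $\lambda^7\theta^2\gamma^7 z$, It\^o duality against $z$, then It\^o's formula for $\lambda^3\theta^2\gamma^3|z|^2$ plus interpolation. The gap is the key input you name, which is misplaced and, as stated, not available.

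In the dual problem $U$ is not a control. Given $v$ and $u(T)=0$, the pair $(u,U)$ is determined, and the only target is $u(0)=0$. Hence the adjoint of the penalized problem is the \emph{forward random} equation $dr+\eta\, r_{xxxx}\,dt=\theta_\varepsilon^{-2}u\,dt$, $r(0)=\varepsilon^{-1}u(0)$, with no $dW$ term (cf.\ \eqref{2.04}). The optimal control is $v=\lambda^7\theta^2\gamma^7 r\chi_{\mathcal B}$. The uniform bounds then come from the deterministic Carleman estimate applied pathwise to $r$ (Lemma~\ref{lemma5.1}, from \cite{zhouZ12}), with the source controlled through $\theta^2\theta_\varepsilon^{-4}|u|^2\le\theta_\varepsilon^{-2}|u|^2$. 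The fact that this adjoint carries no noise is what makes the duality method work: no stochastic Carleman estimate enters the proof at all.

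The backward stochastic estimate you write down, with no weighted $H$-term on the right, is not what \cite{Gaochenli15} proves. Its backward estimate (Lemma~\ref{lemma5.1sec}) contains $\lambda^4\mathbb{E}\iint_Q\theta^2\gamma^4|Z|^2$, which is one reason $|P|^2$ must be observed in \eqref{observaineq}. Your justification (``the diffusion of a backward equation is an unknown'') does not make that term disappear. Because $H$ is an unknown of the equation, nothing on the left-hand side controls the $H$-terms produced by the It\^o corrections.

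Your Step~1 weights also do not produce \eqref{carfor5.6}:
\begin{itemize}
\item Pairing $F_1$ against a bound on $\mathbb{E}\iint_Q\theta^{-2}\gamma^{-7}|u|^2$ gives $\mathbb{E}\iint_Q\theta^2\gamma^7|F_1|^2$. This is weaker than the required $\mathbb{E}\iint_Q\theta^2|F_1|^2$.
\item Each $x$-derivative costs a factor $\lambda\gamma$. So the bounds on $\lambda^{-2}\theta^{-2}\gamma^{-5}|u_x|^2$ and $\lambda^{-4}\theta^{-2}\gamma^{-3}|u_{xx}|^2$ are inconsistent with the one on $u$. They also do not match the pairings you claim in Step~2, e.g.\ $\lambda^2\theta^2\gamma^2|F_2|^2$ requires $\lambda^{-2}\theta^{-2}\gamma^{-2}|u_x|^2$.
\end{itemize}
The cost should contain $\mathbb{E}\iint_Q\theta_\varepsilon^{-2}|u|^2$. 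The regularized weight is needed because $u(0)\neq0$ before the limit $\varepsilon\to0$, so $\theta^{-2}|u|^2$ would not be integrable near $t=0$. This choice yields exactly the bounds of \eqref{2.2ineq}: $\theta^{-2}|u|^2$, $\lambda^{-2}\theta^{-2}\gamma^{-2}|u_x|^2$ and $\lambda^{-4}\theta^{-2}\gamma^{-4}(|u_{xx}|^2+|U|^2)$. The last ones come from It\^o's formula for $\lambda^{-4}\theta_\varepsilon^{-2}\gamma^{-4}|u|^2$ and an integration by parts for $u_x$. This $U$-bound is routine once the weighted $L^2$ bounds on $u$ and $v$ are available, so it is not the main obstacle you anticipate.
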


Recalling the Carleman estimate for the one-dimensional deterministic fourth-order parabolic equation established in \cite[Theorem 1.1]{zhouZ12} for the operator ``\(\partial_t + \partial_x^4\)'', we deduce, by a simple time rescaling \(t \mapsto \eta t\), the following global Carleman estimate for \eqref{eqqgfrse} in the case where \(F_2 \equiv F_3 \equiv F_4 \equiv 0\).
\begin{lm}\label{lemma5.1}
There exist a large $\mu_0\geq1$ such that for $\mu=\mu_0$, one can find constants $C>0$ and $\lambda_0\geq1$ depending only on $\mathcal{B}$ and $\mu_0$ such that for all $F_1\in L^2_\mathcal{F}(0,T;L^2(0,1))$ and $z_0\in L^2_{\mathcal{F}_0}(\Omega;L^2(0,1))$, the solution $z\in L^2_\mathcal{F}(\Omega;C([0,T];L^2(0,1)))\bigcap L^2_\mathcal{F}(0,T;H^2_0(0,1))$ of \eqref{eqqgfrse} with \(F_2 \equiv F_3 \equiv F_4 \equiv0\) satisfies that
\begin{align}\label{carfor5.6se2}
\begin{aligned}
\mathbb{E}\iint_Q \theta^2\Big[\lambda^7\gamma^7|z|^2 +\lambda^5\gamma^5|z_x|^2 +\lambda^3\gamma^3|z_{xx}|^2\Big] \,dx \,dt\leq C\,\mathbb{E}\iint_Q \theta^2\Big[\lambda^7\gamma^7 |z|^2\chi_{\mathcal{B}} +|F_1|^2\Big] \,dx\,dt,
\end{aligned}\end{align}
for any $\lambda\geq\lambda_0(T+T^2)$.
\end{lm}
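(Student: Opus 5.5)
The plan is to reduce the statement to the deterministic Carleman estimate of \cite[Theorem 1.1]{zhouZ12} by working pathwise. When $F_2\equiv F_3\equiv F_4\equiv 0$, equation \eqref{eqqgfrse} has no diffusion term: it reads $dz+\eta z_{xxxx}\,dt=F_1\,dt$. So for $\mathbb{P}$-a.e. $\omega\in\Omega$ the trajectory $z(\cdot,\cdot,\omega)$ is the weak solution of the deterministic problem
\[
z_t+\eta z_{xxxx}=F_1(\cdot,\cdot,\omega)\ \text{in } Q,\qquad z=z_x=0 \text{ on } x\in\{0,1\},\qquad z(0)=z_0(\omega).
\]
Since $\mathbb{E}\iint_Q|F_1|^2<\infty$ and $\mathbb{E}\|z_0\|^2_{L^2(0,1)}<\infty$, we have $F_1(\omega)\in L^2(Q)$ and $z_0(\omega)\in L^2(0,1)$ for a.e. $\omega$. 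The membership $z\in L^2_{\mathcal F}(\Omega;C([0,T];L^2(0,1)))\cap L^2_{\mathcal F}(0,T;H^2_0(0,1))$ gives $z(\omega)\in C([0,T];L^2(0,1))\cap L^2(0,T;H^2_0(0,1))$ almost surely. Hence, once a deterministic estimate with constants independent of $\omega$ is available, integrating it in $\omega$ (all integrands are jointly measurable) gives \eqref{carfor5.6se2}.

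To obtain the deterministic estimate for the operator $\partial_t+\eta\partial_x^4$, I will rescale time. Set $s=\eta t$, $w(s,x)=z(s/\eta,x)$ and $\tilde f(s,x)=\eta^{-1}F_1(s/\eta,x)$ on $\tilde Q=(0,\eta T)\times(0,1)$. Then $w_s+w_{xxxx}=\tilde f$ with the same boundary conditions. The weight transforms cleanly:
\[
\gamma(t)=\frac{1}{t(T-t)}=\frac{\eta^2}{s(\eta T-s)}=\eta^2\tilde\gamma(s),
\]
where $\tilde\gamma$ is the weight of \eqref{weightfunct} built on the horizon $\eta T$. Consequently $\lambda\alpha=\tilde\lambda\varphi\tilde\gamma$ with $\tilde\lambda=\eta^2\lambda$, so $\theta=\exp(-\tilde\lambda\varphi\tilde\gamma)$ is exactly the weight of the rescaled problem. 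Moreover $\lambda\gamma=\tilde\lambda\tilde\gamma$, so every term $\lambda^k\gamma^k$ in \eqref{carfor5.6se2} equals $\tilde\lambda^k\tilde\gamma^k$.

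Next I apply \cite[Theorem 1.1]{zhouZ12} to $w$ on $\tilde Q$, with the same $\varphi$ (same $\kappa$ and $\mathcal B$) and $\mu=\mu_0$ as given there. This yields
\[
\iint_{\tilde Q}\theta^2\big[\tilde\lambda^7\tilde\gamma^7|w|^2+\tilde\lambda^5\tilde\gamma^5|w_x|^2+\tilde\lambda^3\tilde\gamma^3|w_{xx}|^2\big]\le C\iint_{\tilde Q}\theta^2\big[\tilde\lambda^7\tilde\gamma^7|w|^2\chi_{\mathcal B}+|\tilde f|^2\big]
\]
for all $\tilde\lambda\ge\tilde\lambda_0(\eta T+\eta^2T^2)$. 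Changing variables back with $ds=\eta\,dt$ and $|\tilde f|^2=\eta^{-2}|F_1|^2$, the factors of $\eta$ are absorbed into $C$. Because $\eta^2\lambda\ge\tilde\lambda_0\max(\eta,\eta^2)(T+T^2)$ whenever $\lambda\ge\lambda_0(T+T^2)$ with $\lambda_0:=\tilde\lambda_0\max(1,\eta^{-1})$, the admissible range of $\tilde\lambda$ is covered. Taking expectation of the pathwise inequality then finishes the proof.

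The main obstacle is bookkeeping rather than analysis. First, I must check that the weight in \cite{zhouZ12} is exactly of the form $\exp(-\lambda\varphi\gamma)$ with $\varphi=e^{5\mu}-e^{\mu(\kappa+3)}$ and the stated powers and admissible range for $\lambda$; if its exponents in $\mu$ differ, I would redo the rescaling with that form, since the identity $\lambda\gamma=\tilde\lambda\tilde\gamma$ is what matters. Second, that result is typically proved for smooth solutions, so I will extend it to weak solutions with $z_0\in L^2$ by density. I will approximate $z_0$ and $F_1$ by smooth data, use the well-posedness estimate to pass to the limit on $(\delta,T-\delta)$, where $\theta$ is bounded, and then let $\delta\to0$ by monotone convergence. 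This uses that $\theta^2\gamma^k$ vanishes at $t=0,T$.
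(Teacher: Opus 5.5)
Your proposal is correct and follows the same route as the paper, which invokes \cite[Theorem 1.1]{zhouZ12} for $\partial_t+\partial_x^4$ and rescales time via $t\mapsto\eta t$. Your pathwise reduction (legitimate because $F_4\equiv0$ leaves a random PDE with no It\^o term), the computation $\gamma=\eta^2\tilde\gamma$, $\tilde\lambda=\eta^2\lambda$, and the density remark simply make explicit what the paper leaves implicit; the remaining caveat, which you already flag and which the paper shares, is that the cited weight must coincide with $\exp\bigl(-\lambda(e^{5\mu}-e^{\mu(\kappa+3)})\gamma\bigr)$, with $C$ and $\lambda_0$ also depending harmlessly on the fixed constant $\eta$.
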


The proof of the main Carleman estimate \eqref{carfor5.6} is based on the duality method. 
To this end, we introduce the following controlled backward stochastic fourth-order parabolic equation:
\begin{equation}\label{eqqgfrsecon}
\begin{cases}
\begin{array}{ll}
dy - \eta\,y_{xxxx} \, dt = \left[\lambda^7  \theta^2 \gamma^7 z + f\chi_{\mathcal{B}}  \right] \, dt + Y \, dW(t),&\quad (t,x)\in Q,\\
y(t,0)=y(t,1)=0,&\quad t\in(0,T),\\
y_x(t,0)=y_x(t,1)=0,&\quad t\in(0,T),\\
y(T,x)=0,&\quad x\in(0,1),
\end{array}
\end{cases}
\end{equation}
where $(y,Y)$ are the state variables, $f$ is the control variable, $z$ is the solution to equation \eqref{eqqgfrse}, and $\theta$ and $\gamma$ are the weight functions defined in \eqref{weightfunct}. Throughout this subsection, we take $\mu=\mu_0$ as given in Lemma \ref{lemma5.1}. Then we have the following controllability result for \eqref{eqqgfrsecon}.
\begin{prop}\label{prop2.4}
There exists a control $\widehat{f} \in L^2_\mathcal{F}(0,T;L^2(\mathcal{B}))$ such that the associated solution $(\widehat{y},\widehat{Y})$ of \eqref{eqqgfrsecon} satisfies $\widehat{y}(0,\cdot)=0$ in $(0,1)$, a.s. Moreover, there exists a constant $C>0$ such that, for $\mu=C$ and for all $\lambda \ge C(T+T^2)$,
\begin{align}\label{2.2ineq}
\begin{aligned}
&\mathbb{E}\iint_Q \theta^{-2}\Big[\lambda^{-7}\gamma^{-7} |\widehat{f}|^2 +|\widehat{y}|^2+\lambda^{-2}\gamma^{-2}|\widehat{y}_x|^2+\lambda^{-4}\gamma^{-4} |\widehat{y}_{xx}|^2+\lambda^{-4}\gamma^{-4} |\widehat{Y}|^2\Big] \,dx\,dt\\
&\quad\leq C\;\lambda^7\mathbb{E}\iint_Q \theta^2\gamma^7 |z|^2\,dx\,dt.
\end{aligned}
\end{align}
\end{prop}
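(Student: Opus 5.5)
We will use the classical penalized optimal control (Fursikov–Imanuvilov / Liu-type duality) argument. For $\varepsilon>0$ we consider the functional
\[
\mathcal{J}_\varepsilon(f)=\frac12\,\mathbb{E}\iint_Q \theta^{-2}\lambda^{-7}\gamma^{-7}|f|^2\chi_{\mathcal{B}}\,dx\,dt+\frac12\,\mathbb{E}\iint_Q\theta^{-2}|y|^2\,dx\,dt+\frac{1}{2\varepsilon}\,\mathbb{E}\|y(0)\|^2_{L^2(0,1)},
\]
where $(y,Y)$ solves \eqref{eqqgfrsecon} with control $f$. Since $\theta^{-2}$ vanishes at $t=0,T$ while $\theta^{2}\gamma^7$ is bounded, the source $\lambda^7\theta^2\gamma^7 z$ lies in $L^2_\mathcal{F}(0,T;L^2(0,1))$. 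Hence \eqref{eqqgfrsecon} is well posed, and $\mathcal{J}_\varepsilon$ is coercive and strictly convex on a weighted $L^2$ space. It therefore has a unique minimizer $f_\varepsilon$, with state $(y_\varepsilon,Y_\varepsilon)$.

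The Euler–Lagrange conditions give $f_\varepsilon=-\lambda^7\theta^2\gamma^7 r_\varepsilon\chi_{\mathcal{B}}$. Here $r_\varepsilon$ solves the forward equation
\[
dr_\varepsilon+\eta r_{\varepsilon,xxxx}\,dt=\theta^{-2}y_\varepsilon\,dt,\qquad r_\varepsilon(0)=\varepsilon^{-1}y_\varepsilon(0),
\]
with clamped boundary conditions. This is exactly \eqref{eqqgfrse} with $F_2=F_3=F_4=0$ and $F_1=\theta^{-2}y_\varepsilon$.

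Applying Itô's formula to $y_\varepsilon r_\varepsilon$ and using the optimality system, we get
\[
\mathbb{E}\iint_Q\big(\theta^{-2}|y_\varepsilon|^2+\lambda^{-7}\theta^{-2}\gamma^{-7}|f_\varepsilon|^2\big)+\varepsilon^{-1}\mathbb{E}\|y_\varepsilon(0)\|^2=-\mathbb{E}\iint_Q\lambda^7\theta^2\gamma^7 z\,r_\varepsilon.
\]
We bound the right-hand side by Cauchy–Schwarz, by $\big(\lambda^7\mathbb{E}\iint\theta^2\gamma^7|z|^2\big)^{1/2}\big(\lambda^7\mathbb{E}\iint\theta^2\gamma^7|r_\varepsilon|^2\big)^{1/2}$. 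Lemma \ref{lemma5.1} applied to $r_\varepsilon$ then gives
\[
\lambda^7\mathbb{E}\iint\theta^2\gamma^7|r_\varepsilon|^2\le C\Big(\lambda^7\mathbb{E}\iint_{\mathcal B}\theta^2\gamma^7|r_\varepsilon|^2+\mathbb{E}\iint\theta^{-2}|y_\varepsilon|^2\Big),
\]
and the right-hand side is precisely $C$ times the left-hand side of the identity. Absorbing yields uniform-in-$\varepsilon$ bounds on $\mathbb{E}\iint\theta^{-2}(\lambda^{-7}\gamma^{-7}|f_\varepsilon|^2+|y_\varepsilon|^2)$ and on $\varepsilon^{-1}\mathbb{E}\|y_\varepsilon(0)\|^2$, by $C\lambda^7\mathbb{E}\iint\theta^2\gamma^7|z|^2$.

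Next we recover the weighted bounds on $y_{\varepsilon,x}$, $y_{\varepsilon,xx}$ and $Y_\varepsilon$. We apply Itô's formula to $\lambda^{-4}\theta^{-2}\gamma^{-4}|y_\varepsilon|^2$, or equivalently to $\zeta y_\varepsilon$ with $\zeta=\lambda^{-2}\theta^{-1}\gamma^{-2}$, and integrate by parts twice in $x$. The term $\eta y_{xxxx}$ produces $\lambda^{-4}\theta^{-2}\gamma^{-4}|y_{\varepsilon,xx}|^2$ together with lower-order terms involving derivatives of the weight. The Itô correction produces $\lambda^{-4}\theta^{-2}\gamma^{-4}|Y_\varepsilon|^2$. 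The time derivative of the weight is controlled by $|\partial_t(\theta^{-2}\gamma^{-4})|\le C\lambda T\theta^{-2}\gamma^{-2}$, using \eqref{aligned123}, which is absorbed by $\theta^{-2}|y_\varepsilon|^2$ since $\lambda^{-3}T\gamma^{-2}\le C$ for $\lambda\ge C(T+T^2)$. The spatial derivatives of the weight give terms like $\lambda^{-4}\theta^{-2}\gamma^{-4}(\lambda\gamma)^{j}|\partial_x^{2-j}y||\partial_x^2 y|$; these are handled by Young's inequality and the interpolation bound $\lambda^{-2}\gamma^{-2}|y_x|^2\lesssim \lambda^{-4}\gamma^{-4}|y_{xx}|^2+|y|^2$ in weighted form. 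The source terms give $\lambda^{-4}\gamma^{-4}\theta^{-2}|y||f+\lambda^7\theta^2\gamma^7z|$, which is bounded by the already controlled quantities since $\lambda^{-4}\gamma^{-4}\cdot\lambda^7\gamma^7\le \lambda^{7}\gamma^{7}\cdot\lambda^{-7}\gamma^{-7}$ up to powers we can balance. The boundary terms at $t=0$ and $t=T$ vanish because of the weight and $y(T)=0$.

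Finally, we pass to weak limits as $\varepsilon\to0$ to obtain $(\widehat f,\widehat y,\widehat Y)$ satisfying \eqref{2.2ineq}. Since $\mathbb{E}\|y_\varepsilon(0)\|^2\le C\varepsilon\to0$, we get $\widehat y(0)=0$ a.s. The main obstacle is the derivative/$Y$ energy estimate: the careful bookkeeping of weight derivatives in the fourth-order integration by parts, and choosing the powers $\lambda^{-2}\gamma^{-2}$ and $\lambda^{-4}\gamma^{-4}$ so that every commutator is absorbed.
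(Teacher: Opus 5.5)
Your overall scheme (penalized optimal control, Euler--Lagrange characterization through a forward adjoint $r_\varepsilon$, absorption via Lemma~\ref{lemma5.1}, a weighted It\^o estimate for $y_{xx}$ and $Y$, weak limits) matches the paper's. However, the first step has a genuine gap, caused by the weight you put in the state term.

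You claim that $\theta^{-2}$ vanishes at $t=0,T$. In fact $\alpha=\varphi\gamma$ with $\varphi>0$, so $\theta^{-2}=e^{2\lambda\alpha}\to+\infty$ as $t\to0^+$ and as $t\to T^-$. Hence $\mathbb{E}\iint_Q\theta^{-2}|y|^2\,dx\,dt<\infty$ forces $y$ to vanish super-exponentially fast as $t\to0$. In particular, $\mathcal{J}_\varepsilon(0)=+\infty$ as soon as the uncontrolled solution has $y(0)\neq0$. So your $\mathcal{J}_\varepsilon$ is finite only on controls that already achieve weighted null controllability at $t=0$.

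The existence of such a control is exactly what is to be proved. Your step ``coercive and strictly convex, hence a unique minimizer'' is therefore circular: the functional might be identically $+\infty$, in which case the penalty $\frac{1}{2\varepsilon}\mathbb{E}\|y(0)\|^2$ does nothing. The same misconception affects later steps:
\begin{itemize}
\item the source $F_1=\theta^{-2}y_\varepsilon$ of the $r_\varepsilon$-equation is not known to lie in $L^2_\mathcal{F}(0,T;L^2(0,1))$, so Lemma~\ref{lemma5.1} cannot be invoked as stated;
\item in the It\^o computation for $\lambda^{-4}\theta^{-2}\gamma^{-4}|y_\varepsilon|^2$, the boundary terms do not vanish ``because of the weight'', since $\theta^{-2}\gamma^{-4}$ blows up at both endpoints.
\end{itemize}

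The missing ingredient is the regularized weight $\theta_\varepsilon=e^{-\lambda\alpha_\varepsilon}$, with $\alpha_\varepsilon=\varphi(x)[(t+\varepsilon)(T-t+\varepsilon)]^{-1}$. The paper uses it in \eqref{2.03} in the state term only; the control term keeps $\theta^{-2}\gamma^{-7}$, for which $f=0$ is admissible. With this change, each of your steps goes through:
\begin{itemize}
\item for fixed $\varepsilon$, $\theta_\varepsilon^{-2}$ is bounded, so $\mathcal{J}_\varepsilon(0)<\infty$ and the direct method applies;
\item $r_\varepsilon$ has the $L^2$ source $\theta_\varepsilon^{-2}y_\varepsilon$, and since $\alpha_\varepsilon\le\alpha$ one has $\theta^2|\theta_\varepsilon^{-2}y_\varepsilon|^2\le\theta_\varepsilon^{-2}|y_\varepsilon|^2$, which is exactly what makes your absorption step close;
\item the energy estimate is done with the weight $\lambda^{-4}\theta_\varepsilon^{-2}\gamma^{-4}$, which does vanish at $t=0,T$ thanks to the factor $\gamma^{-4}$;
\item finally $\theta_\varepsilon^{-2}\uparrow\theta^{-2}$ as $\varepsilon\downarrow0$, so weak lower semicontinuity plus monotone convergence transfer the uniform bounds to $(\widehat f,\widehat y,\widehat Y)$ with the weight $\theta^{-2}$ required in \eqref{2.2ineq}.
\end{itemize}

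There is also a minor sign slip. With $r_\varepsilon(0)=\varepsilon^{-1}y_\varepsilon(0)$ and source $+\theta^{-2}y_\varepsilon$, the optimality condition is $f_\varepsilon=+\lambda^7\theta^2\gamma^7 r_\varepsilon\chi_{\mathcal B}$. The energy identity you write corresponds to this sign, not to the minus sign you state.
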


\begin{proof}
For any $\varepsilon > 0$, define $\theta_\varepsilon = e^{-\lambda \alpha_\varepsilon}$, where $\alpha_\varepsilon \equiv \alpha_\varepsilon(t, x) = \varphi(x)\,[(t + \varepsilon)(T - t + \varepsilon)]^{-1}$. We then consider the following optimal control problem:
\begin{equation}\label{2.03}
    \inf_{f \in \mathcal{H}} \,\frac{1}{2} \mathbb{E}\bigg\{ \int_0^T\int_{\mathcal{B}} \lambda^{-7} \theta^{-2} \gamma^{-7} |f|^2 \, dx \, dt +  \iint_Q \theta^{-2}_\varepsilon |y|^2 \, dx \, dt + \frac{1}{\varepsilon}  \int_0^1 |y(0,x)|^2 \, dx\bigg\},
\end{equation}
subject to \eqref{eqqgfrsecon}, where
\begin{align*}
\mathcal{H} = \bigg\{ f \in L^2_\mathcal{F}(0, T; L^2(\mathcal{B})) : \quad\mathbb{E}\int_0^T\int_{\mathcal{B}} \theta^{-2} \gamma^{-7} |f|^2 \, dx \, dt< \infty \bigg\}.
\end{align*}
It is easy to verify that problem \eqref{2.03} admits a unique optimal solution \(f^\varepsilon \in \mathcal{H}\), which is characterized by
\begin{equation}\label{2.003}
 f^\varepsilon = \lambda^7 \theta^2 \gamma^7 r^\varepsilon \chi_{\mathcal{B}} \quad\textnormal{in}\;Q,\quad \textnormal{a.s},
\end{equation}
where $r^\varepsilon$ denotes the solution of the random fourth-order parabolic equation
\begin{equation}\label{2.04}
\begin{cases}
\begin{array}{ll}
dr^\varepsilon + \eta\,r^\varepsilon_{xxxx} \, dt = \theta_\varepsilon^{-2} y^\varepsilon \, dt,&\quad (t,x)\in Q,\\
r^\varepsilon(t,0)=r^\varepsilon(t,1)=0,&\quad t\in(0,T),\\
r^\varepsilon_x(t,0)=r^\varepsilon_x(t,1)=0,&\quad t\in(0,T),\\
r^\varepsilon(0,x)=\frac{1}{\varepsilon} y^\varepsilon(0,x),&\quad x\in(0,1),
\end{array}
\end{cases}
\end{equation}
where \( (y^\varepsilon,Y^\varepsilon) \) is the solution of \eqref{eqqgfrsecon} corresponding to the control \( f^\varepsilon \). 

We establish a uniform estimate for the family of optimal solutions $\{(f^\varepsilon, y^\varepsilon, Y^\varepsilon)\}_{\varepsilon>0}$. Using \eqref{2.003}, \eqref{carfor5.6se2}, and Itô's formula, we then obtain, for $\lambda \ge C(T+T^2)$,
\begin{align*}
\begin{aligned}
& \frac{1}{\varepsilon} \mathbb{E} \int_0^1 |y^\varepsilon(0,x)|^2 \, dx + \lambda^7 \mathbb{E} \int_0^T\int_{\mathcal{B}} \theta^2 \gamma^7 |r^\varepsilon|^2 \, dx \, dt  + \mathbb{E} \iint_Q \theta_\varepsilon^{-2} |y^\varepsilon|^2 \, dx \, dt \\
& \leq \rho\bigg[\lambda^7 \mathbb{E} \int_0^T\int_{\mathcal{B}} \theta^2 \gamma^7 |r^\varepsilon|^2 \, dx \, dt  + \mathbb{E} \iint_Q \theta_\varepsilon^{-2} |y^\varepsilon|^2 \, dx \, dt\bigg]+C(\rho)\lambda^7 \mathbb{E} \iint_Q \theta^2 \gamma^7 |z|^2 \, dx \, dt.
\end{aligned}
\end{align*}
This implies that 
\begin{align}\label{firstesesp}
\begin{aligned}
\frac{1}{\varepsilon} \mathbb{E} \int_0^1 |y^\varepsilon(0,x)|^2 \, dx + \lambda^{-7} \mathbb{E} \iint_Q\theta^{-2} \gamma^{-7} |f^\varepsilon|^2 \, dx \, dt  + \mathbb{E} \iint_Q \theta_\varepsilon^{-2} |y^\varepsilon|^2 \, dx \, dt \leq C\lambda^7 \mathbb{E} \iint_Q \theta^2 \gamma^7 |z|^2 \, dx \, dt.
\end{aligned}
\end{align}
On the other hand, by computing $d\big(\lambda^{-4}\theta_\varepsilon^{-2}\gamma^{-4}|y^\varepsilon|^2\big)$ and using the fact that, for any $\lambda \ge C(T+T^2)$, 
\begin{align*}
    |(\theta_\varepsilon^{-2}\gamma^{-4})_t|\leq C\lambda ^2\theta_\varepsilon^{-2}\gamma^{-2},\quad |(\theta_\varepsilon^{-2}\gamma^{-4})_x|\leq C\lambda\theta_\varepsilon^{-2}\gamma^{-3},\quad|(\theta_\varepsilon^{-2}\gamma^{-4})_{xx}|\leq C\lambda^2\theta_\varepsilon^{-2}\gamma^{-2},
\end{align*}
we find that
\begin{align*}
&\lambda^{-4}\mathbb{E}\iint_Q \theta_\varepsilon^{-2}\gamma^{-4}|Y^\varepsilon|^2 \,dx \,dt+2\lambda^{-4}\mathbb{E}\iint_Q \theta_\varepsilon^{-2}\gamma^{-4} |y^\varepsilon_{xx}|^2 \,dx \,dt\\
&\leq C\lambda^{-2}\mathbb{E}\iint_Q \theta_\varepsilon^{-2}\gamma^{-2}|y^\varepsilon|^2 \,dx \,dt+C\lambda^{-2}\mathbb{E}\iint_Q \theta_\varepsilon^{-2}\gamma^{-2} |y^\varepsilon| |y^\varepsilon_{xx}| \,dx \,dt+C\lambda^{-3}\mathbb{E}\iint_Q \theta_\varepsilon^{-2}\gamma^{-3} |y^\varepsilon_x| |y^\varepsilon_{xx}| \,dx \,dt \\
&\hspace{0.5cm}+C\lambda^3\mathbb{E}\iint_Q \theta_\varepsilon^{-2}\theta^2\gamma^3 |y^\varepsilon| |z| \,dx \,dt +C\lambda^{-4}\mathbb{E}\int_0^T\int_\mathcal{B} \theta_\varepsilon^{-2}\gamma^{-4}|y^\varepsilon| |f^\varepsilon| \,dx \,dt.
\end{align*}
Using Young's inequality and \eqref{firstesesp}, it follows that, for any $\lambda \ge C(T+T^2)$,
\begin{align}\label{estifseespsec}
\begin{aligned}
&\lambda^{-4}\mathbb{E}\iint_Q \theta_\varepsilon^{-2}\gamma^{-4}|Y^\varepsilon|^2 \,dx \,dt+\lambda^{-4}\mathbb{E}\iint_Q \theta_\varepsilon^{-2}\gamma^{-4} |y^\varepsilon_{xx}|^2 \,dx \,dt\\
&\leq \,\widetilde{C}\lambda^{-2}\mathbb{E}\iint_Q \theta_\varepsilon^{-2}\gamma^{-2}|y^\varepsilon_x|^2 \,dx \,dt+C\lambda^{7}\mathbb{E}\iint_Q \theta^2\gamma^{7}|z|^2 \,dx \,dt.
\end{aligned}
\end{align}
By integration by parts, we have that
$$\,\widetilde{C}\lambda^{-2}\mathbb{E}\iint_Q \theta_\varepsilon^{-2}\gamma^{-2}|y^\varepsilon_x|^2 \,dx \,dt=-\,\widetilde{C}\lambda^{-2}\mathbb{E}\iint_Q \Big[(\theta_\varepsilon^{-2}\gamma^{-2})_x y^\varepsilon_x+\theta_\varepsilon^{-2}\gamma^{-2} y^\varepsilon_{xx}\Big] y^\varepsilon \,dx \,dt.$$
Observe that $|(\theta_\varepsilon^{-2}\gamma^{-2})_x| \le C \lambda \theta_\varepsilon^{-2} \gamma^{-1}$.  It then follows that, for any $\rho > 0$,
\begin{align*}
\,\widetilde{C}\lambda^{-2}\mathbb{E}\iint_Q \theta_\varepsilon^{-2}\gamma^{-2}|y^\varepsilon_x|^2 \,dx \,dt\leq&\, \rho\,\widetilde{C}\lambda^{-2}\mathbb{E}\iint_Q \theta_\varepsilon^{-2}\gamma^{-2}|y^\varepsilon_x|^2 \,dx \,dt+C(\rho)\mathbb{E}\iint_Q \theta_\varepsilon^{-2} |y^\varepsilon|^2 \,dx \,dt\\
&+\frac{1}{4}\lambda^{-4}\mathbb{E}\iint_Q \theta_\varepsilon^{-2}\gamma^{-4} |y^\varepsilon_{xx}|^2 \,dx \,dt.
\end{align*}
Setting $\rho = 1/2$, we obtain
\begin{align}\label{estofporzxn}
\,\widetilde{C}\lambda^{-2}\mathbb{E}\iint_Q \theta_\varepsilon^{-2}\gamma^{-2}|y^\varepsilon_x|^2 \,dx \,dt\leq& \,C \,\mathbb{E}\iint_Q \theta_\varepsilon^{-2} |y^\varepsilon|^2 \,dx \,dt+\frac{1}{2}\lambda^{-4}\mathbb{E}\iint_Q \theta_\varepsilon^{-2}\gamma^{-4} |y^\varepsilon_{xx}|^2 \,dx \,dt.
\end{align}
Combining \eqref{estofporzxn}, \eqref{estifseespsec}, and  \eqref{firstesesp}, we deduce that
\begin{align}\label{unifirstesespsec}
\begin{aligned}
& \frac{1}{\varepsilon} \mathbb{E} \int_0^1 |y^\varepsilon(0,x)|^2 \, dx + \lambda^{-7} \mathbb{E} \iint_Q \theta^{-2} \gamma^{-7} |f^\varepsilon|^2 \, dx \, dt + \mathbb{E} \iint_Q \theta_\varepsilon^{-2} |y^\varepsilon|^2 \, dx \, dt\\
&+\lambda^{-2}\mathbb{E}\iint_Q \theta_\varepsilon^{-2}\gamma^{-2}|y^\varepsilon_x|^2 \,dx \,dt+\lambda^{-4}\mathbb{E}\iint_Q \theta_\varepsilon^{-2}\gamma^{-4} |y^\varepsilon_{xx}|^2 \,dx \,dt+\lambda^{-4}\mathbb{E}\iint_Q \theta_\varepsilon^{-2}\gamma^{-4}|Y^\varepsilon|^2 \,dx \,dt \\
& \quad\leq C\lambda^7 \mathbb{E} \iint_Q \theta^2 \gamma^7 |z|^2 \, dx \, dt.
\end{aligned}
\end{align}
It follows that there exists  $(\widehat{f}, \widehat{y}, \widehat{Y}) \in L^2_\mathcal{F}(0,T;L^2(\mathcal{B})) \times L^2_\mathcal{F}(0,T;H^2_0(0,1))\times L^2_\mathcal{F}(0,T;L^2(0,1)),$ such that as \( \varepsilon \to 0 \),
\begin{align}\label{2.020}
\begin{aligned}
& f^\varepsilon \rightharpoonup \widehat{f}, \quad \textnormal{in} \quad L^2((0,T) \times \Omega; L^2(\mathcal{B})); \\
& y^\varepsilon \rightharpoonup \widehat{y}, \quad \textnormal{in} \quad L^2((0,T) \times \Omega; H^2_0(0,1));\\
& Y^\varepsilon \rightharpoonup \widehat{Y}, \quad \textnormal{in} \quad L^2((0,T) \times \Omega; L^2(0,1)).
\end{aligned}
\end{align}
It is straightforward to verify that $(\widehat{y},\widehat{Y})$ is the solution of \eqref{eqqgfrsecon} corresponding to the control $\widehat{f}$.  
Combining \eqref{unifirstesespsec} with \eqref{2.020} then establishes the null controllability of \eqref{eqqgfrsecon} together with the estimate \eqref{2.2ineq}.
\end{proof}

We now provide the proof of Theorem \ref{themmain5.1}.
\begin{proof}[Proof of Theorem \ref{themmain5.1}]
Let $z$ be the solution of \eqref{eqqgfrse}, and let $(\widehat{y},\widehat{Y})$ be the solution of \eqref{eqqgfrsecon} associated with the control $\widehat{f}$, as obtained in Proposition \ref{prop2.4}.  
Applying Itô's formula then gives
$$\lambda^7\mathbb{E}\iint_Q \theta^2\gamma^7 |z|^2 \,dx\,dt =-\mathbb{E}\iint_Q (F_1\widehat{y}-F_2\widehat{y}_x+F_3\widehat{y}_{xx}+ z\widehat{f}\chi_\mathcal{B}+F_4\widehat{Y}) \,dx\,dt.$$
Hence, for any $\varepsilon > 0$,
\begin{align*}
\begin{aligned}
\lambda^7\mathbb{E}\iint_Q \theta^2\gamma^7 |z|^2 \,dx\,dt\leq&\,\varepsilon\,\mathbb{E}\iint_Q \theta^{-2}\Big[\lambda^{-7}\gamma^{-7} |\widehat{f}|^2 +|\widehat{y}|^2+ \lambda^{-2}\gamma^{-2}|\widehat{y}_x|^2+\lambda^{-4}\gamma^{-4} |\widehat{y}_{xx}|^2+\lambda^{-4}\gamma^{-4} |\widehat{Y}|^2\Big] \,dx\,dt\\
&+\frac{C}{\varepsilon}\,\mathbb{E}\iint_Q \theta^{2}\Big[\lambda^{7}\gamma^{7} |z|^2\chi_{\mathcal{B}} +|F_1|^2 + \lambda^{2}\gamma^{2}|F_2|^2 +\lambda^{4}\gamma^{4} |F_3|^2 +\lambda^{4}\gamma^{4} |F_4|^2 \Big]\,dx\,dt.
    \end{aligned}
\end{align*}
By the estimate \eqref{2.2ineq}, it follows that, for any $\lambda \ge C(T+T^2)$,
\begin{align}\label{firstineforz}
\begin{aligned}
\lambda^7\mathbb{E}\iint_Q \theta^2\gamma^7 |z|^2 \,dx\,dt\leq C\,\mathbb{E}\iint_Q \theta^{2}\Big[\lambda^{7}\gamma^{7} |z|^2\chi_{\mathcal{B}} +|F_1|^2 + \lambda^{2}\gamma^{2}|F_2|^2 +\lambda^{4}\gamma^{4} |F_3|^2 +\lambda^{4}\gamma^{4} |F_4|^2 \Big]\,dx\,dt.
\end{aligned}
\end{align}
On the other hand, applying Itô's formula to $d(\lambda^3 \theta^2 \gamma^3 z^2)$ and using the fact that, for any $\lambda \ge C(T+T^2)$,
\begin{align*}
    |(\theta^2\gamma^3)_t|\leq C\lambda^2\theta^2\gamma^5,\quad |(\theta^2\gamma^3)_x|\leq C\lambda\theta^2\gamma^4,\quad |(\theta^2\gamma^3)_{xx}|\leq C\lambda^2\theta^2\gamma^5,
\end{align*}
we obtain
\begin{align*}
    \begin{aligned}
 2\lambda^3\mathbb{E}\iint_Q \theta^2\gamma^3 |z_{xx}|^2 \,dx\,dt\leq&\,C\lambda^5\mathbb{E}\iint_Q \theta^2\gamma^5 |z|^2 \,dx\,dt+C\lambda^5\mathbb{E}\iint_Q \theta^2\gamma^5 |z| |z_{xx}| \,dx\,dt\\
 &+C\lambda^4\mathbb{E}\iint_Q \theta^2\gamma^4 |z_x| |z_{xx}| \,dx\,dt+2\lambda^3\mathbb{E}\iint_Q \theta^2\gamma^3 |z| |F_1| \,dx\,dt\\
 &+\lambda^3\mathbb{E}\iint_Q \theta^2\gamma^3 |F_4|^2 \,dx\,dt+C\lambda^4\mathbb{E}\iint_Q \theta^2\gamma^4 |z| |F_2| \,dx\,dt\\
 &+C\lambda^3\mathbb{E}\iint_Q \theta^2\gamma^3 |z_x| |F_2| \,dx\,dt+C\lambda^5\mathbb{E}\iint_Q \theta^2\gamma^5 |z| |F_3| \,dx\,dt\\
 &+C\lambda^4\mathbb{E}\iint_Q \theta^2\gamma^4 |z_x| |F_3| \,dx\,dt+C\lambda^3\mathbb{E}\iint_Q \theta^2\gamma^3 |z_{xx}| |F_3| \,dx\,dt.
    \end{aligned}
\end{align*}
Then, for sufficiently large $\lambda \ge C(T+T^2)$,  we get
\begin{align}\label{ineqq511}
    \begin{aligned}
 \lambda^3\mathbb{E}\iint_Q \theta^2\gamma^3 |z_{xx}|^2 \,dx\,dt\leq&\,C\lambda^7\mathbb{E}\iint_Q \theta^2\gamma^7 |z|^2 \,dx\,dt+C\lambda^5\mathbb{E}\iint_Q \theta^2\gamma^5 |z_x|^2 \,dx\,dt\\
 &+C \,\mathbb{E}\iint_Q \theta^2(|F_1|^2+|F_2|^2) \,dx\,dt+C\lambda^3\mathbb{E}\iint_Q \theta^2\gamma^3 (|F_3|^2+|F_4|^2) \,dx\,dt.
    \end{aligned}
\end{align}
Combining \eqref{ineqq511} and \eqref{firstineforz}, and taking $\lambda \ge C(T+T^2)$ sufficiently large, one obtains
\begin{align}\label{ineqq511sec}
    \begin{aligned}
 \lambda^3\mathbb{E}\iint_Q \theta^2\gamma^3 |z_{xx}|^2 \,dx\,dt\leq\,&C\,\mathbb{E}\iint_Q \theta^{2}\Big[\lambda^{7}\gamma^{7} |z|^2\chi_{\mathcal{B}} +|F_1|^2 + \lambda^{2}\gamma^{2}|F_2|^2 +\lambda^{4}\gamma^{4} |F_3|^2 +\lambda^{4}\gamma^{4} |F_4|^2 \Big]\,dx\,dt\\
&+\widetilde{C}\lambda^5\mathbb{E}\iint_Q \theta^2\gamma^5 |z_x|^2 \,dx\,dt.
    \end{aligned}
\end{align}
For the term involving $z_x$, we have that
\begin{align*}
\widetilde{C}\lambda^5\mathbb{E}\iint_Q \theta^2\gamma^5 |z_x|^2 \,dx\,dt=-\,\widetilde{C}\lambda^5\mathbb{E}\iint_Q (\theta^2\gamma^5)_x z_x z\, \,dx\,dt-\,\widetilde{C}\lambda^5\mathbb{E}\iint_Q \theta^2\gamma^5 z_{xx} z\, \,dx\,dt.
\end{align*}
Note that $|(\theta^2 \gamma^5)_x| \le C \lambda \theta^2 \gamma^6$.  
Thus, for any $\varepsilon > 0$,
\begin{align*}
\,\widetilde{C}\lambda^5\mathbb{E}\iint_Q \theta^2\gamma^5 |z_x|^2 \,dx\,dt\leq &\,\varepsilon\,\widetilde{C}\lambda^5\mathbb{E}\iint_Q \theta^2\gamma^5 |z_x|^2 \,dx\,dt+\frac{C}{\varepsilon}\lambda^7\mathbb{E}\iint_Q \theta^2\gamma^7 |z|^2\, \,dx\,dt\\
&+\frac{1}{4}\lambda^3\mathbb{E}\iint_Q \theta^2\gamma^3 |z_{xx}|^2 \,dx\,dt.
\end{align*}
Choosing $\varepsilon = 1/2$ and combining the resulting inequality with \eqref{ineqq511sec}, we conclude that
\begin{align}\label{ineqq511sec22}
    \begin{aligned}
&\lambda^5\mathbb{E}\iint_Q \theta^2\gamma^5 |z_x|^2 \,dx\,dt+\lambda^3\mathbb{E}\iint_Q \theta^2\gamma^3 |z_{xx}|^2 \,dx\,dt\\
&\leq C\,\mathbb{E}\iint_Q \theta^{2}\Big[\lambda^{7}\gamma^{7} |z|^2\chi_{\mathcal{B}} +|F_1|^2 + \lambda^{2}\gamma^{2}|F_2|^2 +\lambda^{4}\gamma^{4} |F_3|^2 +\lambda^{4}\gamma^{4} |F_4|^2 \Big]\,dx\,dt.
    \end{aligned}
\end{align}
Finally, combining \eqref{ineqq511sec22} and \eqref{firstineforz} yields the desired Carleman estimate \eqref{carfor5.6}.
\end{proof}

For any real number $d \neq 7$, applying \eqref{carfor5.6} to $(\lambda \gamma)^{\frac{d-7}{2}} z$ instead of $z$ and choosing $\lambda$ sufficiently large allows one to absorb the lower-order terms, yielding the following general Carleman estimate.
\begin{prop}
For any $d\in\mathbb{R}$, there exist a large $\mu_1\geq1$ such that for $\mu=\mu_1$, one can find constants $C>0$ and $\lambda_1\geq1$ depending only on $\mathcal{B}$ and $\mu_1$ such that for all $F_i\in L^2_\mathcal{F}(0,T;L^2(0,1))$ and $z_0\in L^2_{\mathcal{F}_0}(\Omega;L^2(0,1))$, the solution $z\in L^2_\mathcal{F}(\Omega;C([0,T];L^2(0,1)))\bigcap L^2_\mathcal{F}(0,T;H^2_0(0,1))$ of \eqref{eqqgfrse} satisfies that
\begin{align}\label{carfor5.6sec}
\begin{aligned}
&\lambda^d\mathbb{E}\iint_Q \theta^2\gamma^d|z|^2 \,dx \,dt +\lambda^{d-2}\mathbb{E}\iint_Q \theta^2\gamma^{d-2}|z_x|^2 \,dx \,dt +\lambda^{d-4}\mathbb{E}\iint_Q \theta^2\gamma^{d-4}|z_{xx}|^2 \,dx \,dt\\
&\leq C \bigg[\lambda^{d}\mathbb{E}\int_0^T\int_\mathcal{B} \theta^{2}\gamma^{d} |z|^2 \,dx\,dt+ \lambda^{d-7}\mathbb{E}\iint_Q \theta^{2}\gamma^{d-7}|F_1|^2 \,dx\,dt+ \lambda^{d-5}\mathbb{E}\iint_Q \theta^{d-5}\gamma^{2}|F_2|^2 \,dx\,dt\\
&\hspace{1cm}+\lambda^{d-3}\mathbb{E}\iint_Q \theta^{2}\gamma^{d-3} |F_3|^2 \,dx\,dt+\lambda^{d-3}\mathbb{E}\iint_Q \theta^{2}\gamma^{d-3} |F_4|^2 \,dx\,dt\bigg], 
\end{aligned}\end{align}
for any $\lambda\geq\lambda_1(T+T^2)$.
\end{prop}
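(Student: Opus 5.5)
Set $\ell:=\frac{d-7}{2}$ and $w:=(\lambda\gamma)^{\ell}z$, and apply Theorem \ref{themmain5.1} to $w$. Since $\gamma=\gamma(t)$ does not depend on $x$, $w$ satisfies the same boundary conditions as $z$. Its initial value is $\lim_{t\to0}(\lambda\gamma)^\ell z$, which vanishes for $\ell<0$ because $\gamma\to\infty$. For $\ell>0$ the initial value is singular, so I would apply the theorem to a time-truncated version of $w$ (or with $\gamma$ replaced by the regularized $\gamma_\varepsilon$ used in Proposition \ref{prop2.4}) and pass to the limit by monotone convergence.

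By Itô's formula, since $(\lambda\gamma)^\ell$ is deterministic and of bounded variation,
\[
dw+\eta\,w_{xxxx}\,dt=\big[(\lambda\gamma)^\ell F_1+\ell\lambda^\ell\gamma^{\ell-1}\gamma_t\,z+((\lambda\gamma)^\ell F_2)_x+((\lambda\gamma)^\ell F_3)_{xx}\big]dt+(\lambda\gamma)^\ell F_4\,dW(t).
\]
Thus $w$ solves \eqref{eqqgfrse} with sources $\widetilde F_1=(\lambda\gamma)^\ell F_1+\ell\lambda^\ell\gamma^{\ell-1}\gamma_t z$ and $\widetilde F_i=(\lambda\gamma)^\ell F_i$ for $i=2,3,4$. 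Applying \eqref{carfor5.6} to $w$ and using $w_x=(\lambda\gamma)^\ell z_x$, $w_{xx}=(\lambda\gamma)^\ell z_{xx}$, the left-hand side becomes
\[
\mathbb{E}\iint_Q\theta^2\big[\lambda^d\gamma^d|z|^2+\lambda^{d-2}\gamma^{d-2}|z_x|^2+\lambda^{d-4}\gamma^{d-4}|z_{xx}|^2\big],
\]
because $7+2\ell=d$. On the right-hand side the observation term becomes $\lambda^d\gamma^d|z|^2$ on $\mathcal{B}$. The $F_1$ term gives $\lambda^{d-7}\gamma^{d-7}|F_1|^2$, the $F_2$ term gives $\lambda^{2+2\ell}\gamma^{2+2\ell}|F_2|^2=\lambda^{d-5}\gamma^{d-5}|F_2|^2$, and the $F_3,F_4$ terms give $\lambda^{d-3}\gamma^{d-3}$. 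These are exactly the right-hand side of \eqref{carfor5.6sec}; the printed $\theta^{d-5}\gamma^{2}$ in the $F_2$ term should read $\theta^2\gamma^{d-5}$.

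The only extra term is the commutator $\mathbb{E}\iint_Q\theta^2\ell^2\lambda^{2\ell}\gamma^{2\ell-2}|\gamma_t|^2|z|^2$. By \eqref{aligned123}, $|\gamma_t|\le CT\gamma^2$, so this is bounded by $C\ell^2T^2\lambda^{d-7}\mathbb{E}\iint_Q\theta^2\gamma^{d-5}|z|^2$. Comparing with the left-hand term $\lambda^d\gamma^d|z|^2$, the ratio is $C\ell^2T^2\lambda^{-7}\gamma^{-5}\le C\ell^2T^{12}\lambda^{-7}$, using $\gamma^{-1}\le T^2/4$. For $\lambda\ge\lambda_1(T+T^2)$ with $\lambda_1$ large depending on $d$, we have $T^{12}\lambda^{-7}\le \lambda_1^{-7}T^{5}$, which is not uniformly small in $T$. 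The fix is to use $\gamma^{-5}\le CT^{10}$ together with $\lambda^{5}\ge\lambda_1^5T^{10}$ (from $\lambda\ge\lambda_1T^2$) and $\lambda^{2}\ge\lambda_1^2T^2$ (from $\lambda\ge\lambda_1 T$). This gives $T^2\lambda^{-7}\gamma^{-5}\le C\lambda_1^{-7}$, so the commutator term is absorbed into the left-hand side once $\lambda_1$ is chosen large depending on $d$.

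The main obstacle is therefore not this bookkeeping but justifying the application of Theorem \ref{themmain5.1} to $w$ when $d>7$. In that case $(\lambda\gamma)^\ell$ blows up at $t=0$ and at $t=T$, so $w$ need not lie in the solution class and its initial value is not in $L^2$. I would handle this by first proving the estimate with $\gamma$ replaced by $\gamma_\varepsilon=[(t+\varepsilon)(T-t+\varepsilon)]^{-1}$ in the multiplier only, checking that the constants are uniform in $\varepsilon$ since $|\partial_t\gamma_\varepsilon|\le CT\gamma_\varepsilon^2$. The initial datum $(\lambda\gamma_\varepsilon(0))^\ell z_0$ is then finite, but the theorem as stated has no initial-data term on the right, so it is harmless. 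If this is delicate, an alternative is to multiply $w$ by a cutoff in time near $0$ and $T$ and let the cutoff tend to $1$, using the exponential decay of $\theta$ at the endpoints to control all polynomial factors of $\gamma$.
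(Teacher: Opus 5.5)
Your proposal is correct and matches the paper's own argument: apply Theorem~\ref{themmain5.1} to $(\lambda\gamma)^{\frac{d-7}{2}}z$ and absorb the commutator $\ell\lambda^{\ell}\gamma^{\ell-1}\gamma_t z$ via $T^2\lambda^{-7}\gamma^{-5}\le C\lambda_1^{-7}$ for $\lambda\ge\lambda_1(T+T^2)$, and you are right that the $F_2$ weight should read $\theta^2\gamma^{d-5}$. One refinement, with your $\ell=\frac{d-7}{2}$: the $\gamma_\varepsilon$ (or time-cutoff) regularization is needed for $6\le d<7$ as well, not only for $d>7$, since then $\gamma^{\ell-1}\gamma_t z\sim t^{-(\ell+1)}z$ is not square integrable near $t=0$ (and $t=T$), so $\widetilde F_1\notin L^2_{\mathcal F}(0,T;L^2(0,1))$; your regularization covers this case too, using dominated rather than monotone convergence when $\ell<0$.
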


\begin{rmk}
The Carleman estimate \eqref{carfor5.6sec} is of independent interest. In particular, it can be applied to establish and improve the following controllability results:
\begin{enumerate}

\item The null controllability of the following more general backward stochastic fourth-order parabolic equation:
\begin{equation*}
dy - \eta\,y_{xxxx}\,dt = \Big(\sum_{i=0}^3 a_i\,\partial_x^i y + b\,Y + f\chi_{\mathcal B}\Big)\,dt + Y\,dW(t)\quad \textnormal{in } Q,
\end{equation*}
where $a_i, b \in L^\infty_{\mathcal{F}}(0,T;L^\infty(0,1))$ for $0 \le i \le 2$, $a_3 \in L^\infty_{\mathcal{F}}(0,T; W^{1,\infty}(0,1))$, and $f\in L^2_\mathcal{F}(0,T;L^2(\mathcal{B}))$ is the control function. By a standard duality argument, it suffices to establish the observability inequality (based on \eqref{carfor5.6sec})
\[
\mathbb{E}\|z(T,\cdot)\|_{L^2(0,1)}^2 \leq C_T \, \mathbb{E} \int_0^T \int_{\mathcal{B}} |z|^2\,dx\,dt,
\]
for all solutions to the adjoint system
\begin{equation*}
dz + \eta\,z_{xxxx}\,dt = -\sum_{i=0}^3 (-1)^i \partial_x^i (a_i z)\,dt - b z\,dW(t)\quad \textnormal{in } Q.
\end{equation*}

\item Regarding the null controllability of the backward stochastic KS equation in \cite[Theorem 1.2]{Gaochenli15}, the regularity assumption on the diffusion coefficient $q$ can be relaxed to $q \in L^\infty_{\mathcal{F}}(0,T;L^\infty(0,1))$.

\item For the insensitizing control problem of the forward stochastic KS equation considered in \cite[Theorem 1.2]{luliu25}, the diffusion coefficient $b$ can be assumed to satisfy $b \in L^\infty_{\mathcal{F}}(0,T;L^\infty(0,1))$.
\end{enumerate}
\end{rmk}

\subsection{Backward Case}
Let us consider the following general backward stochastic fourth order parabolic equation 
\begin{equation}\label{eqqgbc}
\begin{cases}
\begin{array}{ll}
dz -\eta\,z_{xxxx} \,dt = (F_1+F_{2,x}+F_{3,xx}) \,dt + Z\,dW(t),&\quad (t,x)\in Q,\\
z(t,0)=z(t,1)=0,&\quad t\in(0,T),\\
z_x(t,0)=z_x(t,1)=0,&\quad t\in(0,T),\\
z(T,x)=z_T(x),&\quad x\in(0,1),
\end{array}
\end{cases}
\end{equation}
where $z_T \in L^2_{\mathcal{F}_T}(\Omega;L^2(0,1))$ is the terminal state, and $F_i \in L^2_\mathcal{F}(0,T;L^2(0,1))$ for $1 \le i \le 3$. We now derive the following new Carleman estimate for \eqref{eqqgbc}.
\begin{thm}\label{thmm3.2p}
There exist a large $\mu_2\geq1$ such that for  $\mu=\mu_2$, one can find constants $C>0$ and $\lambda_2\geq1$ depending only on $\mathcal{B}$, $\mu_2$ and $T$ such that for all $\lambda\geq\lambda_2$, $F_i\in L^2_\mathcal{F}(0,T;L^2(0,1))$ and $z_T\in L^2_{\mathcal{F}_T}(\Omega;L^2(0,1))$, the solution $(z,Z)\in (L^2_\mathcal{F}(\Omega;C([0,T];L^2(0,1)))\bigcap L^2_\mathcal{F}(0,T;H^2_0(0,1)))\times L^2_\mathcal{F}(0,T;L^2(0,1))$ of \eqref{eqqgbc} satisfies that
\begin{align}\label{carback5.8}
\begin{aligned}
&\lambda^7\mathbb{E}\iint_Q \theta^2\gamma^7|z|^2 \,dx \,dt +\lambda^5\mathbb{E}\iint_Q \theta^2\gamma^5|z_x|^2 \,dx \,dt +\lambda^3\mathbb{E}\iint_Q \theta^2\gamma^3|z_{xx}|^2 \,dx \,dt\\
&\;\leq C \bigg[\lambda^7\mathbb{E}\int_0^T\int_{\mathcal{B}} \theta^2\gamma^7 |z|^2 \,dx\,dt+ \mathbb{E}\iint_Q \theta^2|F_1|^2 \,dx\,dt+ \lambda^{2}\mathbb{E}\iint_Q \theta^{2}\gamma^{2}|F_2|^2 \,dx\,dt\\
&\hspace{1cm}+ \lambda^{4}\mathbb{E}\iint_Q \theta^{2}\gamma^{4}|F_3|^2 \,dx\,dt
+\lambda^4\mathbb{E}\iint_Q \theta^2\gamma^4 |Z|^2 \,dx\,dt\bigg]. 
\end{aligned}\end{align}
\end{thm}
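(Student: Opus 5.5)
We follow the duality scheme used for Theorem \ref{themmain5.1}, with the roles of forward and backward equations exchanged. The first ingredient is a Carleman estimate for \eqref{eqqgbc} with $F_2\equiv F_3\equiv 0$, namely
\[
\mathbb{E}\iint_Q \theta^2\big[\lambda^7\gamma^7|z|^2+\lambda^5\gamma^5|z_x|^2+\lambda^3\gamma^3|z_{xx}|^2\big]\,dx\,dt\leq C\,\mathbb{E}\iint_Q\theta^2\big[\lambda^7\gamma^7|z|^2\chi_{\mathcal B}+|F_1|^2+\lambda^4\gamma^4|Z|^2\big]\,dx\,dt .
\]
This is the backward analogue of \cite[Theorem 3.1]{Gaochenli15} for the operator $d-\eta\partial_x^4\,dt$, after the time rescaling $t\mapsto \eta t$. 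It can also be derived from the deterministic pointwise weighted identity of \cite{zhouZ12}, where the Itô correction produces a term $\lambda^{3}\theta^2\gamma^3|Z_{xx}|^2$ or, after integrating by parts, a term $\lambda^4\theta^2\gamma^4|Z|^2$. Here the constants are allowed to depend on $T$, and we only need $\lambda\geq\lambda_2$.

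Next we introduce the controlled forward problem
\[
dy+\eta y_{xxxx}\,dt=\big[\lambda^7\theta^2\gamma^7 z+f\chi_{\mathcal B}\big]\,dt+g\,dW(t),\qquad y(0)=0 .
\]
The two controls are $f$ on $\mathcal B$ and $g$ on the whole domain; the full-domain control $g$ is needed because the adjoint of the forward equation with respect to the pairing with a backward equation produces $Z$. We then prove an analogue of Proposition \ref{prop2.4}: the null condition $y(T)=0$ holds, with
\[
\mathbb{E}\iint_Q\theta^{-2}\big[\lambda^{-7}\gamma^{-7}|f|^2+\lambda^{-4}\gamma^{-4}|g|^2+|y|^2+\lambda^{-2}\gamma^{-2}|y_x|^2+\lambda^{-4}\gamma^{-4}|y_{xx}|^2\big]\leq C\lambda^7\mathbb{E}\iint_Q\theta^2\gamma^7|z|^2 .
\]
To obtain it, we penalize $\frac1\varepsilon\mathbb{E}|y(T)|^2$ together with the weighted norms of $f,g,y$, with $\theta_\varepsilon$ regularized near $t=T$. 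The optimality system gives $f^\varepsilon=\lambda^7\theta^2\gamma^7 r\chi_{\mathcal B}$ and $g^\varepsilon=\lambda^4\theta^2\gamma^4 R$, where $(r,R)$ solves a backward equation of the type \eqref{eqqgbc} with source $\theta_\varepsilon^{-2}y^\varepsilon$. The uniform bound then follows from the preliminary Carleman estimate above via Itô's formula and Young's inequality.

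The weighted bounds on $y_x$ and $y_{xx}$ come from computing $d(\lambda^{-4}\theta_\varepsilon^{-2}\gamma^{-4}|y^\varepsilon|^2)$. This mirrors \eqref{estifseespsec}, except that the term $\lambda^{-4}\theta^{-2}\gamma^{-4}|g|^2$ now appears as a source rather than a left-hand term; it is already controlled by the first step. The interpolation for $y_x$ is done exactly as in \eqref{estofporzxn}. We then pass to weak limits as $\varepsilon\to0$.

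Finally, we apply Itô's formula to $z\widehat y$ and use $\widehat y(0)=0$ and $\widehat y(T)=0$. This yields
\[
\lambda^7\mathbb{E}\iint_Q\theta^2\gamma^7|z|^2=\mathbb{E}\iint_Q\big(F_1\widehat y-F_2\widehat y_x+F_3\widehat y_{xx}-z\widehat f\chi_{\mathcal B}-Z\widehat g\big)\,dx\,dt ,
\]
up to signs. By Cauchy--Schwarz with the weights $\theta^{\pm1}$, this gives the $z$-term in \eqref{carback5.8}, including the contributions $\lambda^4\theta^2\gamma^4|Z|^2$ and $\lambda^2\theta^2\gamma^2|F_2|^2$.

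The $z_x$ and $z_{xx}$ terms are recovered by computing $d(\lambda^3\theta^2\gamma^3z^2)$ for the backward equation. The quadratic variation now contributes $-\lambda^3\theta^2\gamma^3|Z|^2$, which sits on the right-hand side and is dominated by $\lambda^4\theta^2\gamma^4|Z|^2$. We then run the same absorption argument as in \eqref{ineqq511}--\eqref{ineqq511sec22}.

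We expect the main obstacle to be the first step: securing the base backward Carleman estimate with the $Z$-term in $L^2$ at the weight $\lambda^4\gamma^4$, since \cite{Gaochenli15} needs $H^2$-type control of the diffusion. If the only available base estimate involves $Z_x$ or $Z_{xx}$, the fallback is a second duality layer. In that case we would derive the base estimate from Theorem \ref{themmain5.1} by duality, controlling the forward equation with the source $\lambda^7\theta^2\gamma^7 z$ and solving for a backward controlled state. This trades the backward problem for Theorem \ref{themmain5.1}, whose diffusion term $F_4$ is already only in $L^2$.
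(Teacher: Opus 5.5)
Your proposal is correct and follows the paper's proof step by step: the base backward estimate is quoted from \cite[Theorem 4.1]{Gaochenli15} (Lemma \ref{lemma5.1sec}); then come the penalized two-control problem for the forward equation with $(f^\varepsilon,g^\varepsilon)=(\lambda^7\theta^2\gamma^7r^\varepsilon\chi_{\mathcal B},\lambda^4\theta^2\gamma^4R^\varepsilon)$, the weighted bounds via $d(\lambda^{-4}\theta_\varepsilon^{-2}\gamma^{-4}|y^\varepsilon|^2)$, the duality through It\^o's formula for $z\widehat y$, and finally $d(\lambda^3\theta^2\gamma^3z^2)$. The obstacle you anticipate does not arise: the $H^2$ requirement in \cite{Gaochenli15} concerns the given diffusion source in the forward estimate, whereas the backward estimate there already carries the term $\lambda^4\theta^2\gamma^4|Z|^2$, so your fallback is unnecessary (and as described it would be circular, since controllability of the forward problem requires a backward observability estimate, which Theorem \ref{themmain5.1} does not supply).
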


We begin by recalling the following Carleman estimate for solutions of \eqref{eqqgbc} in the special case where $F_2 \equiv F_3 \equiv 0$ (see \cite[Theorem 4.1]{Gaochenli15}).
\begin{lm}\label{lemma5.1sec}
There exist a large $\widetilde{\mu}_0\geq1$ such that for  $\mu=\widetilde{\mu}_0$, one can find constants $C>0$ and $\widetilde{\lambda}_0\geq1$ depending only on $\mathcal{B}$, $\widetilde{\mu}_0$ and $T$ such that for all $\lambda\geq\widetilde{\lambda}_0$, $F_1\in L^2_\mathcal{F}(0,T;L^2(0,1))$ and $z_T\in L^2_{\mathcal{F}_T}(\Omega;L^2(0,1))$, the solution $(z,Z)\in (L^2_\mathcal{F}(\Omega;C([0,T];L^2(0,1)))\bigcap L^2_\mathcal{F}(0,T;H^2_0(0,1)))\times L^2_\mathcal{F}(0,T;L^2(0,1))$ of \eqref{eqqgbc} with $F_2\equiv F_3\equiv0$ satisfies 
\begin{align}\label{carback5.8sec}
\begin{aligned}
\mathbb{E}\iint_Q \theta^2\Big[\lambda^7\gamma^7|z|^2  +\lambda^5\gamma^5|z_x|^2 +\lambda^3\gamma^3|z_{xx}|^2\Big] \,dx \,dt\leq C \,\mathbb{E}\iint_Q \theta^2\Big[\lambda^7\gamma^7 |z|^2\chi_{\mathcal{B}} +|F_1|^2 +\lambda^4\gamma^4 |Z|^2 \Big]\,dx\,dt. 
\end{aligned}\end{align}
\end{lm}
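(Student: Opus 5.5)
The lemma is essentially \cite[Theorem 4.1]{Gaochenli15}, so my plan is to reduce to that statement rather than redo the weighted-identity computation. The first step handles the coefficient $\eta$. The time rescaling $\tilde t=\eta t$ turns $dz-\eta z_{xxxx}\,dt$ into the operator $d\tilde z-\tilde z_{xxxx}\,d\tilde t$ on $(0,\eta T)$. The driving Brownian motion is replaced by $\widetilde W(\tilde t)=\sqrt{\eta}\,W(\tilde t/\eta)$, and $Z$ by $\eta^{-1/2}Z$. The weights $\gamma,\alpha,\theta$ of \eqref{weightfunct} built on $(0,T)$ become weights of the same form on $(0,\eta T)$, up to multiplicative constants depending only on $\eta$. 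These constants are absorbed into $C$ and $\widetilde\lambda_0$, exactly as was done for Lemma \ref{lemma5.1}. It therefore suffices to prove \eqref{carback5.8sec} for $\eta=1$.

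For completeness, I would follow the scheme of \cite{Gaochenli15} for $\eta=1$. Set $w=\theta z$ and write $\theta(dz-z_{xxxx}dt)=I_1+I_2+$(lower order). Here $I_1$ collects the terms of $\theta\partial_x^4(\theta^{-1}w)$ that are even in $\ell_x$, where $\ell=\lambda\alpha$, together with $dw$. $I_2$ collects the odd terms, together with $-\ell_t w$. Next I would multiply the equation by $I_2$ (a fourth-order multiplier acting like $\ell_x^3 w_x$, $\ell_x w_{xxx}$, \dots) and integrate over $Q$ using Itô's formula. Because of the Dirichlet/Neumann conditions, the boundary terms reduce to ones involving $w_{xx}$ and $w_{xxx}$ at $x=0,1$. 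These have a good sign because $\varphi_x(0)<0$ and $\varphi_x(1)>0$, which follows from $\kappa_x(0)>0$ and $\kappa_x(1)<0$.

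Taking $\mu$ large and then $\lambda$ large, and using $|\kappa_x|>0$ off $\mathcal B$, the leading terms
\[
\lambda^7\mu^8\theta^2\gamma^7\varphi^7|\kappa_x|^8|z|^2,\qquad \lambda^5\gamma^5|z_x|^2,\qquad \lambda^3\gamma^3|z_{xx}|^2
\]
dominate outside $\mathcal B$. The time derivatives of the weights are controlled via \eqref{aligned123}. The Itô correction $\mathbb{E}\int(dw)^2$ multiplied by coefficients of order $\lambda^4\gamma^4\theta^2$ is exactly what produces the term $\lambda^4\gamma^4|Z|^2$ on the right-hand side. Terms localized in $\mathcal B$ appear with derivatives. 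These I would remove by a cutoff $\zeta$ supported in a slightly larger set $\mathcal B'\Subset\mathcal O\cap\mathcal O_d^0$, applying Itô to $\zeta\theta^2\gamma^k z^2$ and integrating by parts, so that only $\lambda^7\gamma^7\theta^2|z|^2$ on the control region remains.

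I expect the main obstacle to be the sign analysis of the large pointwise identity. In particular, one must verify that the stochastic (Itô) contributions cost at most $\lambda^4\gamma^4|Z|^2$ and cannot be absorbed on the left-hand side. This is also why the weight blows up at both $t=0$ and $t=T$, since no initial data is controlled in this backward setting. Since this analysis is carried out in full in \cite{Gaochenli15}, I would simply invoke that theorem after the rescaling above.
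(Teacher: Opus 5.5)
Your proposal is correct and follows the paper, which likewise does not reprove this estimate but simply invokes \cite[Theorem 4.1]{Gaochenli15}; handling $\eta$ by the time rescaling $\tilde t=\eta t$ is the same device the paper uses for Lemma \ref{lemma5.1}. The rescaling is in fact exact: $\gamma(t)=\eta^2\tilde\gamma(\tilde t)$, so $\theta$ and $\lambda\gamma$ coincide with the rescaled weights taken with parameter $\tilde\lambda=\eta^2\lambda$. The factor $\eta^2$ therefore sits in the exponent and must be absorbed into $\lambda$, and hence into the threshold $\widetilde\lambda_0$, rather than into $C$; only the powers of $\eta$ produced by $dt$, $F_1$ and $Z$ go into $C$.
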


Based on the Carleman estimate \eqref{carback5.8sec}, we obtain the following controllability result for the forward stochastic fourth-order parabolic equation:
\begin{equation}\label{eqqgfrseconsecsec}
\begin{cases}
\begin{array}{ll}
dy + \eta\,y_{xxxx} \, dt = \left[\lambda^7 \theta^2 \gamma^7 z + f\chi_{\mathcal{B}} \right] \, dt + g \, dW(t), &\quad (t,x)\in Q,\\
y(t,0)=y(t,1)=0, &\quad t\in(0,T),\\
y_x(t,0)=y_x(t,1)=0, &\quad t\in(0,T),\\
y(0,x)=0, &\quad x\in(0,1),
\end{array}
\end{cases}
\end{equation}
where $y$ is the state variable, $(f,g)$ denotes the control pair, and $(z,Z)$ is the solution to \eqref{eqqgbc}.
\begin{prop}
There exists a control $(\widehat{f},\widehat{g}) \in L^2_\mathcal{F}(0,T;L^2(\mathcal{B}))\times L^2_\mathcal{F}(0,T;L^2(0,1))$ such that the associated solution $\widehat{y}$ of \eqref{eqqgfrseconsecsec} satisfies $\widehat{y}(T,\cdot)=0$ in $(0,1)$, a.s. Moreover, it holds that
\begin{align}\label{2.2ineqsec}
\begin{aligned}
&\mathbb{E}\iint_Q \theta^{-2}\Big[\lambda^{-7}\gamma^{-7} |\widehat{f}|^2+\lambda^{-4}\gamma^{-4} |\widehat{g}|^2 +|\widehat{y}|^2+\lambda^{-2}\gamma^{-2}|\widehat{y}_x|^2+\lambda^{-4}\gamma^{-4} |\widehat{y}_{xx}|^2\Big] \,dx \,dt\\
&\quad\leq C\;\lambda^7\mathbb{E}\iint_Q \theta^2\gamma^7 |z|^2\,dx\,dt.
\end{aligned}
\end{align}
\end{prop}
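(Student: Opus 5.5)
This is the mirror of Proposition \ref{prop2.4}, with time reversed and the extra control $g$ in the diffusion. We use a penalized optimal control problem whose dual is the backward equation \eqref{eqqgbc} with $F_2\equiv F_3\equiv 0$, so that Lemma \ref{lemma5.1sec} supplies the key uniform bound.

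\textbf{Penalized problem.} For $\varepsilon>0$ set $\theta_\varepsilon=e^{-\lambda\alpha_\varepsilon}$ with $\alpha_\varepsilon=\varphi(x)[(t+\varepsilon)(T-t+\varepsilon)]^{-1}$. We minimize
\begin{align*}
\frac12\mathbb{E}\bigg\{\int_0^T\!\!\int_{\mathcal B}\lambda^{-7}\theta^{-2}\gamma^{-7}|f|^2\,dx\,dt+\iint_Q\lambda^{-4}\theta^{-2}\gamma^{-4}|g|^2\,dx\,dt+\iint_Q\theta_\varepsilon^{-2}|y|^2\,dx\,dt+\frac1\varepsilon\int_0^1|y(T)|^2\,dx\bigg\}
\end{align*}
over weighted $L^2$ controls, subject to \eqref{eqqgfrseconsecsec}. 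The functional is strictly convex and coercive, so a unique minimizer $(f^\varepsilon,g^\varepsilon)$ exists. Its optimality system gives
\[
f^\varepsilon=\lambda^7\theta^2\gamma^7 r^\varepsilon\chi_{\mathcal B},\qquad g^\varepsilon=\lambda^4\theta^2\gamma^4 R^\varepsilon,
\]
where $(r^\varepsilon,R^\varepsilon)$ solves the backward equation
\[
dr^\varepsilon-\eta r^\varepsilon_{xxxx}\,dt=\theta_\varepsilon^{-2}y^\varepsilon\,dt+R^\varepsilon\,dW(t),\qquad r^\varepsilon(T)=-\tfrac1\varepsilon y^\varepsilon(T),
\]
with the signs fixed accordingly. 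This dual is of the form \eqref{eqqgbc} with $F_1=\theta_\varepsilon^{-2}y^\varepsilon$ and $F_2=F_3=0$.

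\textbf{Uniform bound.} Apply Itô's formula to $y^\varepsilon r^\varepsilon$ and substitute the characterization above. This gives
\begin{align*}
&\frac1\varepsilon\mathbb{E}|y^\varepsilon(T)|^2_{L^2}+\mathbb{E}\iint_Q\big(\lambda^7\theta^2\gamma^7|r^\varepsilon|^2\chi_{\mathcal B}+\lambda^4\theta^2\gamma^4|R^\varepsilon|^2+\theta_\varepsilon^{-2}|y^\varepsilon|^2\big)\,dx\,dt\\
&\quad=-\lambda^7\mathbb{E}\iint_Q\theta^2\gamma^7 z\,r^\varepsilon\,dx\,dt.
\end{align*}
By Cauchy--Schwarz the right-hand side is at most $\rho\lambda^7\mathbb{E}\iint_Q\theta^2\gamma^7|r^\varepsilon|^2+C(\rho)\lambda^7\mathbb{E}\iint_Q\theta^2\gamma^7|z|^2$. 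We bound the first term with Lemma \ref{lemma5.1sec} applied to $r^\varepsilon$. Its right-hand side contains exactly the local term $\lambda^7\theta^2\gamma^7|r^\varepsilon|^2\chi_{\mathcal B}$, the term $\lambda^4\theta^2\gamma^4|R^\varepsilon|^2$, and $\theta^2|F_1|^2=\theta^2\theta_\varepsilon^{-4}|y^\varepsilon|^2\le\theta_\varepsilon^{-2}|y^\varepsilon|^2$, using $\theta\le\theta_\varepsilon$. All three already appear on the left-hand side.

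This is why $g$ carries the weight $\lambda^{-4}\gamma^{-4}$: it is what places the $Z$-term of the backward Carleman estimate on the left-hand side. Choosing $\rho$ small yields
\[
\frac1\varepsilon\mathbb{E}|y^\varepsilon(T)|^2+\mathbb{E}\iint_Q\theta^{-2}\big(\lambda^{-7}\gamma^{-7}|f^\varepsilon|^2+\lambda^{-4}\gamma^{-4}|g^\varepsilon|^2\big)+\mathbb{E}\iint_Q\theta_\varepsilon^{-2}|y^\varepsilon|^2\le C\lambda^7\mathbb{E}\iint_Q\theta^2\gamma^7|z|^2.
\]

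\textbf{Derivative estimates.} We compute $d(\lambda^{-4}\theta_\varepsilon^{-2}\gamma^{-4}|y^\varepsilon|^2)$, integrate by parts twice, and use the bounds on $(\theta_\varepsilon^{-2}\gamma^{-4})_t$ and its $x$-derivatives stated in the proof of Proposition \ref{prop2.4}. This controls $\lambda^{-4}\theta_\varepsilon^{-2}\gamma^{-4}|y^\varepsilon_{xx}|^2$ in terms of:
\begin{itemize}
\item lower-order terms in $y^\varepsilon$ and $y^\varepsilon_x$;
\item the cross term $\lambda^3\theta_\varepsilon^{-2}\theta^2\gamma^3|y^\varepsilon||z|$;
\item the $f$ cross term;
\item the Itô correction $\lambda^{-4}\theta_\varepsilon^{-2}\gamma^{-4}|g^\varepsilon|^2$, already bounded above.
\end{itemize}
The time derivative of the weight appears with a favourable sign here; since $y^\varepsilon(0)=0$ and the weight vanishes at $t=T$, there are no boundary-in-time terms. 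The $y^\varepsilon_x$ term is then interpolated exactly as in \eqref{estofporzxn}.

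\textbf{Passage to the limit.} We extract weak limits as $\varepsilon\to0$. The bound on $\varepsilon^{-1}\mathbb{E}|y^\varepsilon(T)|^2$ forces $\widehat y(T)=0$ a.s. Weak lower semicontinuity, together with $\theta_\varepsilon^{-2}\to\theta^{-2}$ monotonically, gives \eqref{2.2ineqsec}.

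The main obstacle is the duality step: checking that the adjoint signs and the terminal condition produce the identity above, and that Lemma \ref{lemma5.1sec} closes the estimate. This relies on the exact matching of the weights $\lambda^{-4}\gamma^{-4}$ on $g$ with $\lambda^4\gamma^4$ on $Z$. The remaining steps are routine repetitions of the forward case.
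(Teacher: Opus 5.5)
Your proposal is correct and follows the paper's proof step by step. It uses the same penalized weighted functional \eqref{2.03secsec} and the same characterization $(f^\varepsilon,g^\varepsilon)=(\lambda^7\theta^2\gamma^7r^\varepsilon\chi_{\mathcal B},\lambda^4\theta^2\gamma^4R^\varepsilon)$ through the backward adjoint with $r^\varepsilon(T)=-\frac1\varepsilon y^\varepsilon(T)$, then the uniform bound from Lemma \ref{lemma5.1sec} (via $\theta\le\theta_\varepsilon$), the weighted energy estimate for $y^\varepsilon_{xx}$ with interpolation for $y^\varepsilon_x$, and weak limits. One minor correction: the term with $(\theta_\varepsilon^{-2}\gamma^{-4})_t$ does not have a favourable sign, but it needs none, since $|(\theta_\varepsilon^{-2}\gamma^{-4})_t|\le C\lambda^2\theta_\varepsilon^{-2}\gamma^{-2}$ bounds it by $C\,\mathbb{E}\iint_Q\theta_\varepsilon^{-2}|y^\varepsilon|^2\,dx\,dt$, which is already controlled.
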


\begin{proof}
For any $\varepsilon > 0$, we consider the following minimization problem:
\begin{equation}\label{2.03secsec}
\inf_{(f,g) \in \mathcal{H}} \,\frac{1}{2}\,\mathbb{E}\bigg\{
\int_0^T \int_{\mathcal{B}} \lambda^{-7} \theta^{-2} \gamma^{-7} |f|^2 \, dx \, dt
+ \iint_Q \lambda^{-4} \theta^{-2} \gamma^{-4} |g|^2 \, dx \, dt 
+ \iint_Q \theta_{\varepsilon}^{-2} |y|^2 \, dx \, dt 
+ \frac{1}{\varepsilon} \int_0^1 |y(T,x)|^2 \, dx
\bigg\},
\end{equation}
where $\theta_\varepsilon = e^{-\lambda \alpha_\varepsilon}$, with $\alpha_\varepsilon \equiv \alpha_\varepsilon(t,x) = \varphi(x)\,[(t+\varepsilon)(T-t+\varepsilon)]^{-1}$, subject to \eqref{eqqgfrseconsecsec}. The admissible set $\mathcal{H}$ is defined by
\begin{align*}
\mathcal{H} = \bigg\{ (f,g) \in \big(L^2_\mathcal{F}(0,T;L^2(0,1))\big)^2 : \;
\mathbb{E}\int_0^T \int_{\mathcal{B}} \theta^{-2} \gamma^{-7} |f|^2 \, dx \, dt < \infty, \quad\mathbb{E} \iint_Q \theta^{-2} \gamma^{-4} |g|^2 \, dx \, dt < \infty
\bigg\}.
\end{align*}
It is straightforward to verify that the problem \eqref{2.03secsec} admits a unique optimal solution $(f^\varepsilon,g^\varepsilon) \in \mathcal{H}$, which is characterized by
\begin{equation}\label{2.003sec}
(f^\varepsilon,g^\varepsilon) = \big(\lambda^7 \theta^2 \gamma^7 r^\varepsilon \chi_{\mathcal{B}},\, \lambda^4 \theta^2 \gamma^4 R^\varepsilon\big)
\quad \textnormal{in } Q,\ \textnormal{a.s.},
\end{equation}
where $(r^\varepsilon,R^\varepsilon)$ is the solution to the following backward stochastic fourth-order parabolic equation:
\begin{equation}\label{2.04sec}
\begin{cases}
\begin{array}{ll}
dr^\varepsilon - \eta\,r^\varepsilon_{xxxx} \, dt = \theta_\varepsilon^{-2} y^\varepsilon \, dt+R^\varepsilon \, dW(t),&\quad (t,x)\in Q,\\
r^\varepsilon(t,0)=r^\varepsilon(t,1)=0,&\quad t\in(0,T),\\
r^\varepsilon_x(t,0)=r^\varepsilon_x(t,1)=0,&\quad t\in(0,T),\\
r^\varepsilon(T,x)=-\frac{1}{\varepsilon} y^\varepsilon(T,x),&\quad x\in(0,1),
\end{array}
\end{cases}
\end{equation}
and $y^\varepsilon$ denotes the solution of \eqref{eqqgfrseconsecsec} corresponding to the control $(f^\varepsilon,g^\varepsilon)$. By combining \eqref{2.003sec}, \eqref{carback5.8sec}, and Itô's formula, we obtain the following uniform estimate for the family of optimal solutions $\{(f^\varepsilon, g^\varepsilon, y^\varepsilon)\}_{\varepsilon>0}$,
\begin{align}\label{firstesespsec}
\begin{aligned}
&\frac{1}{\varepsilon} \mathbb{E} \int_0^1 |y^\varepsilon(T,x)|^2 \, dx + \lambda^{-7} \mathbb{E} \iint_Q\theta^{-2} \gamma^{-7} |f^\varepsilon|^2 \, dx \, dt\\
&\quad+ \lambda^{-4} \mathbb{E} \iint_Q\theta^{-2} \gamma^{-4} |g^\varepsilon|^2 \, dx \, dt  + \mathbb{E} \iint_Q \theta_\varepsilon^{-2} |y^\varepsilon|^2 \, dx \, dt\\
&\qquad\leq C\lambda^7 \mathbb{E} \iint_Q \theta^2 \gamma^7 |z|^2 \, dx \, dt.
\end{aligned}
\end{align}
On the other hand, by computing $d\big(\lambda^{-4}\theta_\varepsilon^{-2}\gamma^{-4}|y^\varepsilon|^2\big)$ and arguing as in the proof of Proposition \ref{prop2.4}, we obtain
\begin{align}\label{estifseespsecsec}
\begin{aligned}
\lambda^{-4}\mathbb{E}\iint_Q \theta_\varepsilon^{-2}\gamma^{-4} |y^\varepsilon_{xx}|^2 \,dx \,dt\leq \,\widetilde{C}\lambda^{-2}\mathbb{E}\iint_Q \theta_\varepsilon^{-2}\gamma^{-2}|y^\varepsilon_x|^2 \,dx \,dt+C\lambda^{7}\mathbb{E}\iint_Q \theta^2\gamma^{7}|z|^2 \,dx \,dt.
\end{aligned}
\end{align}
By integration by parts, we also have
\begin{align}\label{estofporzxnsec}
\,\widetilde{C}\lambda^{-2}\mathbb{E}\iint_Q \theta_\varepsilon^{-2}\gamma^{-2}|y^\varepsilon_x|^2 \,dx \,dt\leq& \,C \,\mathbb{E}\iint_Q \theta_\varepsilon^{-2} |y^\varepsilon|^2 \,dx \,dt+\frac{1}{2}\lambda^{-4}\mathbb{E}\iint_Q \theta_\varepsilon^{-2}\gamma^{-4} |y^\varepsilon_{xx}|^2 \,dx \,dt.
\end{align}
Combining \eqref{estofporzxnsec}, \eqref{estifseespsecsec}, and \eqref{firstesespsec}, we deduce
\begin{align}\label{unifirstesespsecsec}
\begin{aligned}
& \frac{1}{\varepsilon} \mathbb{E} \int_0^1 |y^\varepsilon(T,x)|^2 \, dx + \lambda^{-7} \mathbb{E} \iint_Q \theta^{-2} \gamma^{-7} |f^\varepsilon|^2 \, dx \, dt +\lambda^{-4}\mathbb{E}\iint_Q \theta_\varepsilon^{-2}\gamma^{-4}|g^\varepsilon|^2 \,dx \,dt\\
&+ \mathbb{E} \iint_Q \theta_\varepsilon^{-2} |y^\varepsilon|^2 \, dx \, dt+\lambda^{-2}\mathbb{E}\iint_Q \theta_\varepsilon^{-2}\gamma^{-2}|y^\varepsilon_x|^2 \,dx \,dt+\lambda^{-4}\mathbb{E}\iint_Q \theta_\varepsilon^{-2}\gamma^{-4} |y^\varepsilon_{xx}|^2 \,dx \,dt \\
& \quad\leq C\lambda^7 \mathbb{E} \iint_Q \theta^2 \gamma^7 |z|^2 \, dx \, dt.
\end{aligned}
\end{align}
It follows that there exists $(\widehat{f}, \widehat{g}, \widehat{y}) \in L^2_\mathcal{F}(0,T;L^2(\mathcal{B})) \times L^2_\mathcal{F}(0,T;L^2(0,1)) \times L^2_\mathcal{F}(0,T;H^2_0(0,1))$ such that, as $\varepsilon \to 0$,
\begin{align}\label{2.020sec}
\begin{aligned}
& f^\varepsilon \rightharpoonup \widehat{f}, \quad \textnormal{in} \quad L^2((0,T) \times \Omega; L^2(\mathcal{B})); \\
& g^\varepsilon \rightharpoonup \widehat{g}, \quad \textnormal{in} \quad L^2((0,T) \times \Omega; L^2(0,1));\\
& y^\varepsilon \rightharpoonup \widehat{y}, \quad \textnormal{in} \quad L^2((0,T) \times \Omega; H^2_0(0,1)).
\end{aligned}
\end{align}
One can easily verify that $\widehat{y}$ solves \eqref{eqqgfrseconsecsec} associated with the control pair $(\widehat{f},\widehat{g})$. By combining \eqref{unifirstesespsecsec} and \eqref{2.020sec}, we finally obtain the null controllability of \eqref{eqqgfrseconsecsec} along with the estimate \eqref{2.2ineqsec}.
\end{proof}

We now provide the proof of Theorem \ref{thmm3.2p}.  
\begin{proof}[Proof of Theorem \ref{thmm3.2p}]
Let $(z,Z)$ be the solution of \eqref{eqqgbc}, and let $\widehat{y}$ be the solution of \eqref{eqqgfrseconsecsec} corresponding to the control pair $(\widehat{f},\widehat{g})$ obtained in Proposition \ref{prop2.4}.  
Applying Itô's formula yields
\[
\lambda^7 \mathbb{E} \iint_Q \theta^2 \gamma^7 |z|^2 \, dx \, dt
= - \mathbb{E} \iint_Q \big(F_1 \widehat{y} - F_2 \widehat{y}_x + F_3 \widehat{y}_{xx} + z \widehat{f} \chi_\mathcal{B} + Z \widehat{g} \big) \, dx \, dt.
\]
Using the estimate \eqref{2.2ineqsec}, we then obtain
\begin{align*}
\lambda^7 \mathbb{E} \iint_Q \theta^2 \gamma^7 |z|^2 \, dx \, dt
\leq C \, \mathbb{E} \iint_Q \theta^2 \Big[\lambda^7\gamma^7 |z|^2\chi_{\mathcal{B}} +|F_1|^2
+ \lambda^2 \gamma^2 |F_2|^2 + \lambda^4  \gamma^4 |F_3|^2  
+ \lambda^4  \gamma^4 |Z|^2\Big] \, dx \, dt.
\end{align*}
Finally, by applying Itô's formula to $d(\lambda^3 \theta^2 \gamma^3 z^2)$ and following the same computations as in the proof of Theorem \ref{themmain5.1}, we deduce the desired Carleman estimate \eqref{carback5.8}.
\end{proof}

\begin{rmk}
It can be readily verified that the Carleman estimates \eqref{carfor5.6} and \eqref{carback5.8} remain valid when $z$ satisfies the boundary condition
\begin{align}\label{newbcxx}
    \begin{cases}
        z(t,0)=z(t,1)=0, &\quad t\in(0,T),\\
        z_{xx}(t,0)=z_{xx}(t,1)=0, &\quad t\in(0,T).
    \end{cases}
\end{align}
These estimates can be used to show that the robust Stackelberg controllability result stated in Theorem \ref{th4.1SN} continues to hold under the boundary condition \eqref{newbcxx}.
\end{rmk}

\begin{rmk}
In \cite[Theorem 1.3]{ZhangGaoyua}, the authors establish a Carleman estimate similar to \eqref{carback5.8}. However, the weight functions used therein differ from those introduced in the present paper. Consequently, this estimate cannot be directly used in the next section to derive a Carleman estimate for the coupled forward–backward system \eqref{ADJSO1}.
\end{rmk}
\begin{rmk}
Extending the Carleman estimates \eqref{carfor5.6} and \eqref{carback5.8} to higher-dimensional stochastic fourth-order parabolic equations is considerably more challenging (see, e.g., \cite{qiluwang22}) and will be considered in future work.
\end{rmk}

\section{A Carleman Estimate for Coupled Stochastic KS--KdV Systems}\label{sec3}
In this section, we establish a Carleman estimate for the coupled backward–forward system \eqref{ADJSO1}. We adopt the same notations as in Section \ref{sec3sen}.  In what follows, for any $d \in \mathbb{R}$, we introduce the following quantity to simplify the presentation:
\begin{align*}
\mathcal{I}(d,\cdot)=&\,\lambda^d\mathbb{E}\iint_Q \theta^2\gamma^d|\cdot|^2 \,dx \,dt +\lambda^{d-2}\mathbb{E}\iint_Q \theta^2\gamma^{d-2}|\partial_x \cdot|^2 \,dx \,dt +\lambda^{d-4}\mathbb{E}\iint_Q \theta^2\gamma^{d-4}|\partial_{xx} \cdot|^2 \,dx \,dt.
\end{align*}

Combining  \eqref{carfor5.6} and \eqref{carback5.8}, we prove the following main Carleman estimate.
\begin{lm}\label{thmm5.1} 
Let $(p,P;q)$ be the solution of \eqref{ADJSO1}, and assume that  assumption  \eqref{Assump10} holds. Then there exists a sufficiently large constant $\mu_3 \geq 1$ such that, for $\mu = \mu_3$, one can find constants $C>0$ and $\lambda_3 \geq 1$, depending on \(\mathcal{O}\), \(\mathcal{D}\), \(\mathcal{O}_d^0\), $\mathcal{O}_d^1$, $\mathcal{O}_d^2$, \(T\), $\mu_3$, \(\|a\|_\infty\), \(\|b\|_\infty\). such that for all $\lambda \geq \lambda_3$, it holds that
\begin{align}\label{Carlem5.9}
\begin{aligned}
\mathcal{I}(7,p)+\mathcal{I}(9,q)\leq\,C \bigg[\lambda^{47}  \mathbb{E}\int_0^T\int_{\mathcal{O}} \theta^2 \gamma^{47} |p|^2 \, \,dx\,dt+\lambda^{9} \mathbb{E}\iint_Q\theta^2\gamma^{9}|P|^2\,dx\,dt\bigg].
\end{aligned}
\end{align}
\end{lm}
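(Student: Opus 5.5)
We will combine the backward estimate of Theorem \ref{thmm3.2p} for $p$ with the forward estimate of Theorem \ref{themmain5.1} (in its shifted form \eqref{carfor5.6sec} with $d=9$) for $q$. The observation set is $\mathcal{B}\Subset \mathcal{O}\cap\mathcal{O}_d^0$. By \eqref{Assump10} we may take $\mathcal{B}$ disjoint from $\overline{\mathcal{O}_d^1\cup\mathcal{O}_d^2}$, after shrinking it inside a small interval of $(\mathcal{O}\cap\mathcal{O}_d^0)\setminus\overline{(\mathcal{O}_d^1\cup\mathcal{O}_d^2)}$. One should check that this set has nonempty interior; if not, we work with an open piece of it.

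\textbf{Step 1: Carleman estimates for $p$ and $q$.} We treat the lower-order terms as sources. For $p$ we take $F_1=-ap-bP+q\chi_{\mathcal{O}_d^0}-kp_{xx}$ (or $kp_x$ in $F_2$), the dispersive term $p_{xxx}$ written as $F_{3,xx}$ with $F_3=p_x$, the term $-q_{xx}\chi_{\mathcal{O}_d^1}$ placed in $F_3$ as $-q\chi_{\mathcal{O}_d^1}$, and $q_{xxxx}\chi_{\mathcal{O}_d^2}$ written as $(q_{xx}\chi_{\mathcal{O}_d^2})_{xx}$, so $F_3\ni q_{xx}\chi_{\mathcal{O}_d^2}$. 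The cut-off derivatives are a concern, so in practice we multiply by smooth cut-offs and keep the commutator terms in $F_1,F_2$. Theorem \ref{thmm3.2p} then gives $\mathcal{I}(7,p)\le C[\lambda^7\int_{\mathcal{B}}\theta^2\gamma^7|p|^2+\mathbb{E}\iint\theta^2(|q|^2+\lambda^4\gamma^4|q_{xx}|^2+\dots)+\lambda^4\mathbb{E}\iint\theta^2\gamma^4|P|^2]$. The terms $ap$, $p_x$ and $p_{xx}$ are absorbed for large $\lambda$. The $q$ terms are of order $\lambda^4\gamma^4|q_{xx}|^2$, which is exactly $\lambda^{-1}$ times the $q_{xx}$ part of $\mathcal{I}(9,q)$ (namely $\lambda^5\gamma^5|q_{xx}|^2$). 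For $q$ we use \eqref{carfor5.6sec} with $d=9$ and $F_1=aq+\frac1\beta p\chi_{\mathcal{D}}-\frac1{\delta_1}p$, $F_3=q_x$, $F_2$ or $F_1$ containing $kq_x$, and $F_4=bq-\frac1{\delta_2}P$. This is precisely where the $L^2$ diffusion in Theorem \ref{themmain5.1} is needed. It yields $\mathcal{I}(9,q)\le C[\lambda^9\int_{\mathcal{B}}\theta^2\gamma^9|q|^2+\lambda^2\mathbb{E}\iint\theta^2\gamma^2|p|^2+\lambda^6\mathbb{E}\iint\theta^2\gamma^6|P|^2]$, which is bounded by the $P$ term in \eqref{Carlem5.9}. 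Adding the two estimates, the cross terms are absorbed for $\lambda$ large: $\lambda^2\gamma^2|p|^2\ll\lambda^7\gamma^7|p|^2$, and the $q$ terms coming from $p$ are $O(\lambda^{-1})\mathcal{I}(9,q)$.

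\textbf{Step 2: eliminating the local $q$ term.} This is the main obstacle. We must bound $\lambda^9\mathbb{E}\int_0^T\int_{\mathcal{B}}\theta^2\gamma^9|q|^2$ in terms of local $p$. Take $\zeta\in C_c^\infty(\mathcal{B}')$ with $\zeta=1$ on $\mathcal{B}$, where $\mathcal{B}\Subset\mathcal{B}'\Subset \mathcal{O}\cap\mathcal{O}_d^0\setminus\overline{\mathcal{O}_d^1\cup\mathcal{O}_d^2}$ (and with $\mathcal{B}'\cap\mathcal{D}$ irrelevant). On $\mathcal{B}'$ the $p$-equation reads $dp-(\cdots)dt=[-ap-bP+q]dt+P\,dW$. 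Apply Itô's formula to $d(\lambda^9\theta^2\gamma^9\zeta q\,p)$ and integrate over $Q$. The $q$ source gives $\lambda^9\mathbb{E}\iint\theta^2\gamma^9\zeta|q|^2$. The remaining terms are handled as follows: integrate by parts four times to move derivatives onto $p$; estimate the terms $\lambda^{9+j}\gamma^{9+j}\theta^2\partial^{m}q\,\partial^{4-m}p$, using $|\partial_x^j(\theta^2\gamma^9\zeta)|\le C\lambda^j\theta^2\gamma^{9+j}$, by Young's inequality as $\epsilon\mathcal{I}(9,q)$ plus $C_\epsilon$ times localized high-weight norms of $p,p_x,\dots,p_{xxxx}$; and bound the Itô cross-variation term $\lambda^9\theta^2\gamma^9\zeta P(bq-\frac1{\delta_2}P)$ by $\epsilon\mathcal{I}(9,q)+C\lambda^9\mathbb{E}\iint\theta^2\gamma^9|P|^2$. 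The derivatives of $p$ on $\mathcal{B}'$ are then removed by the usual local energy/interpolation argument: Itô on $\theta^2\gamma^m\zeta'|p_x|^2$ and similar, or interpolation between $p$ and $H^2$ on a larger cutoff. Each derivative costs $\lambda^2\gamma^2$ in the weight of $\int|p|^2$. Tracking the exponents through this cascade produces the power $\lambda^{47}\gamma^{47}$ on $\int_{\mathcal{O}}|p|^2$, and the $P$ terms appear with weight at most $\lambda^9\gamma^9$ after absorbing the higher ones (we will adjust the bookkeeping if needed). Finally we choose $\mu=\max(\mu_1,\mu_2)$ and $\lambda$ large, absorb all $\epsilon$-terms, and obtain \eqref{Carlem5.9}.
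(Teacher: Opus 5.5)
Your Steps 1 and 2 follow the paper's argument. You apply \eqref{carfor5.6sec} with $d=9$ to $q$ and \eqref{carback5.8} to $p$, observed on a set inside $\mathcal{O}\cap\mathcal{O}_d^0$ away from $\mathcal{O}_d^1\cup\mathcal{O}_d^2$. You then use Itô's formula for $\theta^2(\lambda\gamma)^9\zeta\,pq$ to trade the local $q$-term for local $p$-terms.

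Two side remarks. First, $-q_{xx}\chi_{\mathcal{O}_d^1}$ is not $(-q\chi_{\mathcal{O}_d^1})_{xx}$; simply put it in $F_1$, where $\theta^2|q_{xx}|^2$ is absorbed by $\mathcal{I}(9,q)$. Second, your geometric worry is justified. \eqref{Assump10} allows $(\mathcal{O}\cap\mathcal{O}_d^0)\setminus(\mathcal{O}_d^1\cup\mathcal{O}_d^2)$ to be a single point, and then no ``open piece'' exists. The paper tacitly needs $\mathcal{O}\cap\mathcal{O}_d^0\not\subset\overline{\mathcal{O}_d^1\cup\mathcal{O}_d^2}$ in \eqref{setsBiO12d}.

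The genuine gap is the elimination of the localized derivatives of $p$, which is where the content of the lemma lies. As written, you integrate by parts until up to four derivatives sit on $p$, leaving localized weighted norms of $p_{xxx}$ and $p_{xxxx}$. Nothing controls these:
\begin{itemize}
\item $\mathcal{I}(7,p)$ stops at $p_{xx}$;
\item interpolation cannot bound a top-order derivative;
\item local energy identities at that order involve $(bP)_x$ and $(bP)_{xx}$ with $b$ only in $L^\infty$.
\end{itemize}
You must stop at two derivatives on each factor, which is possible because $\mathcal{I}(9,q)$ controls $q_{xx}$. For instance, with $w_m=\theta^2(\lambda\gamma)^m$, the fourth-order terms combine into $\eta\,\mathbb{E}\iint[(w_9\zeta q)_{xx}p_{xx}-(w_9\zeta p)_{xx}q_{xx}]\,dx\,dt$. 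The only local terms left are then $\lambda^{17}\gamma^{17}|p|^2$, $\lambda^{15}\gamma^{15}|p_x|^2$ and $\lambda^{13}\gamma^{13}|p_{xx}|^2$ on $\mathrm{supp}\,\zeta$, plus $\lambda^9\gamma^9|P|^2$, as in \eqref{ineqqforset2}.

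The decisive missing idea is how to remove the local $\lambda^{13}\gamma^{13}|p_{xx}|^2$ term without losing the weight on $P$. This needs a Caccioppoli-type identity, and the one that works is Itô's formula for $w_{13}\zeta_2\,p^2$, with $\mathrm{supp}\,\zeta_2$ disjoint from $\mathcal{O}_d^1\cup\mathcal{O}_d^2$. That way $q_{xxxx}\chi_{\mathcal{O}_d^2}$ drops out and only the source $-q$ remains. Because $p$ solves a backward equation, this identity contains $-\mathbb{E}\iint w_{13}\zeta_2|P|^2\,dx\,dt$ with a favorable sign. That term absorbs the cross term $2\mathbb{E}\iint w_{13}\zeta_2\,bpP\,dx\,dt$, via $2|b||p||P|\le|P|^2+\|b\|_\infty^2|p|^2$.

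Without this sign you are left with $\lambda^{13}\gamma^{13}|P|^2$ on the right. The left-hand side of \eqref{Carlem5.9} contains no $P$, so there is nothing to ``absorb the higher ones'' into, and the stated weight $\lambda^9\gamma^9$ on $P$ is lost. The identity you suggest, Itô for $|p_x|^2$, does not do this job. It produces $p_{xxx}$ rather than $p_{xx}$, and the terms coming from $(bP)_x$ require weighted $|P|^2$, which $|P_x|^2$ cannot absorb.

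Once $p_{xx}$ is removed, the local $p_x$ term is handled by one integration by parts, $\mathbb{E}\iint w_{25}\zeta_3|p_x|^2=-\mathbb{E}\iint[(w_{25}\zeta_3)_xp_x+w_{25}\zeta_3p_{xx}]\,p$, absorbed against $\varepsilon\,\mathcal{I}(7,p)$. This is what yields $\lambda^{47}\gamma^{47}$ on $\mathcal{O}$; the exact exponent is immaterial.
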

\begin{proof} For clarity, we divide the proof into three steps.\\
\textbf{Step 1. Notation and preliminary estimates.}\\
Set \( w_m = \theta^2 (\lambda \gamma)^m \) with \( m \in \mathbb{N} \). In view of \eqref{Assump10}, we introduce nonempty open subsets \( \mathcal{B}_1 \), \( \mathcal{B}_2 \), and \( \mathcal{B}_3 \) such that
\begin{align}\label{setsBiO12d}
\mathcal{B}_3 \Subset\mathcal{B}_2 \Subset \mathcal{B}_1 \Subset \mathcal{B}_0 \equiv \mathcal{O} \cap \mathcal{O}_d^0,\quad\textnormal{and}\quad\mathcal{B}_1 \cap(\mathcal{O}_d^1\cup\mathcal{O}_d^2)=\emptyset,
\end{align}
where \( \mathcal{B}_i \Subset \mathcal{B}_{i-1} \) (for \( i=1,2,3 \)) means that \( \overline{\mathcal{B}_i} \subset \mathcal{B}_{i-1} \).

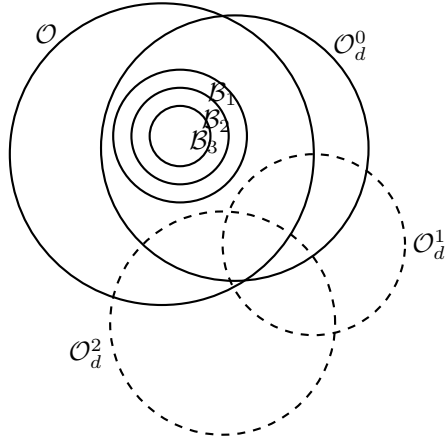
\begin{figure}[H]
\centering
\begin{tikzpicture}[scale=0.8]

\draw[thick] (0,0) circle (2.5);
\node at (-1.9,2.0) {$\mathcal{O}$};

\draw[thick] (1.2,0.1) circle (2.2);
\node at (3.1,1.8) {$\mathcal{O}_d^0$};

\draw[thick, dashed] (2.5,-1.5) circle (1.5);
\node at (4.4,-1.5) {$\mathcal{O}_d^1$};

\draw[thick, dashed] (1.0,-2.8) circle (1.85);
\node at (-1.25,-3.3) {$\mathcal{O}_d^2$};

\draw[thick] (0.3,0.3) circle (1.1);
\node at (1.0,1.0) {$\mathcal{B}_1$};

\draw[thick] (0.3,0.3) circle (0.8);
\node at (0.9,0.55) {$\mathcal{B}_2$};

\draw[thick] (0.3,0.3) circle (0.5);
\node at (0.7,0.2) {$\mathcal{B}_3$};

\end{tikzpicture}
\caption{A configuration of the sets $\mathcal{O}$, $\mathcal{O}_d^0$, $\mathcal{O}_d^1$, and $\mathcal{O}_d^2$, with the subsets $\mathcal{B}_i$ satisfying \eqref{setsBiO12d}.}
\end{figure}

Next, we define the functions \( \zeta_i \in C^{\infty}(\mathbb{R}) \) (for the existence of such functions, see, e.g., \cite{HSP18}), satisfying
\begin{align}\label{assmzeta}
\begin{aligned}
& 0 \leq \zeta_i \leq 1, \quad \zeta_i = 1 \quad \text{in} \quad \mathcal{B}_{4-i}, \quad \text{Supp}(\zeta_i) \subset \mathcal{B}_{3-i}, \\ 
& \frac{(\zeta_i)_{xx}}{\zeta_i^{1/2}} \in L^\infty(G), \quad \frac{(\zeta_i)_x}{\zeta_i^{1/2}} \in L^\infty(G), \quad i = 1, 2,3.
\end{aligned}
\end{align}
From \eqref{aligned123} and \eqref{assmzeta}, we show that for sufficiently large \( \lambda\geq\lambda_0 \), one has
\begin{align}\label{esttmforT}
\begin{aligned}
|\partial_t w_m|\leq C&\lambda^{m+2}\theta^2\gamma^{m+2},\qquad|(w_m\zeta_i)_x|\leq C\lambda^{m+1}\theta^2\gamma^{m+1}\zeta_i^{1/2},\\
&|(w_m\zeta_i)_{xx}|\leq C\lambda^{m+2}\theta^2\gamma^{m+2}\zeta_i^{1/2},\quad i=1,2,3.
\end{aligned}
\end{align}
Firstly, applying the Carleman estimate \eqref{carfor5.6sec} to the solution \(q\) with \(\mathcal{B} \equiv \mathcal{B}_3\) and $d=9$, we conclude that there exists a sufficiently large constant \(\mu_1 \ge 1\) such that, for \(\mu = \mu_1\), one can find a constant \(C>0\) and a sufficiently large \(\lambda_1 \ge 1\) such that, for all \(\lambda \ge \lambda_1\), we have
\begin{align}\label{ineqforIq}
\begin{aligned}
\mathcal{I}(9,q)\leq &\,C \bigg[ \lambda^9 \mathbb{E} \int_0^T \int_{\mathcal{B}_3} \theta^2 \gamma^9 |q|^2 \, dx \, dt +\lambda^6\mathbb{E} \iint_Q \theta^2\gamma^6\bigg|bq-\frac{1}{\delta_2}\,P\bigg|^2 \,dx\,dt\\
&\qquad + \lambda^2\mathbb{E} \iint_Q \theta^2 \gamma^2\left| -k\,q_{xx}+aq +\frac{1}{\beta}p\chi_{\mathcal{D}} -\frac{1}{\delta_1}\,p\right|^2  \, dx \, dt+\lambda^4\mathbb{E} \iint_Q \theta^2\gamma^4|q_{xx}|^2 dxdt\bigg].
\end{aligned}
\end{align}
Taking a large $\lambda\geq\widetilde{\lambda}_1$ in \eqref{ineqforIq}, we end up with
\begin{align}\label{Car4.13}
\begin{aligned}
\mathcal{I}(9,q)\leq C \bigg[  &\lambda^9 \mathbb{E} \int_0^T \int_{\mathcal{B}_3} \theta^2 \gamma^9 |q|^2 \, dx \, dt + \lambda^2\mathbb{E} \iint_Q \theta^2\gamma^2 |p|^2  \, dx \, dt+ \lambda^6\mathbb{E} \iint_Q \theta^2\gamma^6 |P|^2  \, dx \, dt\bigg].
\end{aligned}
\end{align}
Secondly, applying the Carleman estimate \eqref{carback5.8} to the system satisfied by \((p,P)\) with \(\mathcal{B} \equiv \mathcal{B}_3\), we deduce that there exists a constant \(\mu_2 \geq 1\) such that, for \(\mu = \mu_2\), one can find a constant \(C>0\) and a sufficiently large \(\lambda_2 \ge 1\) such that, for all \(\lambda \ge \lambda_2\), the following holds
\begin{align*}
\begin{aligned}
\mathcal{I}(7,p)\leq &\,C \bigg[ \lambda^7 \mathbb{E} \int_0^T \int_{\mathcal{B}_3} \theta^2 \gamma^7 |p|^2 \, dx \, dt +\lambda^4\mathbb{E} \iint_Q \theta^2\gamma^4 |P|^2 \,dx\,dt+\lambda^2\mathbb{E} \iint_Q \theta^2\gamma^2|p_{xx}|^2 dxdt\\
&\qquad+ \mathbb{E} \iint_Q \theta^2 \bigg| k\,p_{xx}-ap-bP +q\chi_{\mathcal{O}_d^0}-q_{xx}\chi_{\mathcal{O}_d^1} \bigg|^2  \, dx \, dt+\lambda^4\mathbb{E} \iint_Q \theta^2\gamma^4|q_{xx}|^2 dxdt\bigg].
\end{aligned}
\end{align*}
Taking a large enough $\lambda\geq\widetilde{\lambda}_2$, it follows that
\begin{align}\label{car4.14}
\begin{aligned}
\mathcal{I}(7,p)\leq C \bigg[ &\lambda^7 \mathbb{E} \int_0^T \int_{\mathcal{B}_3} \theta^2 \gamma^7 |p|^2 \, dx \, dt  + \mathbb{E} \iint_Q \theta^2 |q|^2  \, dx \, dt+\lambda^4\mathbb{E} \iint_Q \theta^2\gamma^4|q_{xx}|^2 dxdt\\
&\;+\lambda^4\mathbb{E} \iint_Q \theta^2\gamma^4 |P|^2 \,dx\,dt\bigg].
\end{aligned}
\end{align}
Combining \eqref{Car4.13} and \eqref{car4.14} and taking \( \mu = \mu_3 = \max(\mu_1, \mu_2) \) and a sufficiently large \( \lambda \geq \max(\lambda_0,\widetilde{\lambda}_1,\widetilde{\lambda}_2) \), we obtain 
\begin{align}\label{firsine1}
\begin{aligned}
\mathcal{I}(7,p)+\mathcal{I}(9,q)\leq C \bigg[ &\lambda^7 \mathbb{E} \int_0^T \int_{\mathcal{B}_3} \theta^2 \gamma^7 |p|^2 \, dx \, dt+\lambda^9 \mathbb{E} \int_0^T \int_{\mathcal{B}_3} \theta^2 \gamma^9 |q|^2 \, dx \, dt +\lambda^6\mathbb{E} \iint_Q \theta^2\gamma^6 |P|^2 \,dx\,dt\bigg].
\end{aligned}
\end{align}
\textbf{Step 2. Elimination of the localized integral term associated with \(q\).}\\
We first note that 
\begin{align}\label{firineqq}
\lambda^9 \mathbb{E}\int_0^T\int_{\mathcal{B}_3} \theta^2\gamma^9 |q|^2 \, dx\,dt\leq \mathbb{E}\iint_Q w_9\zeta_1 |q|^2 \, dx\,dt.
\end{align}
Using Itô's formula for \( d(w_9\zeta_1 pq) \), we conclude that
\begin{align*}
\begin{aligned}
\mathbb{E}\iint_Q w_9\zeta_1 |q|^2 \,dx\,dt=& -\mathbb{E}\iint_Q(w_9\zeta_1)_t pq  \,dx\,dt+\mathbb{E}\iint_Q kw_9\zeta_1 pq_{xx} \,dx\,dt+\mathbb{E}\iint_Q  w_9\zeta_1 pq_{xxx} \,dx\,dt\\
&+\mathbb{E}\iint_Q  w_9\zeta_1 qp_{xxx} \,dx\,dt+\mathbb{E}\iint_Q \eta(w_9\zeta_1p)_{xx} q_{xx} \,dx\,dt+\frac{1}{\delta_1}\mathbb{E}\iint_Q w_9\zeta_1 |p|^2 \,dx\,dt\\
&-\frac{1}{\beta}\mathbb{E}\iint_Q w_9\zeta_1 |p|^2\chi_{\mathcal{D}} \,dx\,dt-\mathbb{E}\iint_Q kw_9\zeta_1 qp_{xx}\,dx\,dt-\mathbb{E}\iint_Q \eta (w_9\zeta_1 q)_{xx}p_{xx}\,dx\,dt\\
&+\mathbb{E}\iint_Q w_9\zeta_1  qq_{xx}\chi_{\mathcal{O}_d^1}\,dx\,dt-\mathbb{E}\iint_Q w_9\zeta_1  qq_{xxxx}\chi_{\mathcal{O}_d^2}\,dx\,dt+\frac{1}{\delta_2}\mathbb{E}\iint_Q w_9\zeta_1 |P|^2 \,dx\,dt.
\end{aligned}
\end{align*}
Fix \(\varepsilon>0\) sufficiently small. By integration by parts, using Young's inequality, \eqref{setsBiO12d} and \eqref{esttmforT}, it follows that, for \(\lambda\) sufficiently large,
\begin{align}\label{ineqqforset2}
\begin{aligned}
\mathbb{E}\iint_Q w_9\zeta_1 |q|^2 \,dx\,dt\leq C\varepsilon \mathcal{I}(9,q)+\frac{C}{\varepsilon}\bigg[ &\lambda^{17} \mathbb{E} \int_0^T \int_{\mathcal{B}_2} \theta^2 \gamma^{17}  |p|^2 \, dx \, dt+\lambda^{15} \mathbb{E} \int_0^T \int_{\mathcal{B}_2} \theta^2 \gamma^{15}  |p_{x}|^2 \, dx \, dt\\
&+\lambda^{13} \mathbb{E} \int_0^T \int_{\mathcal{B}_2} \theta^2 \gamma^{13}  |p_{xx}|^2 \, dx \, dt +\lambda^9\mathbb{E} \iint_Q \theta^2\gamma^9 |P|^2 \,dx\,dt\bigg].
\end{aligned}
\end{align}
Combining \eqref{ineqqforset2} and \eqref{firineqq}, and choosing \(\varepsilon>0\) sufficiently small and \(\lambda\) large enough, we obtain that
\begin{align}\label{firsine1secsec}
\begin{aligned}
\mathcal{I}(7,p)+\mathcal{I}(9,q)\leq C \bigg[ &\lambda^{17} \mathbb{E} \int_0^T \int_{\mathcal{B}_2} \theta^2 \gamma^{17}  |p|^2 \, dx \, dt+\lambda^{15} \mathbb{E} \int_0^T \int_{\mathcal{B}_2} \theta^2 \gamma^{15}  |p_{x}|^2 \, dx \, dt\\
&+\lambda^{13} \mathbb{E} \int_0^T \int_{\mathcal{B}_2} \theta^2 \gamma^{13}  |p_{xx}|^2 \, dx \, dt +\lambda^9\mathbb{E} \iint_Q \theta^2\gamma^9 |P|^2 \,dx\,dt\bigg].
\end{aligned}
\end{align}
\textbf{Step 3. Elimination of the localized integrals associated with \(p_x\) and \(p_{xx}\).}\\
We first eliminate the term associated with \(p_{xx}\). Observe that
\begin{align}\label{pxx11326}
    \lambda^{12} \mathbb{E} \int_0^T \int_{\mathcal{B}_2} \theta^2 \gamma^{12} |p_{xx}|^2 \, dx \, dt \leq \mathbb{E} \iint_Q w_{13} \zeta_2|p_{xx}|^2 \, dx \, dt.
\end{align}
Applying It\^o's formula to \(w_{13}\zeta_2 p^2\), we obtain
\begin{align}\label{eqq3.288}
    \begin{aligned}
2\eta\mathbb{E} \iint_Q w_{13} \zeta_2|p_{xx}|^2 \, dx \, dt = &-\mathbb{E} \iint_Q (w_{13} \zeta_2)_t |p|^2 \,dx\,dt-2\eta\mathbb{E} \iint_Q (w_{13} \zeta_2)_{xx} pp_{xx} \,dx\,dt\\
&-4\eta\mathbb{E} \iint_Q (w_{13} \zeta_2)_x p_x p_{xx} \,dx\,dt-2k\mathbb{E} \iint_Q w_{13} \zeta_2 pp_{xx} \,dx\,dt\\
&+2\mathbb{E} \iint_Q w_{13} \zeta_2 pp_{xxx} \,dx\,dt+2\mathbb{E} \iint_Q w_{13} \zeta_2 ap^2 \,dx\,dt\\
&+2\mathbb{E} \iint_Q w_{13} \zeta_2 bpP \,dx \,dt- \mathbb{E} \iint_Q w_{13} \zeta_2 |P|^2 \,dx\,dt\\
&+2\mathbb{E} \iint_Q w_{13} \zeta_2 p(-q+q_{xx}\chi_{\mathcal{O}_d^1}-q_{xxxx}\chi_{\mathcal{O}_d^2}) \,dx\,dt.
    \end{aligned}
\end{align}
Fix $\varepsilon>0$. Noting that the penultimate term on the right-hand side of \eqref{eqq3.288} is negative, and applying Young's inequality along with \eqref{setsBiO12d}, we obtain, for $\lambda$ sufficiently large,
\begin{align}\label{ineqqforpxx}
    \begin{aligned}
2\eta\mathbb{E} &\iint_Q w_{13} \zeta_2|p_{xx}|^2 \, dx \, dt\\
&\leq C\varepsilon \big[\mathcal{I}(7,p)+\mathcal{I}(9,q)\big]+\frac{C}{\varepsilon}\bigg[ \lambda^{27} \mathbb{E} \int_0^T \int_{\mathcal{B}_1} \theta^2 \gamma^{27}  |p|^2 \, dx \, dt+\lambda^{25} \mathbb{E} \int_0^T \int_{\mathcal{B}_1} \theta^2 \gamma^{25}  |p_{x}|^2 \, dx \, dt\\
&\hspace{4.8cm}+\lambda^{9}\mathbb{E} \iint_Q \theta^2\gamma^{9} |P|^2 \,dx\,dt\bigg].
    \end{aligned}
\end{align}
Combining \eqref{ineqqforpxx}, \eqref{pxx11326}, and \eqref{firsine1secsec}, and choosing \(\varepsilon>0\) sufficiently small and \(\lambda\) large enough, we deduce that
\begin{align}\label{firsine1secsecs2}
\begin{aligned}
\mathcal{I}(7,p)+\mathcal{I}(9,q)\leq C \bigg[ &\lambda^{27} \mathbb{E} \int_0^T \int_{\mathcal{B}_1} \theta^2 \gamma^{27} p^2 \, dx \, dt+\lambda^{25} \mathbb{E} \int_0^T \int_{\mathcal{B}_1} \theta^2 \gamma^{25} |p_{x}|^2 \, dx \, dt\\
&\;+\lambda^{9}\mathbb{E} \iint_Q \theta^2\gamma^{9} |P|^2 \,dx\,dt\bigg].
\end{aligned}
\end{align}
We now eliminate the localized integral term associated with \(p_x\). To this end, we notice
\begin{align}\label{inespx31}
    \lambda^{25} \mathbb{E} \int_0^T \int_{\mathcal{B}_1} \theta^2 \gamma^{25} |p_{x}|^2 \, dx \, dt\leq \mathbb{E} \iint_Q w_{25}\zeta_3 |p_{x}|^2 \, dx \, dt.
\end{align}
On the other hand, applying integration by parts, we have
\begin{align*}
    \mathbb{E} \iint_Q w_{25}\zeta_3 |p_{x}|^2 \, dx \, dt=&- \mathbb{E} \iint_Q (w_{25}\zeta_3p_x)_x p \, dx \, dt\\
    =&- \mathbb{E} \iint_Q (w_{25}\zeta_3)_x p_x p \, dx \, dt- \mathbb{E} \iint_Q w_{25}\zeta_3p_{xx} p \, dx \, dt.
\end{align*}
Hence for any $\varepsilon>0$, it follows that
\begin{align}\label{ineqpx32}
    \mathbb{E} \iint_Q w_{25}\zeta_3 |p_{x}|^2 \, dx \, dt\leq 2\varepsilon \mathcal{I}(7,p)+\frac{C}{\varepsilon}\lambda^{47}\mathbb{E} \int_0^T\int_\mathcal{O} \theta^2\gamma^{47} |p|^2 \,dx\,dt.
\end{align}
Combining \eqref{ineqpx32}, \eqref{inespx31}, and \eqref{firsine1secsecs2}, and choosing \(\varepsilon>0\) sufficiently small and \(\lambda\) sufficiently large, we obtain 
\begin{align}\label{firsine1sfinal}
\begin{aligned}
\mathcal{I}(7,p)+\mathcal{I}(9,q)\leq C \bigg[ &\lambda^{47} \mathbb{E} \int_0^T \int_{\mathcal{O}} \theta^2 \gamma^{47} |p|^2 \, dx \, dt+\lambda^{9}\mathbb{E} \iint_Q \theta^2\gamma^{9} |P|^2 \,dx\,dt\bigg].
\end{aligned}
\end{align}
This completes the proof of Lemma \ref{thmm5.1}.
\end{proof}

\section{Proof of the Main Result}\label{sec4}
This section proves the main result stated in Theorem \ref{th4.1SN}.  To handle the source terms $y_d^i$, $i=0,1,2$, in \eqref{eqq4.7}, we introduce the modified weight functions:
\begin{align}\label{rec1}
&\,\overline{\gamma}\equiv\overline{\gamma}(t) = \frac{1}{\ell(t)},\qquad\overline{\alpha}\equiv\overline{\alpha}(t,x) = \varphi(x)\overline{\gamma}(t),\qquad \overline{\theta}=e^{-\lambda\overline{\alpha}},
\end{align}
where
\begin{equation*}\label{eq:adjoint-system} 
\ell(t)=
\begin{cases}
    \begin{array}{ll}
\frac{T^2}{4} &\quad\textnormal{for}\;\; 0\leq t\leq \frac{T}{2},\\
t(T-t) &\quad\textnormal{for}\;\; \frac{T}{2}\leq t\leq T.
    \end{array}
\end{cases}
\end{equation*}
We also denote by 
$$\overline{\mathcal{I}}(\cdot)=\mathbb{E}\iint_Q \overline{\theta}^2\overline{\gamma}^7|\cdot|^2 \,dx \,dt +\mathbb{E}\iint_Q \overline{\theta}^2\overline{\gamma}^5|\partial_x \cdot|^2 \,dx \,dt +\mathbb{E}\iint_Q \overline{\theta}^2\overline{\gamma}^3|\partial_{xx} \cdot|^2 \,dx \,dt.$$

Let us first establish the following observability inequality.
\begin{prop}\label{Pro4.2}
Assume that  \eqref{Assump10} holds, with \(\beta,\delta_1,\delta_2>0\) sufficiently large. Then there exist a constant \(C_T>0\) depending on \(\mathcal{O}\), \(\mathcal{D}\), \(\mathcal{O}_d^0\), $\mathcal{O}_d^1$, $\mathcal{O}_d^2$, \(T\), \(\|a\|_\infty\), \(\|b\|_\infty\), and a positive weight function \(\rho = \rho(t)\) blowing up as \(t \to T\) such that, for any \(p_T \in L^2_{\mathcal{F}_T}(\Omega; L^2(0,1))\), the solution \((p,P;q)\) of \eqref{ADJSO1} satisfies
\begin{align}\label{observaineq}
\begin{aligned}
&\,\mathbb{E}\|p(0,\cdot)\|^2_{L^2(0,1)}+ \mathbb{E}\iint_Q\rho^{-2}(|q|^2+|q_x|^2+|q_{xx}|^2)\,dx\,dt\leq C_T \, \mathbb{E}\iint_Q \big(|p|^2 \chi_{\mathcal{O}}+ |P|^2\big)\,dx\,dt.
\end{aligned}
\end{align}
\end{prop}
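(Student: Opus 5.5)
The proof follows the standard scheme: combine the Carleman estimate of Lemma \ref{thmm5.1} on $(T/2,T)$ with energy estimates on $(0,T/2)$, using the modified weights $\overline{\theta},\overline{\gamma}$ of \eqref{rec1}.

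Fix $\mu=\mu_3$ and $\lambda=\lambda_3$ in Lemma \ref{thmm5.1}, so all remaining constants depend on $T$. On the right-hand side of \eqref{Carlem5.9}, the functions $\theta^2\gamma^{47}$ and $\theta^2\gamma^9$ are bounded on $Q$, because $\theta^2$ decays like $e^{-c/(t(T-t))}$. The right-hand side is therefore bounded by $C_T\,\mathbb{E}\iint_Q(|p|^2\chi_{\mathcal O}+|P|^2)$. On the left-hand side, for $t\in(T/2,T)$ we have $\theta=\overline\theta$ and $\gamma=\overline\gamma$. Hence $\mathcal I(7,p)+\mathcal I(9,q)$ controls
\[
\mathbb{E}\int_{T/2}^T\!\!\int_0^1 \overline\theta^2\big(\overline\gamma^7|p|^2+\overline\gamma^5|p_x|^2+\overline\gamma^3|p_{xx}|^2\big)\,dx\,dt,
\]
together with the analogous weighted quantities for $q$, $q_x$, $q_{xx}$.

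\textbf{Step 1 (energy estimate for $p$ backward to $t=0$).} Apply Itô's formula to $|p|^2$ on $(0,t)$ with $t\in(T/4,3T/4)$. The term $-\eta\|p_{xx}\|^2$ gives coercivity, and $k p_{xx}$ is absorbed by interpolation. The coupling terms $q,\,q_{xx}\chi_{\mathcal O_d^1},\,q_{xxxx}\chi_{\mathcal O_d^2}$ are the delicate part. After one integration by parts, $q_{xxxx}\chi_{\mathcal O_d^2}$ produces boundary terms at $\partial\mathcal O_d^2$. To avoid them, I would first pass to a smoothed-cutoff version, or argue on the full coupled system using a joint energy $\mathbb{E}\|p(t)\|^2$ plus the $q$-energy, which yields
\[
\mathbb{E}\|p(0)\|^2\le C\Big(\mathbb{E}\|p(t)\|^2+\mathbb{E}\!\int_0^t\big(\|q\|_{H^2}^2+\|P\|^2\big)\Big).
\]
The $q$ terms on $(0,T/2)$ are in turn controlled by a forward energy estimate for $q$ from $q(0)=0$:
\[
\mathbb{E}\sup_{s\le t}\|q(s)\|^2+\mathbb{E}\!\int_0^t\|q\|_{H^2}^2\le \frac{C}{\beta^2}\,\mathbb{E}\!\int_0^t\big(\|p\|^2+\|P\|^2\big),
\]
with a similar factor involving $1/\delta_1^2$ and $1/\delta_2^2$. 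Choosing $\beta,\delta_1,\delta_2$ large makes these coupling contributions small, so they can be absorbed. This closes a Gronwall-type system on $(0,T/2)$, which is where the smallness of $1/\beta,1/\delta_i$ is used. Integrating in $t$ over $(T/4,3T/4)$, where the weights are bounded below, then bounds $\mathbb{E}\|p(0)\|^2$ by $C_T[\mathcal I(7,p)+\mathcal I(9,q)]$ plus absorbed terms.

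\textbf{Step 2 (the $q$ terms).} Set $\rho^{-2}=\overline\theta^2\overline\gamma^{5}$, or a smaller power that is dominated by all three weights. This $\rho$ is bounded below on $[0,T/2]$ and blows up as $t\to T$. On $(T/2,T)$ the $q$, $q_x$, $q_{xx}$ terms are bounded directly by $\mathcal I(9,q)$. On $(0,T/2)$ they are bounded by the forward energy estimate of Step 1, whose right-hand side $\mathbb{E}\int_0^{T/2}(\|p\|^2+\|P\|^2)$ is in turn controlled by the energy estimate for $p$ together with the Carleman bound on $(T/4,3T/4)$.

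The main obstacle is Step 1: handling the strong coupling $q_{xxxx}\chi_{\mathcal O_d^2}$ in the energy estimate for $p$ without losing derivatives. I would try a weighted joint energy functional for $(p,q)$, or accept a contribution of size $\|q\|_{H^4}$ and recover it from parabolic regularity of the forward $q$-equation. The largeness of $\beta,\delta_1,\delta_2$ is what allows this coupling to be absorbed.
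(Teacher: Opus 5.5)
Your outline is essentially the paper's proof: freeze $\mu,\lambda$ in Lemma \ref{thmm5.1}, prove a backward energy estimate for $p$ near $t=0$ and a forward energy estimate for $q$ from $q(0)=0$, and use large $\beta,\delta_1$ to absorb the coupling. Averaging over terminal times $t$ is equivalent to the paper's time cutoff $\nu$. Three points need attention.

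First, the step you call the main obstacle goes away once the term is read correctly. The coupling $q_{xxxx}\chi_{\mathcal{O}_d^2}$ has to be understood in divergence form, $(\chi_{\mathcal{O}_d^2}q_{xx})_{xx}\in H^{-2}(0,1)$. This is how it arises by duality from the $|y_{xx}-y_d^2|^2\chi_{\mathcal{O}_d^2}$ part of the cost. It is also how the paper uses it: in Lemma \ref{thmm5.1} it is an $F_{3,xx}$ source with $F_3=q_{xx}\chi_{\mathcal{O}_d^2}$, and the paper's energy estimate contains only $\|\nu q\|_{H^2}$. Paired with $p(t)\in H^2_0(0,1)$, it gives $\mathbb{E}\int\chi_{\mathcal{O}_d^2}q_{xx}p_{xx}\,dx$ with no interface terms. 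This is absorbed by the coercive $2\eta\|p_{xx}\|^2$, so your displayed inequality with $\|q\|_{H^2}$ follows. Do not pursue your fallback via $H^4$ regularity. The diffusion coefficient of $q$ contains $P/\delta_2$, which is only in $L^2$, so $q$ lies only in $L^2_{\mathcal{F}}(0,T;H^2_0(0,1))$. A smoothed cutoff is not an option either, since it would change the system.

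Second, the statement requires $\rho=\rho(t)$, but your $\overline{\theta}^2\overline{\gamma}^5$ depends on $x$. Use the paper's choice $\rho(t)=e^{\lambda\overline{\alpha}^{\star}(t)}$ with $\overline{\alpha}^{\star}=\max_x\overline{\alpha}$, or equivalently take the minimum over $x$ of your weight. Then on $(T/2,T)$ you have $\rho^{-2}\le\overline{\theta}^2=\theta^2\le C_T\theta^2\gamma^5$, since $\gamma\ge 4/T^2$. Also $\rho$ still blows up at $T$, because $\varphi>0$. With this change your Step 2 goes through as written.

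Third, your handling of the $q$-terms is a genuinely different route. The paper performs a second weighted It\^o computation for $\rho^{-2}|q|^2$, using $\rho'\ge 0$, and then uses $\rho^{-2}\le\overline{\theta}^2$. This reduces everything to $\overline{\mathcal{I}}(p)+\mathbb{E}\iint_Q|P|^2$, which is why its Step 1 must bound all of $\overline{\mathcal{I}}(p)$ and not merely $\mathbb{E}\|p(0)\|^2$. You instead read the $(T/2,T)$ contribution directly off $\mathcal{I}(9,q)$, which already appears on the left of Lemma \ref{thmm5.1}. On $(0,T/2)$ you need only the unweighted forward estimate. This is shorter and equally valid. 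One detail: to bound $\mathbb{E}\int_0^{T/2}\|p\|^2$ from your closed Gronwall system, average over terminal times $t\in(T/2,3T/4)$ rather than $(T/4,3T/4)$.
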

\begin{proof} 
We divide the proof into two steps.\\
\textbf{Step 1.} Let \(\nu \in C^1([0,T])\) be a function such that
\begin{equation*}
\nu = 1 \quad \text{in } \bigg[0,\frac{T}{2}\bigg], \quad 
\nu = 0 \quad \text{in } \bigg[\frac{3T}{4},T\bigg], \quad 
\text{and }\;\; \nu' \le \frac{C}{T^2}.
\end{equation*}
Set \((\phi,\Phi) = \nu (p,P)\). Then \((\phi,\Phi)\) satisfies the backward system
\begin{equation*}
\begin{cases}
\begin{array}{lll}
d\phi -(k\,\phi_{xx}-\phi_{xxx}+\eta\,\phi_{xxxx})\,dt
= \Big[-a\phi-b\Phi+ \nu q\chi_{\mathcal{O}_d^0}-\nu q_{xx}\chi_{\mathcal{O}_d^1}\\
\hspace{6cm}+\nu q_{xxxx}\chi_{\mathcal{O}_d^2}+\nu' p \Big] dt+ \Phi\,dW(t),
& (t,x)\in Q,\\[2mm]
\phi(t,0)=\phi(t,1)=0, & t\in(0,T),\\
\phi_x(t,0)=\phi_x(t,1)=0, & t\in(0,T),\\
\phi(T,x)=0, & x\in(0,1).
\end{array}
\end{cases}
\end{equation*}
Computing $d\phi^2$ and applying Young's inequality, we obtain, for any $t\in(0,T)$,
\begin{align*}
&\mathbb{E}\|\phi(t,\cdot)\|^2_{L^2(0,1)}+\mathbb{E}\int_t^T (\|\phi_x\|^2_{L^2(0,1)}+\|\phi_{xx}\|^2_{L^2(0,1)}+\|\Phi\|^2_{L^2(0,1)}) \,ds\\
    &\leq C\,\mathbb{E}\int_t^T (\|\phi\|^2_{L^2(0,1)}+\|\nu q\|^2_{H^2(0,1)}+\|\nu'p\|^2_{L^2(0,1)}) \,ds.
\end{align*}
Hence, by Gr\"onwall's inequality, we deduce 
\begin{align*}
&\mathbb{E}\|\phi(t,\cdot)\|^2_{L^2(0,1)}+\mathbb{E}\int_t^T (\|\phi\|^2_{L^2(0,1)}+\|\phi_x\|^2_{L^2(0,1)}+\|\phi_{xx}\|^2_{L^2(0,1)}+\|\Phi\|^2_{L^2(0,1)}) \,ds\\
    &\leq C\,\mathbb{E}\int_0^T (\|\nu q\|^2_{H^2(0,1)}+\|\nu'p\|^2_{L^2(0,1)}) \,ds,
\end{align*}
which implies that
\begin{align}\label{energy}
\begin{aligned}
&\,\mathbb{E}\|p(0,\cdot)\|^2_{L^2(0,1)}+\mathbb{E}\int_0^{\frac{T}{2}}\int_0^1 (|p|^2+|p_x|^2+|p_{xx}|^2+|P|^2) \,dx\,dt\\
&\leq C\bigg[\mathbb{E}\int_{\frac{T}{2}}^{\frac{3T}{4}}\int_0^1 |p|^2 \,dx\,dt+\mathbb{E}\int_{0}^{\frac{3T}{4}}\int_0^1 (|q|^2+|q_{x}|^2+|q_{xx}|^2) \,dx\,dt \bigg].
\end{aligned}
\end{align}
Since  \(\overline{\theta}\) and \(\overline{\gamma}\) (resp., \(\theta\) and \(\gamma\)) are bounded in \((0,\frac{T}{2}) \times (0,1)\) (resp., \((\frac{T}{2},\frac{3T}{4}) \times (0,1)\)), we deduce that
\begin{align}\label{energy2}
\begin{aligned}
&\,\mathbb{E}\|p(0,\cdot)\|^2_{L^2(0,1)}+\mathbb{E}\int_0^{\frac{T}{2}}\int_0^1\overline{\theta}^2\overline{\gamma}^7|p|^2  \,dx\,dt\\
&+\mathbb{E}\int_0^{\frac{T}{2}}\int_0^1\overline{\theta}^2\overline{\gamma}^5 |p_x|^2  \,dx\,dt+\mathbb{E}\int_0^{\frac{T}{2}}\int_0^1\overline{\theta}^2\overline{\gamma}^3 |p_{xx}|^2  \,dx\,dt\\ 
&\leq C \bigg[\lambda^7\mathbb{E}\iint_Q\theta^2\gamma^7 p^2\,dx\,dt+  \mathbb{E}\int_{0}^{\frac{3T}{4}}\int_0^1 (|q|^2+|q_{x}|^2+|q_{xx}|^2) \,dx\,dt \bigg].
\end{aligned}
\end{align}
Note that \((\theta,\gamma) = (\overline{\theta},\overline{\gamma})\) on \(\left(\frac{T}{2}, T\right)\). Therefore, we have that
\begin{align}\label{Eq5.18}
\begin{aligned}
&\,\mathbb{E}\int_{\frac{T}{2}}^T\int_0^1\overline{\theta}^2\overline{\gamma}^7|p|^2  \,dx\,dt+\mathbb{E}\int_{\frac{T}{2}}^T\int_0^1\overline{\theta}^2\overline{\gamma}^5 |p_x|^2  \,dx\,dt+\mathbb{E}\int_{\frac{T}{2}}^T\int_0^1\overline{\theta}^2\overline{\gamma}^3 |p_{xx}|^2  \,dx\,dt\\
&\leq \lambda^7\mathbb{E}\iint_Q\theta^2\gamma^7 |p|^2\,dx\,dt+\lambda^5\mathbb{E}\iint_Q \theta^2\gamma^5|p_x|^2\,dx\,dt+\lambda^3\mathbb{E}\iint_Q \theta^2\gamma^3|p_{xx}|^2\,dx\,dt.
\end{aligned}
\end{align}
From \eqref{energy2} and \eqref{Eq5.18}, we conclude that
 \begin{align}\label{Ineq1}
 \begin{aligned}
&\,\mathbb{E}\|p(0,\cdot)\|^2_{L^2(0,1)}+\mathbb{E}\iint_Q\overline{\theta}^2\overline{\gamma}^7|p|^2  \,dx\,dt\\
&+\mathbb{E}\iint_Q\overline{\theta}^2\overline{\gamma}^5 |p_x|^2  \,dx\,dt+\mathbb{E}\iint_Q\overline{\theta}^2\overline{\gamma}^3 |p_{xx}|^2  \,dx\,dt\\
&\leq C \bigg[\lambda^7\mathbb{E}\iint_Q\theta^2\gamma^7 |p|^2\,dx\,dt+\lambda^5\mathbb{E}\iint_Q \theta^2\gamma^5|p_x|^2\,dx\,dt\\
&\hspace{1cm}+\lambda^3\mathbb{E}\iint_Q \theta^2\gamma^3|p_{xx}|^2\,dx\,dt+  \mathbb{E}\int_{0}^{\frac{3T}{4}}\int_0^1 (|q|^2+|q_{x}|^2+|q_{xx}|^2)\,dx\,dt\bigg].
\end{aligned}
\end{align}
Using the system satisfied by \(q\) in \eqref{ADJSO1}, we readily deduce that
\begin{align}\label{Ineq2}
\mathbb{E}\int_{0}^{\frac{3T}{4}}\int_0^1 (|q|^2+|q_{x}|^2+|q_{xx}|^2) \,dx\,dt \leq \bigg(\frac{C}{\beta^2}+\frac{C}{\delta_1^2}\bigg)\,\mathbb{E}\iint_Q\overline{\theta}^2\overline{\gamma}^7|p|^2  \,dx\,dt+\frac{C}{\delta_2^2} \,\mathbb{E}\iint_Q |P|^2  \,dx\,dt.
\end{align}
Combining \eqref{Ineq1} and \eqref{Ineq2}, and choosing $\beta$ and $\delta_1$ sufficiently large, we obtain
\begin{align*}
\mathbb{E}\|p(0,\cdot)\|^2_{L^2(0,1)}+\overline{\mathcal{I}}(p)\leq C \,\mathcal{I}(7,p)+C\,\mathbb{E}\iint_Q |P|^2  \,dx\,dt.
\end{align*}
Using Lemma \ref{thmm5.1} with fixed \(\mu=\mu_3\) and \(\lambda=\lambda_3\), it follows that
\begin{align}\label{estimforOI}
\begin{aligned}
\mathbb{E}\|p(0,\cdot)\|^2_{L^2(0,1)}+\overline{\mathcal{I}}(p)\leq\,C \, \mathbb{E}\iint_Q \big(|p|^2 \chi_{\mathcal{O}}+ |P|^2\big)\,dx\,dt.
\end{aligned}
\end{align}
\textbf{Step 2.}  We choose the weight function \(\rho = \rho(t) = e^{\lambda \overline{\alpha}^{\star}(t)}\), where \(\overline{\alpha}^{\star}(t) = \max_{x \in [0,1]} \overline{\alpha}(t,x)\). Notice that
\begin{align}\label{ito1}
\begin{aligned}
d(\rho^{-2} |q|^2)= -2\rho'\rho^{-3} |q|^2dt+ \rho^{-2} \bigg[&2q \bigg\{\bigg(-k\,q_{xx}- q_{xxx}-\eta q_{xxxx}+aq+\frac{1}{\beta}p\chi_{\mathcal{D}}-\frac{1}{\delta_1}p\bigg) dt \\
&\qquad+\bigg(bq-\frac{1}{\delta_2}P\bigg) dW(t)\bigg\} +\bigg(bq-\frac{1}{\delta_2}P\bigg)^2 dt\bigg].
\end{aligned}
\end{align}
Fix \(t \in (0,T)\). Integrating \eqref{ito1} over \((0,t)\times(0,1)\),  we obtain
\begin{align}\label{inessyoine}
\begin{aligned}
\mathbb{E}\int_0^t\int_0^1 d(\rho^{-2} |q|^2)\, dx=&-2\mathbb{E}\int_0^t\int_0^1\rho'\rho^{-3} |q|^2\, dx \,ds-2k\mathbb{E}\int_0^t\int_0^1 \rho^{-2} qq_{xx} \,dx \,ds\\
&-2\mathbb{E}\int_0^t\int_0^1 \rho^{-2} qq_{xxx} \,dx \,ds- 2\eta\mathbb{E}\int_0^t\int_0^1 \rho^{-2} |q_{xx}|^2 \,dx \,ds\\
&+2\mathbb{E}\int_0^t\int_0^1 a\rho^{-2} |q|^2 \,dx \,ds -\frac{2}{\delta_1}\mathbb{E}\int_0^t\int_0^1 \rho^{-2}pq \,dx  \,ds\\
&  +\frac{2}{\beta}\mathbb{E}\int_0^t\int_0^1 \rho^{-2}pq\chi_{\mathcal{D}} \,dx \,ds+\mathbb{E}\int_0^t\int_0^1 \rho^{-2} \bigg(bq-\frac{1}{\delta_2}P\bigg)^2 \,dx \,ds.
\end{aligned}
\end{align}
Noting that the third term on the right-hand side of \eqref{inessyoine} vanishes by integration by parts, and observing that $2 q_x q_{xx} = (|q_x|^2)_x$. Moreover, the second integral contains a term involving \(q_{xx}\), which can be absorbed by the fourth term. Therefore, we conclude that
\begin{align*}
&\mathbb{E}\int_0^1 \rho^{-2}(t) |q(t)|^2\, dx+\mathbb{E}\int_0^t\int_0^1 \rho^{-2} |q_{xx}|^2\, dx \,ds\\
&\quad\leq C\bigg[\mathbb{E}\int_0^t\int_0^1 \rho^{-2} |q|^2\, dx \,ds+ \mathbb{E}\iint_Q\rho^{-2} |p|^2\, dx \,dt+ \mathbb{E}\iint_Q\rho^{-2} |P|^2\, dx \,dt\bigg].
\end{align*}
Since \(q=q(t,\cdot) \in H^2_0(0,1)\), by the Poincaré inequality and Gr\"onwall's lemma, we deduce that
\begin{align}\label{ineqwithrho}
\mathbb{E}\iint_Q \rho^{-2} (|q|^2+|q_{x}|^2+|q_{xx}|^2)\, dx \,dt\leq C\,\bigg[\mathbb{E}\iint_Q\rho^{-2} |p|^2\, dx \,dt+ \mathbb{E}\iint_Q\rho^{-2} |P|^2\, dx \,dt\bigg].
\end{align}
Observing that \(\rho^{-2} \le \overline{\theta}^2\), it then follows from \eqref{ineqwithrho} and \eqref{rec1} that
\begin{align}\label{esttibar}
\mathbb{E}\iint_Q \rho^{-2} (|q|^2+|q_{x}|^2+|q_{xx}|^2)\, dx \,dt 
\le C \, \bigg[\overline{\mathcal{I}}(p)+ \mathbb{E}\iint_Q |P|^2\, dx \,dt\bigg].
\end{align}
Finally, by combining \eqref{esttibar} and \eqref{estimforOI}, we obtain the desired observability inequality \eqref{observaineq}.
\end{proof} 
We now establish the null controllability of the optimality system \eqref{eqq4.7} and, in doing so, complete the proof of Theorem \ref{th4.1SN}.
\begin{prop}\label{Lm5.5}
Assume that  \eqref{Assump10} holds, that \(\beta,\delta_1,\delta_2>0\) are sufficiently large, and let \(\rho = \rho(t)\) be the weight function given in Proposition \ref{Pro4.2}. Then, for any target function 
\(y_d^i \in L^2_{\mathcal{F}}(0,T;L^2(\mathcal{O}_d^i))\) ($i=0,1,2$) satisfying \eqref{inqAss11SN} and any initial condition 
\(y_0 \in L^2_{\mathcal{F}_0}(\Omega;L^2(0,1))\), there exist controls 
\((\widehat{f},\widehat{g}) \in L^2_{\mathcal{F}}(0,T;L^2(\mathcal{O})) \times L^2_{\mathcal{F}}(0,T;L^2(0,1))\) 
minimizing \(\mathcal{J}\) such that the corresponding solution \((\widehat{y};\widehat{z},\widehat{Z})\) of system \eqref{eqq4.7} satisfies
$$\widehat{y}(T,\cdot) =0\;\;\textnormal{in}\;\;(0,1),\quad\textnormal{a.s.}$$
Moreover, the controls \((\widehat{f},\widehat{g})\) can be chosen so that
\begin{align}\label{estiforcon}
\begin{aligned}
&\|\widehat{f}\|^2_{L^2_{\mathcal{F}}(0,T;L^2(\mathcal{O}))} 
+ \|\widehat{g}\|^2_{L^2_{\mathcal{F}}(0,T;L^2(0,1))} \\
&\;\le C_T \bigg[ \mathbb{E}\|y_0\|^2_{L^2(0,1)} 
+ \mathbb{E}\iint_Q \rho^2 \Big(\big|y_d^0\big|^2\chi_{\mathcal{O}_d^0}+\big|y_d^1\big|^2\chi_{\mathcal{O}_d^1}+\big|y_d^2\big|^2\chi_{\mathcal{O}_d^2}\Big) \,dx\,dt \bigg],
\end{aligned}
\end{align}
where \(C_T > 0\) is the same constant as in \eqref{observaineq}.
\end{prop}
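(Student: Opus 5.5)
We obtain the controls by the standard penalized duality (HUM-type) argument built on the observability inequality \eqref{observaineq} of Proposition \ref{Pro4.2}. For $\varepsilon>0$ we consider the quadratic functional on $p_T\in L^2_{\mathcal{F}_T}(\Omega;L^2(0,1))$
\begin{align*}
\mathcal{K}_\varepsilon(p_T)=\frac12\,\mathbb{E}\iint_Q\big(|p|^2\chi_{\mathcal{O}}+|P|^2\big)\,dx\,dt+\frac{\varepsilon}{2}\,\mathbb{E}\|p_T\|^2_{L^2(0,1)}+\mathbb{E}\int_0^1 y_0\,p(0)\,dx-\sum_{i=0}^2\mathbb{E}\iint_Q y_d^i\,\partial_x^i q\,\chi_{\mathcal{O}_d^i}\,dx\,dt,
\end{align*}
where $(p,P;q)$ solves \eqref{ADJSO1} with terminal datum $p_T$. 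The sign conventions in the last two terms will be fixed by the duality identity described below. The functional is convex and continuous. It is also coercive, and even uniformly so in $\varepsilon$ on the relevant quantities: by Cauchy--Schwarz,
\[
\Big|\mathbb{E}\iint_Q y_d^i\partial_x^iq\,\chi_{\mathcal{O}_d^i}\Big|\le\Big(\mathbb{E}\iint_Q\rho^2|y_d^i|^2\chi_{\mathcal{O}_d^i}\Big)^{1/2}\Big(\mathbb{E}\iint_Q\rho^{-2}|\partial_x^iq|^2\Big)^{1/2},
\]
and the observability inequality \eqref{observaineq} bounds both $\mathbb{E}\|p(0)\|^2$ and the $\rho^{-2}$-weighted norms of $q,q_x,q_{xx}$ by the observed quantity $\mathbb{E}\iint_Q(|p|^2\chi_{\mathcal{O}}+|P|^2)$. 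Hence $\mathcal{K}_\varepsilon$ has a unique minimizer $p_T^\varepsilon$.

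We then define the controls as
\[
f_\varepsilon=p^\varepsilon\chi_{\mathcal{O}},\qquad g_\varepsilon=P^\varepsilon .
\]
The Euler--Lagrange equation, combined with the duality identity, gives $y_\varepsilon(T)=-\varepsilon p_T^\varepsilon$. The duality identity is obtained by applying Itô's formula to $y\,p - z\,q$, where $(y;z,Z)$ solves \eqref{eqq4.7} and $(p,P;q)$ solves \eqref{ADJSO1}. The coupling terms cancel by construction: the $\frac1\beta z\chi_{\mathcal{D}}$ and $\frac1{\delta_1}z$ terms cancel against the corresponding $p$-terms in the $q$-equation; the $\frac1{\delta_2}ZP$ term cancels against the $-\frac1{\delta_2}P$ diffusion in $q$; and the $\partial_x^i(\partial_x^iy)$ terms cancel against $q,\,-q_{xx},\,q_{xxxx}$ after integrating by parts twice, using the clamped boundary conditions. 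What remains is
\[
\mathbb{E}\int_0^1 y(T)p_T\,dx=\mathbb{E}\int_0^1y_0p(0)\,dx+\mathbb{E}\iint_Q\big(fp\chi_{\mathcal{O}}+gP\big)\,dx\,dt+\sum_{i=0}^2\pm\,\mathbb{E}\iint_Q y_d^i\,\partial_x^iq\,\chi_{\mathcal{O}_d^i}\,dx\,dt.
\]
Checking this identity, and in particular the boundary terms from the third- and fourth-order parts and the $q_{xxxx}\chi_{\mathcal{O}_d^2}$ coupling, is the main technical point. It is routine but sign-sensitive, and it is where the regularity $y,q\in L^2_{\mathcal{F}}(0,T;H^2_0)$ enters.

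Next we test the optimality condition with $p_T^\varepsilon$ itself and use $\mathcal{K}_\varepsilon(p_T^\varepsilon)\le\mathcal{K}_\varepsilon(0)=0$ together with \eqref{observaineq} and Young's inequality. This yields the uniform bound
\[
\|f_\varepsilon\|^2+\|g_\varepsilon\|^2+\varepsilon\,\mathbb{E}\|p_T^\varepsilon\|^2\le C_T\Big[\mathbb{E}\|y_0\|^2+\sum_{i=0}^2\mathbb{E}\iint_Q\rho^2|y_d^i|^2\chi_{\mathcal{O}_d^i}\Big].
\]
Consequently $\mathbb{E}\|y_\varepsilon(T)\|^2=\varepsilon^2\mathbb{E}\|p_T^\varepsilon\|^2\le C\varepsilon\to0$.

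Finally we extract weakly convergent subsequences $f_\varepsilon\rightharpoonup\widehat f$ and $g_\varepsilon\rightharpoonup\widehat g$. Well-posedness of the linear coupled system \eqref{eqq4.7} gives weak convergence of the states, with the solution map viewed as a continuous affine map into $L^2_{\mathcal{F}}(\Omega;C([0,T];L^2))$, so that $\widehat y(T)=0$ a.s. Weak lower semicontinuity transfers the estimate \eqref{estiforcon}. This well-posedness is where largeness of $\beta,\delta_1,\delta_2$ is used: the forward--backward system is solvable for small coupling, for instance via a contraction or monotonicity argument. To obtain controls actually minimizing $\mathcal{J}$, we note that the set of admissible pairs $(f,g)$ driving $\widehat y(T)=0$ is a nonempty closed affine subspace, since the map is affine and continuous. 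The strictly convex, coercive functional $\mathcal{J}$ therefore attains a unique minimum on it, and its norm is bounded by that of the pair constructed above, so \eqref{estiforcon} still holds.
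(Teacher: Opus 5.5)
Your proposal is correct and is essentially the paper's argument in dual form. The paper minimizes the penalized primal functional $\mathcal{J}_\varepsilon(f,g)$ over controls and characterizes the minimizer as $(f^\varepsilon,g^\varepsilon)=(p^\varepsilon\chi_{\mathcal{O}},P^\varepsilon)$ with $p^\varepsilon(T)=-\frac{1}{\varepsilon}y^\varepsilon(T)$. You instead minimize its Fenchel dual $\mathcal{K}_\varepsilon$ over terminal data. This gives the same $\varepsilon$-controls, the same relation $y_\varepsilon(T)=-\varepsilon p_T^\varepsilon$, and the same uniform bound, obtained from the duality identity, Young's inequality and \eqref{observaineq}. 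The paper only asserts two points that you justify explicitly: minimality of $\mathcal{J}$, via projection onto the closed affine set of null controls, and the fact that large $\beta,\delta_1,\delta_2$ is needed for well-posedness of the forward--backward systems.

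One correction: the duality identity comes from It\^o's formula applied to $yp+zq$, not $yp-zq$. With the signs in \eqref{eqq4.7} and \eqref{ADJSO1}, after integrating in $x$, the $dt$-part of $d(yp)$ contains
$fp\chi_{\mathcal{O}}+gP-\frac{1}{\beta}pz\chi_{\mathcal{D}}+\frac{1}{\delta_1}pz+\frac{1}{\delta_2}ZP+\sum_{i}\partial_x^iy\,\partial_x^iq\,\chi_{\mathcal{O}_d^i}$.
The $dt$-part of $d(zq)$ contains exactly the negatives of the last four terms, plus $\sum_i y_d^i\,\partial_x^iq\,\chi_{\mathcal{O}_d^i}$. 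So the difference doubles the coupling terms instead of cancelling them.

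Using the sum, together with $z(T)=0$ and $q(0)=0$, gives your displayed identity with a plus sign on all three $y_d^i$ terms. Accordingly, $\mathcal{K}_\varepsilon$ should carry $+\sum_i\mathbb{E}\iint_Q y_d^i\,\partial_x^iq\,\chi_{\mathcal{O}_d^i}\,dx\,dt$.

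Also note that the cancellation of the $y$--$q$ coupling requires reading $-q_{xx}\chi_{\mathcal{O}_d^1}$ and $q_{xxxx}\chi_{\mathcal{O}_d^2}$ in divergence form. That is, they must be understood as $-\partial_x(\chi_{\mathcal{O}_d^1}q_x)$ and $\partial_x^2(\chi_{\mathcal{O}_d^2}q_{xx})$ in $H^{-2}(0,1)$.
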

\begin{proof}
Let \(y_0 \in L^2_{\mathcal{F}_0}(\Omega; L^2(0,1))\) and \(\varepsilon > 0\). We consider the following optimal control problem:
\begin{align}\label{05.1}
\inf \Big\{ \mathcal{J}_\varepsilon(f,g) \,\big|\, (f,g) \in L^2_{\mathcal{F}}(0,T;L^2(\mathcal{O})) \times L^2_{\mathcal{F}}(0,T;L^2(0,1)) \Big\},
\end{align}
where
\[
\mathcal{J}_\varepsilon(f,g) = \frac{1}{2} \, \mathbb{E} \int_0^T \int_{\mathcal{O}} |f|^2 \, dx \, dt 
+ \frac{1}{2} \, \mathbb{E} \iint_Q |g|^2 \, dx \, dt 
+ \frac{1}{2\varepsilon} \, \mathbb{E} \int_0^1 |y(T,x)|^2 \, dx,
\]
and \((y; z, Z)\) is the solution of \eqref{eqq4.7} corresponding to the initial state \(y_0\) and controls \(f\) and \(g\). 

It is readily seen that \(\mathcal{J}_\varepsilon\) is well-defined, continuous, strictly convex, and coercive. Therefore, the problem \eqref{05.1} admits a unique solution 
\[
(f^\varepsilon, g^\varepsilon) \in L^2_{\mathcal{F}}(0,T; L^2(\mathcal{O})) \times L^2_{\mathcal{F}}(0,T; L^2(0,1)).
\]

By standard duality arguments, the optimal control pair \((f^\varepsilon, g^\varepsilon)\) can be characterized as
\begin{align}\label{05.2}
(f^\varepsilon,g^\varepsilon) = (p^\varepsilon \, \chi_{\mathcal{O}},P^\varepsilon) \quad \textnormal{in } Q,\ \textnormal{a.s.},
\end{align}
where \((p^\varepsilon, P^\varepsilon; q^\varepsilon)\) is the solution of the following backward–forward stochastic KS--KdV system:
\begin{equation*}
\begin{cases}
\begin{array}{lll}
dp^\varepsilon -(k\,p^\varepsilon_{xx}- p^\varepsilon_{xxx}+\eta\,p^\varepsilon_{xxxx})\,dt
= \big[-ap^\varepsilon-bP^\varepsilon+ q^\varepsilon\chi_{\mathcal{O}_d^0}-q_{xx}^\varepsilon\chi_{\mathcal{O}_d^1}+q_{xxxx}^\varepsilon\chi_{\mathcal{O}_d^2}\big] dt
+ P^\varepsilon\,dW(t),
& (t,x)\in Q,\\[2mm]
dq^\varepsilon +(k\,q^\varepsilon_{xx}+ q^\varepsilon_{xxx}+\eta\,q^\varepsilon_{xxxx})\,dt
= \left[aq+\frac{1}{\beta} p^\varepsilon\chi_{\mathcal{D}}-\frac{1}{\delta_1} p^\varepsilon\right] dt+ \big[bq^\varepsilon-\frac{1}{\delta_2} P^\varepsilon\big]\,dW(t),
& (t,x)\in Q,\\[2mm]
p^\varepsilon(t,0)=p^\varepsilon(t,1)=0, & t\in(0,T),\\
p^\varepsilon_x(t,0)=p^\varepsilon_x(t,1)=0, & t\in(0,T),\\
q^\varepsilon(t,0)=q^\varepsilon(t,1)=0, & t\in(0,T),\\
q^\varepsilon_x(t,0)=q^\varepsilon_x(t,1)=0, & t\in(0,T),\\
p^\varepsilon(T,x)=-\frac{1}{\varepsilon}y^\varepsilon(T,x), & x\in(0,1),\\
q^\varepsilon(0,x)=0, & x\in(0,1),
\end{array}
\end{cases}
\end{equation*}
with  $(y^\varepsilon;z^\varepsilon,Z^\varepsilon)$ is the solution of \eqref{eqq4.7} associated to the controls $f^\varepsilon$ and $g^\varepsilon$.

Applying Itô's formula to the terms $y^\varepsilon p^\varepsilon$ and  $z^\varepsilon q^\varepsilon$, integrating by parts and applying Fubini's theorem, we deduce that
\begin{align}\label{dual}
\begin{aligned}
-\frac{1}{\varepsilon}\mathbb{E}\|y^\varepsilon(T,\cdot)\|^2_{L^2(0,1)}
-\mathbb{E}\left\langle y_0,p^\varepsilon(0,\cdot)\right\rangle_{L^2(0,1)}
=&\,\mathbb{E}\int_0^T\int_{\mathcal{O}} f^\varepsilon p^\varepsilon \, dx \, dt
+\mathbb{E}\iint_{Q} g^\varepsilon P^\varepsilon \, dx \, dt \\
&+\mathbb{E}\int_0^T\int_{\mathcal{O}_d^0} y_d^0 q^\varepsilon \, dx \, dt+\mathbb{E}\int_0^T\int_{\mathcal{O}_d^1} y_d^1 q_x^\varepsilon \, dx \, dt\\
&+\mathbb{E}\int_0^T\int_{\mathcal{O}_d^2} y_d^2 q_{xx}^\varepsilon \, dx \, dt.
\end{aligned}
\end{align}
Recalling \eqref{05.2}, it follows from \eqref{dual} that
\begin{align*}
\begin{aligned}
&\mathbb{E}\int_0^T\int_{\mathcal{O}} |p^\varepsilon|^2  \, dx \,dt +\mathbb{E}\iint_{Q} |P^\varepsilon|^2  \, dx \,dt+\frac{1}{\varepsilon}\mathbb{E}\int_0^1|y^\varepsilon(T,x)|^2 dx\\
&=-\mathbb{E}\left\langle y_0,p^\varepsilon (0,\cdot)\right\rangle_{L^2(0,1)}-\mathbb{E}\int_0^T\int_{\mathcal{O}_d^0} y_d^0 q^\varepsilon \, dx \,dt-\mathbb{E}\int_0^T\int_{\mathcal{O}_d^1} y_d^1 q_x^\varepsilon \, dx \, dt\\
&\quad-\mathbb{E}\int_0^T\int_{\mathcal{O}_d^2} y_d^2 q_{xx}^\varepsilon \, dx \, dt.
\end{aligned}
\end{align*}
This yields
\begin{align}\label{05.5}
\begin{aligned}
&\, \mathbb{E}\int_0^T\int_{\mathcal{O}} |p^\varepsilon|^2  \, dx \,dt +\mathbb{E}\iint_{Q} |P^\varepsilon|^2  \, dx \,dt+\frac{1}{\varepsilon}\mathbb{E}\int_0^1|y^\varepsilon(T,x)|^2 dx \\
&\leq \frac{C_T}{2} \bigg[\mathbb{E}\| y_0 \|^2_{L^2(0,1)}+\mathbb{E}\iint_Q \rho^2 \Big(\big|y_d^0\big|^2\chi_{\mathcal{O}_d^0}+\big|y_d^1\big|^2\chi_{\mathcal{O}_d^1}+\big|y_d^2\big|^2\chi_{\mathcal{O}_d^2}\Big) \,dx\,dt\bigg] \\
&\qquad+ \frac{1}{2C_T} \bigg[\mathbb{E}\| p^\varepsilon(0,\cdot) \|^2_{L^2(0,1)}+\mathbb{E}\iint_Q \rho^{-2} (|q^\varepsilon|^2+|q_x^\varepsilon|^2+|q_{xx}^\varepsilon|^2) \,dx\,dt\bigg],
\end{aligned}
\end{align}
where $C_T$ is the observability constant appearing in \eqref{observaineq}.

Combining \eqref{05.5} and \eqref{observaineq}, and using \eqref{05.2}, we deduce that
\begin{align}\label{05.6}
\begin{aligned}
&\|f^\varepsilon\|^2_{L^2_\mathcal{F}(0,T;L^2(\mathcal{O}))}
+\|g^\varepsilon\|^2_{L^2_\mathcal{F}(0,T;L^2(0,1))}
+\frac{2}{\varepsilon}\mathbb{E}\int_0^1 |y^\varepsilon(T,x)|^2\,dx  \\
&\le C_T\bigg[\mathbb{E}\|y_0\|^2_{L^2(0,1)}
+\mathbb{E}\iint_Q \rho^2 \Big(\big|y_d^0\big|^2\chi_{\mathcal{O}_d^0}+\big|y_d^1\big|^2\chi_{\mathcal{O}_d^1}+\big|y_d^2\big|^2\chi_{\mathcal{O}_d^2}\Big) \,dx\,dt\bigg].
\end{aligned}
\end{align}
It follows from \eqref{05.6} that there exist subsequences of $(f^\varepsilon)$ and $(g^\varepsilon)$, still denoted by $(f^\varepsilon)$ and $(g^\varepsilon)$, such that as $\varepsilon\to0$,
\begin{align}\label{05.7}
\begin{aligned}
&f^\varepsilon \rightharpoonup \widehat{f}
\quad \text{in } L^2((0,T)\times\Omega;L^2(\mathcal{O})), \\
&g^\varepsilon \rightharpoonup \widehat{g}
\quad \text{in } L^2((0,T)\times\Omega;L^2(0,1)).
\end{aligned}
\end{align}
Let $(\widehat{y};\widehat{z},\widehat{Z})$ be the solution of \eqref{eqq4.7} associated with $y_0$, $\widehat{f}$, and $\widehat{g}$. Then, in view of \eqref{05.6} and \eqref{05.7}, we obtain
\begin{align*}
&\|\widehat{f}\|^2_{L^2_\mathcal{F}(0,T;L^2(\mathcal{O}))}
+\|\widehat{g}\|^2_{L^2_\mathcal{F}(0,T;L^2(0,1))}\\
&\;\le
C_T\bigg[\mathbb{E}\|y_0\|^2_{L^2(0,1)}
+\mathbb{E}\iint_Q \rho^2 \Big(\big|y_d^0\big|^2\chi_{\mathcal{O}_d^0}+\big|y_d^1\big|^2\chi_{\mathcal{O}_d^1}+\big|y_d^2\big|^2\chi_{\mathcal{O}_d^2}\Big) \,dx\,dt\bigg].
\end{align*}
Moreover, it is not difficult to show that
\[
\widehat{y}(T,\cdot)=0 \quad \text{in } (0,1), \quad \text{a.s.}
\]
Finally, one can also check that the control $(\widehat{f},\widehat{g})$ minimizes the functional $\mathcal{J}$ among all controls achieving null controllability. This completes the proof of Proposition \ref{Lm5.5} and hence of Theorem \ref{th4.1SN}.
\end{proof}


\begin{thebibliography}{10}
\bibitem{albarasan23}
I. C. Albuquerque, F. D. Araruna, M. C. Santos. 
\newblock On a multi-objective control problem for the Korteweg–de Vries equation. 
\newblock{\em Calculus of Variations and Partial Differential Equations}, 62 (2023), 131.


\bibitem{ArFeGu17}  	
F. D. Araruna, E. Fernández-Cara, S. Guerrero and M. C. Santos.   
\newblock New results on the Stackelberg-Nash exact control of linear parabolic equations. 
\newblock{\em Systems \& Control Letters}, 104 (2017), 78--85.


\bibitem{ArCaSa15}  
F. D. Araruna, E. Fernandez-Cara and M. C. Santos.
\newblock  Stackelberg-Nash exact controllability for linear and semilinear parabolic equations.   
\newblock{\em ESAIM: Control, Optimisation and Calculus of Variations}, 21 (2015), 835--856.


\bibitem{benny66}
D. Benney. 
\newblock Long waves on liquid films. \newblock{\em Journal of Mathematics and Physics}, 45 (1966), 150--155.


\bibitem{bewl12}
T. Bewley, P. Moin and R. Temam. \newblock Optimal and robust approaches for linear and nonlinear regulation problems in fluid mechanics. 
\newblock{\em In 4th Shear Flow Control Conference}, (1997), p. 1872.


\bibitem{bewl12se}
T. R. Bewley, R. Temam and M. Ziane. \newblock  A general framework for robust control in fluid mechanics. \newblock{\em Physica D: Nonlinear Phenomena}, 138 (2000), 360--392.

\bibitem{bewl1}
T. Bewley, R. Temam and M. Ziane. \newblock Existence and uniqueness of optimal control to the Navier–Stokes equations. 
\newblock{\em Comptes Rendus de l'Académie des Sciences-Series I-Mathematics}, 330 (2000), 1007--1011.


 
\bibitem{BoMaOuNash} 
I. Boutaayamou, L. Maniar and O.  Oukdach.
\newblock Stackelberg-Nash null controllability of heat equation with general dynamic boundary conditions.  \newblock{\em Evolution Equations and Control Theory}, 11 (2022), 1285--1307.

\bibitem{bous77}
J. Boussinesq.
\newblock Essai sur la théorie des eaux courantes, \newblock{\em  Mémoires présentés par divers savants à l’Académie des Sciences de l’Institute National de France}, 23 (1877), 1–680.

\bibitem{refee14}
L. Breton and C. Montoya. 
\newblock Robust Stackelberg controllability for the Kuramoto–Sivashinsky equation. 
\newblock{\em Mathematics of Control, Signals, and Systems}, 34 (2022), 515--558.

\bibitem{Calsavara} 
B. M. Calsavara, E. Fernández-Cara, L. De Teresa and J. A. Villa.  
\newblock New results concerning the hierarchical control of linear and semilinear parabolic equations.
\newblock{\em ESAIM: Control, Optimisation and Calculus of Variations},  28 (2022), 14.






\bibitem{StackKS}
N. Carreno and M. C. Santos. \newblock Stackelberg-Nash exact controllability for the Kuramoto--Sivashinsky equation with boundary and distributed controls. 
\newblock{\em Journal of Differential Equations}, 343 (2023), 1--63.
 


\bibitem{KSStack2} 
N. Carreno and M. C.  Santos. \newblock Stackelberg–Nash exact controllability for the Kuramoto–Sivashinsky equation. 
\newblock{\em Journal of Differential Equations}, 266 (2019), 6068-6108.

\bibitem{KS23e} 
E. Cerpa. 
\newblock Control of a Korteweg-de Vries equation: a tutorial. 
\newblock{\em Mathematical Control and Related Fields}, 4 (2014), 45--99.

\bibitem{KS2}  	
E. Cerpa and A. Mercado. 
\newblock Local exact controllability to the trajectories of the 1-D Kuramoto–Sivashinsky equation. \newblock{\em Journal of Differential Equations}, 250 (2011), 2024--2044.


\bibitem{KS3}  	
E. Cerpa, A. Mercado and A. F. Pazoto. 
\newblock Null controllability of the stabilized Kuramoto--Sivashinsky system with one distributed control. \newblock{\em SIAM Journal on Control and Optimization}, 53 (2015), 1543--1568.





\bibitem{duanervin}
J. Duan and V. J. Ervin. 
\newblock On the stochastic Kuramoto–Sivashinsky equation. 
\newblock{\em Nonlinear Analysis}, 44 (2001), 205–216.




\bibitem{Nashstoch3}
A. Elgrou, L. Maniar and O. Oukdach. \newblock Inverse initial problem under Nash strategy for stochastic reaction-diffusion equations with dynamic boundary conditions. 
\newblock{\em Journal of Inverse and Ill-posed Problems}, 33 (2025), 529--546.

\bibitem{Nashstoch4}
A. Elgrou and O. Oukdach. 
\newblock Stackelberg-Nash controllability for abstract stochastic evolution equations and applications. 
\newblock{\em Journal of Dynamical and Control Systems}, 32 (2026), 7.


\bibitem{eketm99}
I. Ekeland and R. Temam. 
Convex analysis and variational problems. Society for Industrial and Applied Mathematics, 1999.


\bibitem{malfengkaw00133}
B.-F. Feng, B. A. Malomed and T. Kawahara. 
\newblock Stable periodic waves in coupled Kuramoto-Sivashinsky-Korteweg-de Vries equations. 
\newblock{\em Journal of the Physical Society of Japan}, 71 (2002), 2700--2707.




\bibitem{BFurIman} 
A. Y. Fursikov and O.Y. Imanuvilov.
\newblock Controllability of evolution equations.
Lecture Notes Series, Research Institute of Mathematics, Seoul National University, Korea, 34 (1996), 163.




\bibitem{Gaochenli15}
P. Gao, M. Chen and Y. Li. 
\newblock Observability estimates and null controllability for forward and backward linear stochastic Kuramoto--Sivashinsky equations. 
\newblock{\em SIAM Journal on Control and Optimization}, 53 (2015), 475--500.


 
\bibitem{GRP02} 
R. Glowinski, A. M. Ramos and J.  Periaux. 
\newblock Nash equilibria for the multi-objective control of linear partial differential equations. 
\newblock{\em Journal of Optimization Theory and Applications}, 112 (2002), 457--498.



\bibitem{korVries895}
D. J. Korteweg and G. de Vries. 
\newblock On the change of form of long waves advancing in a rectangular canal and on a new type of long stationary waves.
\newblock{\em Philosophical Magazine}, 39 (1895), 422–443.


\bibitem{kur78}
Y. Kuramoto. 
\newblock Diffusion-induced chaos in reaction systems. 
\newblock{\em Progress of Theoretical Physics Supplement}, 64 (1978), 346--367.


\bibitem{kur75tuz}
Y. Kuramoto and T. Tsuzuki. 
\newblock On the formation of dissipative structures in reaction-diffusion systems: Reductive perturbation approach. 
\newblock{\em Progress of Theoretical Physics}, 54 (1975), 687--699.

\bibitem{kur76tuzu}
Y. Kuramoto and T. Tsuzuki. 
\newblock Persistent propagation of concentration waves in dissipative media far from thermal equilibrium. \newblock{\em Progress of Theoretical Physics}, 55 (1976), 356--369.


\bibitem{Liangwu22}
S. Liang and K. N. Wu.
\newblock Boundary control of stochastic Korteweg-de Vries-Burgers equations. \newblock{\em Nonlinear Dynamics}, 108 (2022), 4093--4102.


\bibitem{LiPa} 
 J.-L. Lions.
\newblock Hierarchic control.
 \newblock{\em  Proceedings of the Indian National Science Academy}, 
 104 (1994), 295--304.

\bibitem{lu2011some}
Q. L{\"u}.
\newblock Some results on the controllability of forward stochastic heat equations with control on the drift.
\newblock{\em Journal of Functional Analysis}, 260 (2011), 832--851.

\bibitem{qiluwang22}
Q. Lü and Y. Wang.
\newblock Null controllability for fourth order stochastic parabolic equations.  
\newblock{\em SIAM Journal on Control and Optimization}, 60
(2022), 1563--1590.


\bibitem{luliu25}
Y. Lu and L. Liu. 
\newblock Insensitizing controls for stochastic Kuramoto–Sivashinsky equation. 
\newblock{\em Mathematical Methods in the Applied Sciences}, (2025).


\bibitem{StDf}
G. Leitmann. 
\newblock On generalized Stackelberg strategies. 
\newblock{\em Journal of Optimization Theory and Applications}, 26 (1978), 637--643.

\bibitem{laucuma96}
K. B. Lauritsen, R. Cuerno and H. A. Makse. 
\newblock Noisy Kuramoto--Sivashinsky equation for an erosion model. \newblock{\em Physical Review E}, 54 (1996), 3577.




\bibitem{Ssivan4}
D. M. Michelson and G. I. Sivashinsky.
\newblock Nonlinear analysis of hydrodynamic instability in laminar flames—II. Numerical experiments. \newblock{\em Acta Astronautica}, 4 (1977), 1207--1221.

\bibitem{refee12}
C. Montoya and L. De Teresa. 
\newblock Robust Stackelberg controllability for the Navier–Stokes equations. 
\newblock{\em Nonlinear Differential Equations and Applications NoDEA}, 25 (2018), 46.


\bibitem{Na51} 
J. Nash.
\newblock Non-cooperative games. 
 \newblock{\em Annals of Mathematics}, 54 (1951), 286--295.




\bibitem{unbounded}
L. L. D. Njoukoué and G. Deugoué. \newblock Stackelberg control in an unbounded domain for a parabolic equation. 
\newblock{\em Journal of Nonlinear Evolution Equations and Applications}, 2021 (2021), 95--118.




\bibitem{Nashstoch1}
O. Oukdach, S. Boulite, A. Elgrou and L. Maniar. 
\newblock Multi-objective control for stochastic parabolic equations with dynamic boundary conditions. 
\newblock{\em Journal of Optimization Theory and Applications}, 204 (2025), 55.

\bibitem{Nashstoch2}
O. Oukdach, S. Boulite, A. Elgrou and L. Maniar. 
\newblock Stackelberg–Nash Null Controllability for Stochastic Parabolic Equations. 
\newblock{\em Mathematical Methods in the Applied Sciences}, 48 (2025), 13164-13176.















\bibitem{BoMaOuNash2}
O. Oukdach, I. Boutaayamou and L.  Maniar.
\newblock Hierarchical control problem for the heat equation with dynamic boundary conditions. 
\newblock{\em IMA Journal of Mathematical Control and Information}, 41 (2024), 255--274.





\bibitem{Pa96} 
V. Pareto. 
\newblock Cours d'économie politique.  Librairie Droz, 1964.


\bibitem{hevicto1}
V. H. Santamaria and L. Peralta. \newblock Controllability results for stochastic coupled systems of fourth-and second-order parabolic equations. \newblock{\em Journal of Evolution Equations}, 22 (2022), 23.

\bibitem{hevicto12}
V. H. Santamaria, K. Le Balc’h and L. Peralta. 
\newblock Statistical null-controllability of stochastic nonlinear parabolic equations. \newblock{\em Stochastics and Partial Differential Equations: Analysis and Computations}, 10 (2022), 190--222.

\bibitem{refee13}
V. H. Santamaria and L. Peralta.
\newblock Some remarks on the Robust Stackelberg controllability for the heat equation with controls on the boundary.
\newblock{\em Discrete and Continuous Dynamical Systems Series B}, 25 (2020), 161--190.

 \bibitem{HSP18} 
V. H. Santamaria and L. De Teresa.
\newblock Some remarks on the hierarchic control for coupled parabolic PDEs, In Recent Advances in PDEs: Analysis, Numerics and Control. Springer, Cham. 17 (2018), 117--137.



 




\bibitem{refee11}
V. H. Santamaria and L. De Teresa.
\newblock Robust Stackelberg controllability for linear and semilinear heat equations. 
\newblock{\em Evolution Equations and Control Theory}, 7 (2018), 247--273.


\bibitem{Tangzhang9}
S. Tang and X. Zhang. 
\newblock Null controllability for forward and backward stochastic parabolic equations. 
\newblock{\em SIAM Journal on Control and Optimization}, 48 (2009), 2191--2216.

 
\bibitem{St34} 
H. V. Stackelberg. 
\newblock Marktform und Gleichgewicht. Springer. 1934.
 


\bibitem{yuzhang23}
Y. Yu and J. F. Zhang. 
\newblock Two multiobjective problems for stochastic degenerate parabolic equations. 
\newblock{\em SIAM Journal on Control and Optimization}, 61 (2023), 2708--2735.

\bibitem{ZhangGaoyua}
S. Zhang, H. Gao and G. Yuan. 
\newblock New global Carleman estimates and null controllability for a stochastic Cahn-Hilliard type equation. 
\newblock{\em Journal of Differential Equations}, 430 (2025), 113203.

\bibitem{zhouZ12}
Z. Zhou. 
\newblock Observability estimate and null controllability for one-dimensional fourth order parabolic equation. 
\newblock{\em Taiwanese Journal of Mathematics}, 16 (2012), 1991--2017.

\end{thebibliography}
\end{document}